\documentclass[a4paper,11pt,leqno]{amsart}

\usepackage{a4wide,enumerate}
\usepackage{amsmath,amssymb}
\usepackage{hyperref,delarray}
\usepackage[english]{babel}
\usepackage[english,linesnumbered]{algorithm2e}
\usepackage[utf8]{inputenc}
\usepackage{srcltx}
\usepackage{color,booktabs,float}
\usepackage{graphics}

\numberwithin{equation}{section}
\newtheorem{theo}{Theorem}[section]
\newtheorem{prop}[theo]{Proposition}
\newtheorem{coro}[theo]{Corollary}

\newtheorem{defi}[theo]{Definition}
\theoremstyle{remark}
\newtheorem*{rem*}{Remark}
\newtheorem{rem}[theo]{Remark}
\newtheorem*{acknowledgements}{Acknowledgements}
\SetKwFunction{BDyDF}{BDyDF}
\SetKwFunction{Bound}{Bound}
\SetKwFunction{NDelta}{NDelta}
\SetKwFunction{OptimalT}{OptimalT}
\SetKwFunction{floor}{floor}
\SetKwFunction{GRHcheck}{GRHcheck}
\expandafter\ifx\csname SetAlgoLined\endcsname\relax\def\SetAlgoLined{\relax}\fi

\expandafter\def\csname HyPsd@CatcodeWarning\endcsname#1{\relax}

\newcommand{\case}{&\text{if }}
\newcommand{\Chi}{{\raise0.2ex\hbox{$\chi$}}}
\newcommand{\Cl}{\mathcal C\!\ell}
\renewcommand{\d}{\,\mathrm d}
\DeclareMathOperator{\dilog}{dilog}
\newcommand{\DK}{\Delta_\K}
\newcommand{\e}{\mathbf e}
\newcommand{\g}{\gamma}
\newcommand{\good}{$\K$-good}
\newcommand{\GRH}{\textrm{\upshape GRH}}
\newcommand{\IF}{{\mathcal I}}
\DeclareMathOperator{\Iint}{I}
\DeclareMathOperator{\Imm}{Im}
\DeclareMathOperator{\Jint}{J}
\newcommand{\K}{\mathbf K}
\newcommand{\LC}{\mathcal{L}}
\newcommand{\lDK}{\log\DK}
\newcommand{\lDKsq}{\log^2\DK}
\newcommand{\llDK}{\log\lDK}
\newcommand{\N}{\mathrm N}
\newcommand{\NK}{N_\K}
\newcommand{\nK}{n_\K}
\newcommand{\NM}{\mathbf N}
\newcommand{\nt}{\notag\\}
\newcommand{\OC}{\mathcal{O}}
\newcommand{\OK}{\OC_{\K}}
\newcommand{\pG}{\mathfrak p}
\newcommand{\Phie}{\Phi_\e}
\newcommand{\QM}{\mathbf Q}
\DeclareMathOperator{\Ree}{Re}
\newcommand{\RM}{\mathbf R}
\newcommand{\SC}{\mathcal{S}}

\newcommand{\TC}{T_{\mathcal{C}}}
\newcommand{\TCK}{\TC(\K)}
\newcommand{\Te}{T_\e}
\newcommand{\TeK}{\Te(\K)}
\newcommand{\WC}{\mathcal W}
\newcommand{\zK}{{\zeta_\K}}
\newcommand{\ZM}{\mathbf Z}
\newcommand{\MK}{{M_\K}}
\newcommand{\Ell}{\mathcal{L}}
\newcommand{\Ef}{\ell}

\newcounter{fixedfig}
\newenvironment{fixedfig}{
\refstepcounter{fixedfig}
\centerline\bgroup%
\def\caption##1{\textsc{Figure \arabic{fixedfig}}: ##1}%
\begin{tabular}{c}%
}
{%
\end{tabular}%
\egroup%
\pagebreak[1]
}

\allowdisplaybreaks[4]

\title{Explicit bounds for generators of the class group}

\author[L.~Grenié]{Loïc Grenié}
\address[L.~Grenié]{Dipartimento di Ingegneria gestionale,
        dell'informazione e della produzione\\
        Università degli Studi di Bergamo\\
        viale Marconi 5\\
        24044 Dalmine (BG)
        Italy}
\email{loic.grenie@gmail.com}

\author[G.~Molteni]{Giuseppe Molteni}
\address[G.~Molteni]{Dipartimento di Matematica\\
        Università di Milano\\
        via Saldini 50\\
        20133 Milano\\
        Italy}
\email{giuseppe.molteni1@unimi.it}

\subjclass[2010]{Primary 11R04; Secondary 11R29}

\begin{document}
\makeatletter%
\let\l@icbegin=\begin%
\def\begin{\src@spec\l@icbegin}%
\let\l@icend=\end%
\def\end#1{\l@icend{#1}\src@spec\tracingmacros0}%
\makeatother%
\bibliographystyle{amsalpha}
\begin{abstract}
Assuming Generalized Riemann's Hypothesis, Bach proved that the class group $\Cl_\K$ of a number field
$\K$ may be generated using prime ideals whose norm is bounded by $12\lDKsq$, and by $(4+o(1))\lDKsq$
asymptotically, where $\DK$ is the absolute value of the discriminant of $\K$. Under the same assumption,
Belabas, Diaz y Diaz and Friedman showed a way to determine a set of prime ideals that generates $\Cl_\K$
and which performs better than Bach's bound in computations, but which is asymptotically worse. In this
paper we show that $\Cl_\K$ is generated by prime ideals whose norm is bounded by the minimum of
$4.01\lDKsq$, $4\big(1+\big(2\pi e^{\g})^{-\nK}\big)^2\lDKsq$ and $4\big(\lDK+\llDK-(\g+\log
2\pi)\nK+1+(\nK+1)\frac{\log(7\lDK)}{\lDK}\big)^2$. Moreover, we prove explicit upper bounds for the size
of the set determined by Belabas, Diaz y Diaz and Friedman's algorithms, confirming that it has size
$\asymp (\lDK\llDK)^2$. In addition, we propose a different algorithm which produces a set of generators
which satisfies the above mentioned bounds and in explicit computations turns out to be smaller than
$\lDKsq$ except for $7$ out of the $31292$ fields we tested.
\end{abstract}

\maketitle
\section{Introduction}
Let $\K$ be a number field of degree $\nK\geq 2$, with $r_1$ (resp. $r_2$) real (resp. pair of complex)
embeddings and denote $\DK$ the absolute value of its discriminant. Throughout the paper $\pG$ will
always denote a non-zero prime ideal, $\rho$ a non-trivial zero of $\zK$, $\g$ the imaginary part of such
a $\rho$ and, since we are assuming Generalized Riemann's Hypothesis, $\rho=\frac{1}{2}+i\g$. The
Euler--Mascheroni constant will also be denoted $\g$, but the context will make it clear.

Buchmann's algorithm is an efficient method to compute both the class group $\Cl_\K$ and a basis for a
maximal lattice of the unity group of $\K$. However, it needs as input a set of generators for $\Cl_\K$.
Minkowski's bound shows that ideals having a norm (essentially) below $\sqrt\DK$ may be used, but it
works just for a few fields since the discriminant increases very quickly. Assuming Generalized
Riemann's Hypothesis, Eric Bach proved in~\cite{Bach:explicit} that ideals with a norm below $12\lDKsq$
suffice, and that the bound improves up to $(4+o(1))\lDKsq$ as $\DK$ diverges, where the function in
$o(1)$ is not made explicit in that paper, but has order at least $\log^{-2/3}\DK$.
%
%
This is a remarkable improvement, but for certain applications it is still too large. A different method
to find a good bound $T$ for norms of ideal generating $\Cl_\K$ has been proposed by Karim Belabas,
Francisco Diaz y Diaz and Eduardo Friedman~\cite{small-generators}. In all tests their method behaves
very well, producing a good bound $T(\K)$ (see Section~3 of~\cite{small-generators}) which is much lower
than $4\lDKsq$. However, the authors prove \cite[Theorem~4.3]{small-generators} that $T(\K)\geq
\big(\big(\frac1{4\nK}+o(1)\big)\lDK\llDK\big)^2$, and advance the conjecture that
$T(\K)\sim\big(\frac14\lDK\llDK\big)^2$. Thus, its impressive performance is the combined effect of
relatively large/small constants in front of these bounds and of the present computational power, but
which will disappear for large $\DK$.
\smallskip

In this paper we first prove in Theorem~\ref{theo:Teasynt} an explicit, easy and better version of Bach's
$(4+o(1))\lDKsq$ bound, in Corollary~\ref{coro:2} that $4\lDKsq$ is sufficient for a wide range of fields
and in Corollary~\ref{coro:Bach4.01} that $4.01\,\lDKsq$ is sufficient for all fields.
Corollary~\ref{coro:2} also contains an explicit bound showing that the universal constant $4.01$ actually
decays exponentially to $4$ with the degree of the field.\\
Secondly, in Theorem~\ref{estim-T(K)} we prove that $T(\K)\leq \big(\frac{1+o(1)}4\lDK\llDK\big)^2$, and
that $T(\K)\leq 3.9(\lDK\llDK)^2$ with only three exceptions which are explicit. In a private
communication K.~Belabas told us that he also has a proof for the first part of this claim. Together with
the lower bound in~\cite{small-generators} it shows in particular that $T(\K)\asymp(\lDK\llDK)^2$ for
fixed $\nK$.
\smallskip

The weight function of~\cite{small-generators} can be seen as the convolution square of a characteristic
function, i.e. of a one step function. Using the convolution square of a two (resp. three) steps
function, we show in Corollary~\ref{coro:boundT1cst} that the bound already improve to
$(6.04+o(1))\lDKsq$ (resp. $(4.81+o(1))\lDKsq$),
%
%
where moreover in both cases $o(1)<0$ for fields of degree $\nK\geq 3$. To further improve the result we
propose in Subsections~\ref{subsec:algo}--\ref{subsec:algoend} a different algorithm producing a new
bound $T_1(\K)$, and which is essentially a multistep version of Belabas, Diaz y Diaz and Friedman's
algorithm, where the number of steps is not set in advance. By design, it performs better than $T(\K)$
and is lower than all the bounds we have proved in the first part of the paper.
In Subsection~\ref{subsec:tests} we report the conclusions about extensive tests we have conducted on a
few thousands of pure and biquadratic fields: in all cases the algorithm produces $T_1(\K)$ lower
than $\lDKsq$ except for some biquadratic fields where it is anyway $\leq 1.004\lDKsq$.\\
All `little-$o$' terms in these formulas are explicit, simple, of order $\frac{\llDK}{\lDK}$ and with
small coefficients.
\smallskip

We have made two sample implementations of our algorithms. The first one is a script for
PARI/GP~\cite{PARI2} which can be found at the following address:\\
\url{http://users.mat.unimi.it/users/molteni/research/generators/bounds.gp}\\
The other is the branch \texttt{loic-bnf-optim} of the \texttt{git} tree of PARI/GP, available at\\
\url{http://pari.math.u-bordeaux.fr/git/pari.git}
\smallskip

\begin{acknowledgements}
We wish to thank Giacomo Gigante for his comments and interesting discussions, and the referee for
her/his useful comments. We also thank the referee of an early version of the second part of this paper
for the suggestion to use Cholesky's decomposition. The authors are members of the INdAM group GNSAGA.
\end{acknowledgements}
\enlargethispage{\baselineskip}

\section{Preliminary}
\begin{defi}
Let $\WC$ be the set of functions $F\colon [0,{+\infty})\to\RM$ such that
\begin{itemize}
\item $F$ is continuous;
\item $\exists\varepsilon>0$ such that the function
$F(x)e^{(\frac12+\varepsilon)x}$ is integrable and of bounded variation;
\item $F(0)>0$;
\item $(F(0)-F(x))/x$ is of bounded variation.
\end{itemize}
Let then, for $T>1$, $\WC(T)$ be the subset of $\WC$ such that
\begin{itemize}
\item $F$ has support in $[0,\log T]$;
\item the Fourier cosine transform of $F$ is non-negative.
\end{itemize}
\end{defi}

\begin{defi}
For any compactly supported function $F$ on $[0,\infty)$, we set
\[
\Iint(F) := \int_{0}^{+\infty}\frac{F(0)-F(x)}{2\sinh(x/2)}\d x
\quad\text{and}\quad
\Jint(F) := \int_{0}^{+\infty}\frac{F(x)}{2\cosh(x/2)}\d x.
\]
\end{defi}

\begin{defi}
Let $\TCK$ be the lowest $T$ such that the set $\{\pG\colon\N\pG\leq T\}$ generates $\Cl_\K$.
\end{defi}

The main result of \cite[Th.~2.1]{small-generators} can be reformulated as follows.
\begin{theo}[\textbf{Belabas, Diaz y Diaz, Friedman}]\label{theoKB}
Let $\K$ be a number field satisfying the Riemann Hypothesis for all $\mathrm L$-functions attached to
non-trivial characters of its ideal class group $\Cl_\K$, and suppose that there exists, for some $T>1$,
an $F\in\WC(T)$ such that
\begin{equation}\label{theoeq}
2\sum_\pG\log\N\pG\sum_{m=1}^{+\infty}\frac{F(m\log\N\pG)}{\N\pG^{m/2}}
>
F(0)(\lDK - (\g + \log 8\pi)\nK)
+ \Iint(F) \nK
- \Jint(F) r_1.
\end{equation}
Then $\TCK<T$.
\end{theo}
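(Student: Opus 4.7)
The plan is a proof by contradiction: I will assume $\TCK\geq T$ and derive a violation of the strict inequality \eqref{theoeq}. If the primes with norm at most $T$ fail to generate $\Cl_\K$, then by Pontryagin duality there exists a non-trivial character $\chi$ of $\Cl_\K$ with $\chi(\pG)=1$ for every prime $\pG$ satisfying $\N\pG\leq T$. Viewed as a Hecke character, $\chi$ is unramified at all finite and infinite places, so $L(s,\chi)$ is entire and its archimedean gamma factor coincides with that of $\zK$.

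Next, I would apply Weil's explicit formula to $L(s,\chi)$ against $F\in\WC(T)$. The analytic conditions defining $\WC$---continuity, the existence of $\varepsilon>0$ such that $F(x)e^{(1/2+\varepsilon)x}$ is integrable and of bounded variation, and the bounded variation of $(F(0)-F(x))/x$---are exactly what is needed to make the formula rigorous for this class of test functions. The resulting identity has the schematic shape
\[
\sum_\rho \hat F(\g) + 2\sum_\pG\log\N\pG\sum_{m=1}^{+\infty}\frac{F(m\log\N\pG)}{\N\pG^{m/2}}\chi(\pG^m) = F(0)\bigl(\lDK-(\g+\log 8\pi)\nK\bigr) + \Iint(F)\nK - \Jint(F)r_1,
\]
where the right-hand side gathers the conductor contribution $F(0)\lDK$ and the archimedean terms $\Iint(F)\nK$ and $-\Jint(F)r_1$, coming respectively from the $\nK$ local gamma factors of $\zK$ and from the $r_1$ real places (recall that $\chi_\infty$ is trivial).

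Finally, I would invoke the two positivities at the heart of the argument. GRH for $L(s,\chi)$ places every non-trivial zero on the critical line $\rho=\tfrac12+i\g$ with $\g\in\RM$, and the hypothesis that the Fourier cosine transform of $F$ is non-negative yields $\hat F(\g)\geq 0$, so the zero-sum is non-negative. Meanwhile, since $\supp F\subseteq[0,\log T]$, a non-vanishing summand $F(m\log\N\pG)$ forces $\N\pG^m\leq T$, hence $\N\pG\leq T$, whence $\chi(\pG^m)=\chi(\pG)^m=1$ by our choice of $\chi$. Thus the prime sum in the explicit formula equals exactly the left-hand side of \eqref{theoeq}, and combining with the non-negativity of the zero-sum produces the reverse of \eqref{theoeq}, contradicting the hypothesis. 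Hence $\TCK<T$.

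The main obstacle is pinning down the precise form and signs of the archimedean contribution so that the constants $\g+\log 8\pi$, $\Iint(F)\nK$ and $\Jint(F)r_1$ match those in \eqref{theoeq}; once this book-keeping is done, the positivity argument is essentially automatic. A minor subtlety concerns the boundary $\N\pG=T$, but since $\TCK$ is defined via $\N\pG\leq T$ and $\supp F\subseteq[0,\log T]$ is closed, the conditions align, and the strict inequality in the hypothesis is what upgrades the conclusion from $\TCK\leq T$ to $\TCK<T$.
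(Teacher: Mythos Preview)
Your proposal is essentially correct and follows the standard route, which is indeed the argument in Belabas--Diaz y Diaz--Friedman. Note, however, that the present paper does not itself prove Theorem~\ref{theoKB}: it is quoted as a reformulation of \cite[Th.~2.1]{small-generators}, and the only thing done afterwards is to rewrite the hypothesis~\eqref{theoeq} via Weil's explicit formula for $\zK$ as the equivalent inequality~\eqref{equivtheo}. So there is no ``paper's own proof'' to compare against; your sketch supplies exactly the argument the cited reference contains (contrapositive via a non-trivial class-group character trivial on small primes, explicit formula for $L(s,\chi)$ with no polar term, and positivity of the zero-sum under GRH plus non-negativity of the cosine transform).

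One small point: your handling of the boundary case is not quite what you claim. If $\TCK=T$ exactly, the primes with $\N\pG\leq T$ \emph{do} generate $\Cl_\K$, so you cannot choose $\chi$ trivial on all of them; you can only make $\chi$ trivial on $\{\pG:\N\pG<T\}$. Then for a prime with $\N\pG^m=T$ the factor $\chi(\pG^m)$ need not equal $1$, and comparing real parts gives the inequality in the wrong direction. The clean fix is not the strict inequality per se but a small perturbation: since~\eqref{theoeq} is strict and all quantities vary continuously, one may shrink the support of $F$ slightly to lie in $[0,\log T')$ with $T'<T$ while preserving the inequality, after which only primes with $\N\pG<T$ occur in the sum and the contradiction goes through, yielding $\TCK\leq T'<T$. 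This is a technicality, not a structural gap.
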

Assuming \GRH, Weil's Explicit Formula (see~\cite[Ch.~XVII, Th.~3.1]{Lang:algnumtheory}), as simplified by
Poitou in~\cite{Poitou:petits-discs}, can be written for $F\in\WC$ as
\begin{multline}\label{WEF}
2\sum_\g \int_{0}^{+\infty}F(x)\cos(x\g) \d x
=
4\int_0^{+\infty}F(x)\cosh\Big(\frac{x}{2}\Big)\d x                        \\
- 2\sum_\pG\log\N\pG\sum_{m=1}^{+\infty}\frac{F(m\log\N\pG)}{\N\pG^{m/2}}
+ F(0)(\lDK - (\g + \log 8\pi)\nK)
+ \Iint(F) \nK
- \Jint(F) r_1.
\end{multline}
Hence~\eqref{theoeq} can be stated as
\begin{equation}\label{equivtheo}
2\int_{0}^{+\infty}F(x)\cosh\Big(\frac{x}{2}\Big)\d x
>
\sum_\g \int_{0}^{+\infty}F(x)\cos(x\g) \d x.
\end{equation}

Let $\Phi$ be an even, integrable and compactly supported function, and let $F=\Phi\ast\Phi$. Then
\begin{align*}
\int_{0}^{+\infty} F(x)\cosh\Big(\frac{x}{2}\Big)\d x
  &= 2\Big(\int_{0}^{+\infty}\Phi(x)\cosh\Big(\frac{x}{2}\Big)\d x\Big)^2, \\
\int_{0}^{+\infty} F(x)\cos(xt)\d x
  &= 2\Big(\int_{0}^{+\infty}\Phi(x)\cos(xt)\d x\Big)^2,
\end{align*}
and $F$ satisfies~\eqref{equivtheo} and hence~\eqref{theoeq} if and only if
\begin{equation}\label{eqzero}
8\Big(\int_{0}^{+\infty}\Phi(x)\cosh\Big(\frac{x}{2}\Big)\d x\Big)^2
>
\sum_\g\widehat\Phi(\g)^2,
\end{equation}
where we have set $\widehat\Phi(t):=\int_{\RM}\Phi(x)e^{ixt}\d x=2\int_{0}^{+\infty}\Phi(x)\cos(xt)\d x$.

\section{Bounds for class group generators}
Assume $T>1$ and let $L:=\log T$. Let $\Phi^+$ be a real, non-negative, piecewise continuous function with
positively measured support in $[0,L]$, and let
\begin{equation}\label{eq:setup}
\begin{split}
\Phi^-(x)     &:= \Phi^+(-x),                     \\
\Phi^\circ(x) &:= \Phi^+(L/2+x),                  \\
\Phi(x)       &:= \Phi^\circ(x) + \Phi^\circ(-x), \\
F             &:= \Phi\ast\Phi.
\end{split}
\end{equation}
These choices ensure that $F\in\WC(T)$.
\begin{prop}\label{prop:eqzero-simplifiee}
Assume \GRH\ and let $F$ as in~\eqref{eq:setup}. Then~\eqref{theoeq} is satisfied by $F$ if
\begin{multline}\label{equiveqzero-Phi-simplifiee}
\sqrt{T}
\geq
\frac{2T\int_{0}^{L}\big(\Phi^+(x)\big)^2\d x}{\big(\int_{0}^L\Phi^+(x)e^{x/2}\d x\big)^2}
( \lDK
- (\g + \log 8\pi)\nK
)                                                                                          \\
- \frac{4T\sum_{\pG,m}\log\N\pG\frac{(\Phi^+\ast\Phi^-)(m\log\N\pG)}{\N\pG^{m/2}}}{\big(\int_{0}^L\Phi^+(x)e^{x/2}\d x\big)^2}
+ \frac{2T\,\Iint(\Phi^+\ast\Phi^-)\nK}{\big(\int_{0}^L\Phi^+(x)e^{x/2}\d x\big)^2}
+ \frac{2T\int_{0}^L\Phi^+(x)e^{-x/2}\d x}{\int_{0}^L\Phi^+(x)e^{x/2}\d x}.
\end{multline}
\end{prop}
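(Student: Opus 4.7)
The strategy is to apply the explicit formula \eqref{WEF} to the auxiliary function $G := \Phi^+\ast\Phi^-$. This $G$ is even, non-negative, supported in $[-L,L]$, and has $\widehat G(t)=\widehat{\Phi^+}(t)\,\overline{\widehat{\Phi^+}(t)}=|\widehat{\Phi^+}(t)|^2\geq 0$, so $G\in\WC$. The key pointwise bound $\widehat{\Phi}(t)^2\leq 4\widehat G(t)$ comes from writing, via $\Phi(x)=\Phi^+(L/2+x)+\Phi^+(L/2-x)$ on $[0,L/2]$ and the change of variable $y=L/2\pm x$,
\[
\int_0^\infty \Phi(x)\cos(xt)\d x=\int_0^L\Phi^+(y)\cos((y-L/2)t)\d y=P(t)\cos(Lt/2)+Q(t)\sin(Lt/2),
\]
with $P(t)+iQ(t)=\widehat{\Phi^+}(t)$, and applying Cauchy--Schwarz to bound this square by $P^2+Q^2=\widehat G(t)$. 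Hence $\sum_\g\widehat\Phi(\g)^2\leq 4\sum_\g\widehat G(\g)$, so \eqref{eqzero} is implied by $2\big(\int_0^\infty \Phi(x)\cosh(x/2)\d x\big)^2\geq\sum_\g\widehat G(\g)$.

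Setting $A:=\int_0^L\Phi^+(x)e^{x/2}\d x$ and $B:=\int_0^L\Phi^+(x)e^{-x/2}\d x$, the same change of variable yields $\int_0^\infty\Phi(x)\cosh(x/2)\d x=\tfrac12(T^{-1/4}A+T^{1/4}B)$, so that $2\big(\int_0^\infty \Phi\cosh(x/2)\d x\big)^2=\frac{A^2}{2\sqrt T}+AB+\frac{\sqrt T B^2}{2}$. A short Fubini calculation (splitting $\cosh$ into two exponentials) yields $\int_{-\infty}^\infty G(x)\cosh(x/2)\d x=AB$, and hence $4\int_0^\infty G(x)\cosh(x/2)\d x=2AB$. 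Plugging these expressions, together with the expansion of $\sum_\g\widehat G(\g)$ provided by \eqref{WEF} applied to $G$, into the sufficient condition above and rearranging, it becomes equivalent to
\[
\frac{(A-\sqrt T B)^2}{2\sqrt T}+\Jint(G)\,r_1+2\sum_{\pG,m}\log\N\pG\,\frac{G(m\log\N\pG)}{\N\pG^{m/2}}\geq G(0)\big(\lDK-(\g+\log 8\pi)\nK\big)+\Iint(G)\,\nK,
\]
where $G(0)=\int_0^L(\Phi^+(x))^2\d x$.

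To conclude, I will rearrange \eqref{equiveqzero-Phi-simplifiee}: multiplying it by $A^2/(2T)$ gives $\frac{A^2}{2\sqrt T}-AB+2\sum_{\pG,m}\log\N\pG\,G(m\log\N\pG)/\N\pG^{m/2}\geq G(0)(\lDK-(\g+\log 8\pi)\nK)+\Iint(G)\,\nK$. Since $\frac{(A-\sqrt T B)^2}{2\sqrt T}=\frac{A^2}{2\sqrt T}-AB+\frac{\sqrt T B^2}{2}\geq\frac{A^2}{2\sqrt T}-AB$ and $\Jint(G)\,r_1\geq 0$, the hypothesis of the proposition implies the displayed sufficient condition, and thus \eqref{theoeq}. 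The main delicate points are the clean execution of the Cauchy--Schwarz step and the verification that $G\in\WC$ so that \eqref{WEF} legitimately applies to it; the rest is algebraic bookkeeping to match the two presentations of the sufficient condition.
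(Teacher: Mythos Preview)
Your argument is correct and follows essentially the same route as the paper: both bound $\widehat\Phi(t)^2\le 4|\widehat{\Phi^+}(t)|^2=4\widehat G(t)$ (the paper writes this as $\widehat\Phi(t)=2\Ree[e^{-iLt/2}\widehat{\Phi^+}(t)]$, which is your $P\cos+Q\sin$ identity), apply \eqref{WEF} to $G=\Phi^+\ast\Phi^-$, compute $4\int_0^\infty G\cosh(x/2)\d x=2AB$ and $G(0)=\int_0^L(\Phi^+)^2$, and discard the non-negative terms $\Jint(G)r_1$ and the one of size $\sqrt T\,B^2$. The only cosmetic difference is that the paper drops the $B^2$ contribution at the very first step (bounding $8(\int\Phi\cosh)^2>\frac{2}{\sqrt T}A^2+4AB$) whereas you carry it to the end as $\frac{(A-\sqrt T B)^2}{2\sqrt T}\ge\frac{A^2}{2\sqrt T}-AB$; the resulting inequality is the same.
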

\begin{proof}
We have
\begin{align*}
2\int_{0}^{+\infty}\Phi(x)\cosh\Big(\frac{x}{2}\Big)\d x
%
%
       =  \frac{1}{T^{1/4}}\int_{0}^L\Phi^+(x)e^{x/2}\d x
        +          T^{1/4} \int_{0}^L\Phi^+(x)e^{-x/2}\d x.
\end{align*}
Hence
\begin{multline*}
8\Big(\int_{0}^{+\infty}\Phi(x)\cosh\Big(\frac{x}{2}\Big)\d x\Big)^2\\
>   \frac{2}{\sqrt{T}} \Big(\int_{0}^L\Phi^+(x)e^{x/2}\d x\Big)^2
         + 4\int_{0}^L\Phi^+(x)e^{x/2}\d x\int_{0}^L\Phi^+(x)e^{-x/2}\d x.
\end{multline*}
Moreover
\begin{equation}\label{eq:1c}
\widehat{\Phi}(t)
%
       = 2\Ree\int_{\RM}\Phi^\circ(x)e^{ixt}\d x
%
%
       = 2\Ree\big[e^{-i\frac{Lt}{2}}\widehat{\Phi^+}(t)\big].
\end{equation}
Hence
\begin{equation}\label{eq:2c}
|\widehat{\Phi}(t)|^2
\leq 4 \big|e^{-i\frac{Lt}{2}}\widehat{\Phi^+}(t)\big|^2
=    4 \big|\widehat{\Phi^+}(t)\big|^2.
\end{equation}
We have $\big|\widehat{\Phi^+}(t)\big|^2 = \widehat{\Phi^+}(t) \overline{\widehat{\Phi^+}(t)} =
\widehat{\Phi^+}(t) \widehat{\Phi^-}(t) = \widehat{\Phi^+\ast\Phi^-}(t)$. Thus to
satisfy~\eqref{eqzero} it is sufficient that
\begin{equation}\label{equiveqzero-Phi}
\frac{2}{\sqrt{T}} \Big(\int_{0}^L\Phi^+(x)e^{x/2}\d x\Big)^2
+ 4\int_{0}^L\Phi^+(x)e^{x/2}\d x\int_{0}^L\Phi^+(x)e^{-x/2}\d x
\geq \sum_\g \widehat{\digamma}(\g),
\end{equation}
where
\[
\digamma := 4\Phi^+\ast\Phi^-.
\]
By Weil's Explicit Formula~\eqref{WEF},
\begin{multline}\label{Weil-Phi}
\sum_\g \widehat{\digamma}(\g)
\leq \digamma(0)\big(\lDK - (\g + \log 8\pi)\nK\big)
    - 2\sum_{\pG,m}\log\N\pG\frac{\digamma\big(m\log\N\pG\big)}{\N\pG^{m/2}} \\
    + \Iint(\digamma)\nK
    + 4\int_{0}^{+\infty} \digamma(x)\cosh\Big(\frac{x}{2}\Big)\d x
\end{multline}
where we cancelled the term $-\Jint(\digamma)r_1$ because $\digamma\geq 0$. Notice that
\enlargethispage{2\baselineskip}
\begin{align}
\digamma(0)
&=4\int_{0}^{L}\big(\Phi^+(x)\big)^2\d x                                   \label{bho}
\intertext{and}
\int_{0}^{+\infty} \digamma(x)\cosh\Big(\frac{x}{2}\Big)\d x
&= \frac{1}{2}\int_{\RM}\digamma(x)e^{x/2}\d x                              \nt
&= 2\int_{0}^{L} \Phi^+(x) e^{x/2}\d x \int_{0}^{L} \Phi^+(x) e^{-x/2}\d x. \label{ribho}
\end{align}
The claim follows combining~\eqref{equiveqzero-Phi}, \eqref{Weil-Phi}, \eqref{bho} and~\eqref{ribho}.
\end{proof}

\subsection{Upper bound for $\TeK$}
The coefficient of $\lDK$ in Proposition~\ref{prop:eqzero-simplifiee} is
\[
\frac{2T\int_{0}^{L}\big(\Phi^+(x)\big)^2\d x}{\big(\int_{0}^L\Phi^+(x)e^{x/2}\d x\big)^2}.
\]
Cauchy--Schwarz's inequality shows that its minimum value is $\frac{2T}{T-1}$ and it is attained only for
$\Phi^+(x)=e^{x/2}$ on $[0,L]$. We are interested into small values for this coefficient, hence this is
the best choice we can do. However, this function produces in~\eqref{equiveqzero-Phi-simplifiee} an
inequality for $T$ that cannot be solved easily and, moreover, this choice does not give the best possible
results for secondary coefficients. To overcome this problem in the next theorem we consider the
functions $e^{x/2}\chi_{[L-a,L]}(x)$, where $a$ is a parameter which is fixed in $(0,L]$. This is a
suboptimal choice for the coefficient of $\lDK$ if $a \neq L$, but every value of $a$ independent of $T$
produces an inequality which can be solved easily, still having the correct order for the main term.
Furthermore, acting on $a$ we can minimize also the total contribution coming from the other terms
in~\eqref{equiveqzero-Phi-simplifiee}. Theorem~\ref{theo:Teasynt} is proved using several values of $a$,
and would not be accessible using only the conclusions coming from the choice $a=L$.
\begin{defi}
Assume $a\in(0,L]$. Let $\Phie^+(x):=e^{x/2}\Chi_{[L-a,L]}(x)$ and let $F_{\e}$ be the $F$ defined
in~\eqref{eq:setup} when $\Phi^+=\Phie^+$.
\end{defi}
\begin{rem}\label{rem:Fe}
We recall that $F_{\e}$ is even with support in $[-L,L]$. Moreover, we find that for every $x\in [0,L]$,
\begin{align*}
F_{\e}(x)&= \delta_1(x)(2a-L+x)e^{x/2}\sqrt{T}
         + \delta_2(x)(L-x)e^{x/2}\sqrt{T}  \\
       &\quad
         + 2\delta_3(x)\big(e^{-x/2}-e^{x/2-a}\big)T
         + \delta_4(x)(2a-L-x)e^{-x/2}\sqrt{T}
\end{align*}
where $\delta_1:=\Chi_{[L-2a,L-a)}$, $\delta_2:=\Chi_{[L-a,L]}$, $\delta_3:=\Chi_{[0,a]}$ and
$\delta_4:=\Chi_{[0,2a-L]}$.
\end{rem}

\begin{defi}
Let $\TeK$ be the minimal $T$ such that the function $F_{\e}$ satisfies~\eqref{theoeq} for some $a$.
\end{defi}
Note that $\TCK\leq \TeK$ so that we will state most results about $\TeK$ below as results on $\TCK$ as
well.
\begin{theo}\label{theo:Phie}
Assume \GRH. Fix $T_0>1$. We then have
\begin{align}\label{eq:Phielong}
\sqrt{\TeK}
&\leq \max\Big(\sqrt{T_0},
               r(\lDK,\nK,T_0)
                  - \frac{4}{\big(1-T_0^{-1}\big)^2}\sum_{\N\pG^m\leq T_0}\Big(\frac{1}{\N\pG^m}-\frac{1}{T_0}\Big)\log\N\pG
          \Big),
\intertext{in particular}
\label{eq:Phie}
\sqrt{\TeK}
&\leq \max\big(\sqrt{T_0},r(\lDK,\nK,T_0)\big),
\end{align}
where
\[
r(\Ell,n,t) := \frac{2}{1-t^{-1}}\Big(\Ell
                                    + \log t
                                    - \Big(\g
                                         + \log 2\pi
                                         -  \frac{\log t}{t-1}
                                         +  \log\big(1-t^{-1}\big)\Big)n\Big).
\]
\end{theo}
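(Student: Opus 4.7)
The plan is to apply Proposition~\ref{prop:eqzero-simplifiee} with the specific choice $\Phi^+=\Phie^+$, where the parameter $a$ is set to $\log T_0$. Since $\Phie^+$ is supported on $[L-a,L]$, we need $a\leq L$, i.e.\ $T\geq T_0$; this restriction will force the $\max(\sqrt{T_0},\cdot)$ at the end. First I would compute the four elementary integrals of $\Phie^+$ needed by the proposition. Using $e^{L-a}=T/T_0$, I get
\[
\int_0^L\bigl(\Phie^+\bigr)^2\d x = \int_0^L\Phie^+(x)e^{x/2}\d x = T(1-T_0^{-1}),\qquad
\int_0^L\Phie^+(x)e^{-x/2}\d x = \log T_0.
\]
The coefficient of $\lDK$ and the last term in~\eqref{equiveqzero-Phi-simplifiee} then simplify immediately to $2/(1-T_0^{-1})$ and $2\log T_0/(1-T_0^{-1})$, already reproducing the $\lDK + \log T_0$ structure inside $r$.

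Next I would work out the convolution. A direct computation gives
\[
(\Phie^+\ast\Phie^-)(y) = T\bigl(e^{-y/2} - e^{y/2}/T_0\bigr) \quad \text{for } 0\leq y\leq a,
\]
vanishing for $y>a$, and $(\Phie^+\ast\Phie^-)(0)=T(1-T_0^{-1})$. For $\Iint(\Phie^+\ast\Phie^-)$ I would split the integral at $y=a$. On $[0,a]$ the substitution $u=e^{y/2}$ turns both pieces of $((\Phie^+\ast\Phie^-)(0)-(\Phie^+\ast\Phie^-)(y))/(2\sinh(y/2))$ into rational expressions in $u$ that integrate to logarithms of $T_0^{1/2}\pm 1$ and $2$; on $[a,\infty)$ one uses the antiderivative $\log\bigl((e^{y/2}-1)/(e^{y/2}+1)\bigr)$ of $1/(2\sinh(y/2))$. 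Combining the three pieces and using $(1-T_0^{-1})=(T_0^{1/2}-1)(T_0^{1/2}+1)/T_0$, the result simplifies to
\[
\Iint(\Phie^+\ast\Phie^-) = \frac{T\log T_0}{T_0} + 2T(1-T_0^{-1})\log 2 - T(1-T_0^{-1})\log(1-T_0^{-1}).
\]

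Finally I would assemble the inequality of Proposition~\ref{prop:eqzero-simplifiee}. The prime-ideal term, restricted by $\supp(\Phie^+\ast\Phie^-)\subset[0,a]$ to $\N\pG^m\leq T_0$, gives
\[
\sum_{\pG,m}\log\N\pG\frac{(\Phie^+\ast\Phie^-)(m\log\N\pG)}{\N\pG^{m/2}}
= T\sum_{\N\pG^m\leq T_0}\log\N\pG\Bigl(\frac{1}{\N\pG^m}-\frac{1}{T_0}\Bigr),
\]
and after dividing by $(T(1-T_0^{-1}))^2$ the factor $T$ cancels, producing exactly the subtracted term in \eqref{eq:Phielong}. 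Dividing the coefficient of $\nK$ by the same denominator and using $\log 8\pi - 2\log 2 = \log 2\pi$ to absorb the $2\log 2$ from $\Iint$ into the $\g+\log 8\pi$ term, everything collapses to the compact form $\frac{2}{1-T_0^{-1}}(\lDK+\log T_0-(\g+\log 2\pi-\frac{\log T_0}{T_0-1}+\log(1-T_0^{-1}))\nK) = r(\lDK,\nK,T_0)$. Hence whenever $\sqrt{T}\geq T_0^{1/2}$ \emph{and} $\sqrt{T}$ exceeds the RHS of \eqref{eq:Phielong}, Proposition~\ref{prop:eqzero-simplifiee} gives $F_\e\in\WC(T)$ satisfying \eqref{theoeq}, so $\TeK<T$; taking the infimum over such $T$ proves \eqref{eq:Phielong}, and \eqref{eq:Phie} follows on dropping the nonnegative prime sum.

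The main obstacle is the computation and simplification of $\Iint(\Phie^+\ast\Phie^-)$: the intermediate expressions involve $\log(T_0^{1/2}\pm 1)$ and $\log 2$ that must cancel in precisely the right way to merge with the $\gamma+\log 8\pi$ constant and produce the clean logarithmic combination appearing in $r$. Once this bookkeeping is done the rest is a straightforward substitution and a one-line case distinction that explains the $\max$ with $\sqrt{T_0}$.
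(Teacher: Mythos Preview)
Your proposal is correct and follows essentially the same route as the paper's proof: apply Proposition~\ref{prop:eqzero-simplifiee} with $\Phi^+=\Phie^+$ and $a=\log T_0$, compute the four integrals and the convolution, and assemble. The only cosmetic differences are that the paper keeps $a$ generic until the final substitution and records $\Iint(\Phie^+\ast\Phie^-)$ in the equivalent form $(\log 4)(1-e^{-a})T+aT-(1-e^{-a})\log(e^a-1)T$, whereas you fix $a=\log T_0$ at the outset and give more detail on the $\Iint$ computation.
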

\begin{proof}
We are assuming $\Phi^+=\Phie^+$ for some $a\leq L$. In this case we have
\begin{align*}
\int_{0}^{L}\Phie^+(x)e^{x/2}\d x
&= \big(1-e^{-a}\big)T
 = \int_{0}^{L}\big(\Phie^+(x))^2\d x, \\
\int_{0}^{L}\Phie^+(x)e^{-x/2}\d x
&= a.
\end{align*}
Moreover, $\forall x\in[0,a]$,
\begin{align*}
\Phie^+\ast\Phie^-(x)
 &= \big(e^{-x/2}-e^{x/2-a}\big)T.
\end{align*}
In addition $\forall x>a$, $\Phie^+\ast\Phie^-(x)=0$. This means that
\begin{align*}
\Iint(\Phie^+\ast\Phie^-)
  &  =  \int_{0}^{+\infty} \frac{\Phie^+\ast\Phie^-(0)-\Phie^+\ast\Phie^-(x)}{2\sinh(x/2)}\d x \\
  &  =  (\log 4)\big(1-e^{-a}\big)T
     +  aT
     -  \big(1-e^{-a}\big)\log\big(e^{a}-1\big)T.
\end{align*}
We now set $a=:\log T_0$ for some $T_0>1$. Since we need to have $L=\log T\geq a$, we get
that~\eqref{equiveqzero-Phi-simplifiee} is satisfied for any $T\geq T_0$ such that
{\small
\begin{multline*}
\sqrt{T}
\geq                                                                                                  \\
\frac{2}{1-T_0^{-1}}\Big(\lDK
- \frac{2}{1-T_0^{-1}}\sum_{\N\pG^m\leq T_0}\Big(\frac{\log\N\pG}{\N\pG^m}-\frac{\log\N\pG}{T_0}\Big)
- \Big(\g + \log 2\pi
               -  \frac{\log T_0}{T_0-1}
               +  \log\big(1-T_0^{-1}\big)\Big)\nK
+ \log T_0\Big).
\end{multline*}
}%
Since the right-hand side does not depend on $T$, this proves the first claim. The second is an obvious
consequence, because the sum on prime ideals is non-negative.
\end{proof}

\subsection{Upper bounds for class group generators}
Theorem~\ref{theo:Teasynt} below gives an upper bound for $\TeK$, and hence for $\TCK$. It is
essentially the best result we can deduce from Theorem~\ref{theo:Phie} (see the remark immediately
following the proof). The theorem has Corollaries~\ref{coro:2} and~\ref{coro:Bach4.01} as easy
consequences.
\begin{theo}\label{theo:Teasynt}
We have
\begin{align*}
\sqrt{\TCK}\leq
\sqrt{\TeK} &\leq 2\Big(\lDK
                      + \llDK
                      - (\g+\log 2\pi)\nK
                      + 1
                      + (\nK+1)\tfrac{\log(7\lDK)}{\lDK}
                   \Big).
\intertext{Moreover, if $\lDK\geq \nK2^{\nK}$, we have}
\sqrt{\TCK}\leq
\sqrt{\TeK} &\leq 2(\lDK
                  + \llDK
                  - (\g+\log 2\pi)\nK
                  + 1).
\end{align*}
\end{theo}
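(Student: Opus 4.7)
The plan is to apply Theorem~\ref{theo:Phie} with the canonical choice $T_0=\lDK$; this is natural because $\log T_0=\llDK$ then matches the secondary term of the target bound. With this choice $T_0>1$ holds as soon as $\K\neq\QM$, and the $\sqrt{T_0}=\sqrt{\lDK}$ branch of the max in \eqref{eq:Phie} is dominated by the target right-hand side (which is linear in $\lDK$), so the task reduces to bounding $r(\lDK,\nK,\lDK)$.

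For the first inequality I would use \eqref{eq:Phie}, write $\tfrac{1}{1-\lDK^{-1}}=1+\tfrac{1}{\lDK-1}$, and expand $r(\lDK,\nK,\lDK)$ into a principal part equal to exactly $2(\lDK+\llDK+1-(\g+\log 2\pi)\nK)$ plus a remainder. The remainder is built from pieces of the shape $(\lDK+\llDK-(\g+\log 2\pi)\nK)/(\lDK-1)$ together with the $\nK$-corrections $\nK\llDK/(\lDK-1)$ and $-\nK\log(1-\lDK^{-1})$, each multiplied by a factor $\tfrac{1}{1-\lDK^{-1}}$. Using the elementary inequality $-\log(1-\lDK^{-1})\leq 1/(\lDK-1)$, dropping negative contributions, and grouping what is left, every remainder piece is bounded above by a positive multiple of $(\nK+1)\llDK/\lDK$; the constant $7$ inside $\log(7\lDK)=\llDK+\log 7$ is then calibrated so that the extra $\log 7$ absorbs any remaining absolute constants and the slight discrepancy between $1/(\lDK-1)$ and $1/\lDK$.

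For the second, sharper inequality I would use \eqref{eq:Phielong} instead: the prime sum subtracted there must absorb the entire residual just produced. Under $\lDK\geq\nK 2^\nK$ we have $\lDK^{1/\nK}\geq 2$, so the rational prime $p=2$ is always available. Every prime ideal $\pG\mid 2$ satisfies $\N\pG=2^{f_\pG}\leq 2^\nK\leq T_0$ and contributes a strictly positive term $(1/\N\pG^m-1/T_0)\log\N\pG$ for at least $m=1$; combining these with the identity $\sum_{\pG\mid 2}f_\pG e_\pG=\nK$ yields a lower bound on the contribution of the primes above $2$ which, multiplied by $4/(1-T_0^{-1})^2$, outweighs the $(\nK+1)\log(7\lDK)/\lDK$ residual of the first part, with the margin widening as $\lDK/(\nK 2^\nK)$ grows.

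The main obstacle will be the bookkeeping in the two expansions: in the first part, calibrating the constant $7$ so that every non-logarithmic remainder is absorbed \emph{uniformly} in $\nK$; in the second, verifying that the contribution from the rational primes below $\lDK^{1/\nK}$ really does dominate the full residual at the threshold $\lDK=\nK 2^\nK$, where the available primes are scarcest and the pessimistic case of an inert $p=2$ is the binding one.
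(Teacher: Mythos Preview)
Your overall strategy---apply Theorem~\ref{theo:Phie} with $T_0$ near $\lDK$, expand $r$, and for the sharp bound exploit the prime sum in~\eqref{eq:Phielong}---matches the paper's. But there is a real gap, and it is not ``bookkeeping''.

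The assertion that the $\sqrt{T_0}$ branch ``is dominated by the target right-hand side (which is linear in $\lDK$)'' is false in general: the target contains $-2(\g+\log 2\pi)\nK\approx -4.83\,\nK$, so for fields near the Odlyzko lower bound the target can drop below $\sqrt{\lDK}$. Concretely, for $\nK=10$ the minimal discriminant has $\lDK\approx 19.06$; the target is then $\approx 3.36$ while $\sqrt{\lDK}\approx 4.37$. Separately, your choice $T_0=\lDK$ makes the prefactor $(1-\lDK^{-1})^{-1}$ in $r$ blow up as $\lDK\to 1^+$: for $\K=\QM[\sqrt{-3}]$ one finds $r(\lDK,2,\lDK)\approx 68$ against a target of $\approx 5.9$. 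So for these fields neither branch of~\eqref{eq:Phie} with your fixed $T_0$ yields the claim, and no calibration of the constant~$7$ in your expansion can repair this.

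The paper confronts exactly this difficulty. It takes $T_0=\lDK+1$ (so the prefactor never exceeds~$2$), proves the expansion bound for $\lDK\geq 4$, and then checks the $\sqrt{T_0}$ branch against Odlyzko's discriminant tables, finding that it can fail only for $\nK\leq 16$ and $\lDK$ below an explicit threshold $\LC_0(\nK)$. For those finitely many remaining ranges it \emph{abandons the fixed-$T_0$ approach}: it takes $T_0$ to be the unique solution of $\lDK=\Ef(\nK,T_0)$ (this is the case $a=L$ in Theorem~\ref{theo:Phie}, where $r(\lDK,\nK,T_0)=\sqrt{T_0}$ so the max collapses) and verifies numerically that this $\sqrt{T_0}$ lies below the target. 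The constant~$7$ in $\log(7\lDK)$ is precisely what makes that last numerical check go through; it is not absorbing slack from an expansion of $r$. Your plan for the second claim is closer to the paper's---the inert prime above~$2$ is indeed the binding case---but here again the paper needs a second choice $T_0=\lDK+7$ to close the gap for $2\leq\nK\leq 6$.
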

\begin{proof}
We use~\eqref{eq:Phie} with $T_0=\lDK+1$. We have
\begin{align*}
\frac{1}{2}r(\Ell,n,\Ell+1) &= \Ell
                               + \log\Ell
                               - (\g+\log 2\pi)n
                               + 1
                               + (n+1)\frac{\log\Ell}{\Ell}
                               - f(\Ell)n
                               + g(\Ell),
\end{align*}
where
\begin{align*}
f(\Ell) &:= (\g + \log 2\pi)\Ell^{-1}-(1+\Ell^{-1})^2-\Ell^{-2}\log\Ell, \\
g(\Ell) &:= (1+\Ell^{-1})\log(1+\Ell^{-1}).
\end{align*}
We have $f(\Ell)\geq 0$ and $g(\Ell)-2f(\Ell)\leq 0$ for any $\Ell\geq 4$. This proves that
\begin{multline}\label{eq:rlDK+1}
\frac{1}{2}r(\lDK,\nK,\lDK+1) \\
\leq \lDK
  +  \llDK
  -  (\g+\log 2\pi)\nK
  +  1
  +  (\nK+1)\frac{\llDK}{\lDK}
\end{multline}
for any $\K$ such that $\lDK\geq 4$.
%
%
We look under which condition $\frac{1}{2}\sqrt{T_0}$ satisfies the same bound, i.e. when
\begin{equation}\label{eq:sqrtlDK+1}
\frac{1}{2}\sqrt{\lDK+1} \leq \lDK
                            +  \llDK
                            -  (\g+\log 2\pi)\nK
                            +  1
                            +  (\nK+1)\frac{\llDK}{\lDK}.
\end{equation}
Let, for $n\geq 2$ and $\Ell\geq 1$
\[
h(\Ell,n):= \Ell
          + \log\Ell
          - (\g+\log 2\pi)n
          + 1
          + (n+1)\frac{\log\Ell}{\Ell}
          - \frac{1}{2}\sqrt{\Ell+1},
\]
so that~\eqref{eq:sqrtlDK+1} holds true if $h(\lDK,\nK)\geq 0$. We have $\frac{\partial h}{\partial \Ell}
\geq 0$ if $\Ell\geq 0.2n-1$,
%
%
which is true each time $\Ell=\lDK$ and $n=\nK$. This allows to prove that if $\Ell_0\geq 0.2n-1$
satisfies $h(\Ell_0,n)\geq 0$, then $h(\Ell,n)\geq 0$ if $\Ell\geq \Ell_0$. Case $b=2.3$ in Table~3
of~\cite{Odlyzko:tables} shows that $\lDK\geq 2.8\nK-9.6$:
%
%
using this inequality we have $h(\lDK,\nK)\geq 0$ if $\nK\geq 17$.
%
%
For $2\leq \nK\leq 16$, we still have $h(\lDK,\nK)\geq 0$ for $\lDK\geq \LC_0(\nK)$ where $\LC_0(\nK)$ is
indicated in the table below. The table also gives the minimum possible $\lDK$ for the given $\nK$,
computed either with ``megrez'' number field table or with Odlyzko's Table~3.
\[
\begin{array}{rcr|rcr|rcr}
\nK & \min\lDK & \LC_0(\nK) &
\nK & \min\lDK & \LC_0(\nK) &
\nK & \min\lDK & \LC_0(\nK) \\
\hline
  2 &    1.098 &      2.697 &
  7 &   12.125 &     13.676 &
 12 &   24.336 &     25.675 \\
  3 &    3.135 &      4.576 &
  8 &   13.972 &     16.053 &
 13 &   27.749 &     28.096 \\
  4 &    4.762 &      6.728 &
  9 &   17.118 &     18.446 &
 14 &   29.748 &     30.520 \\
  5 &    7.383 &      8.995 &
 10 &   19.060 &     20.849 &
 15 &   33.256 &     32.948 \\
  6 &    9.184 &     11.319 &
 11 &   22.359 &     23.259 &
 16 &   35.277 &     35.378
\end{array}
\]
%
%
We note that~\eqref{eq:sqrtlDK+1} holds also for $\nK=15$. By~\eqref{eq:Phie}, \eqref{eq:rlDK+1}
and~\eqref{eq:sqrtlDK+1}, the first claim is proved for $\nK\geq 17$ or $\lDK\geq \max(4,\LC_0(\nK))$ (in
an even stronger form, because now we have $\log \lDK$ instead of $\log(7\lDK)$).\\
To complete the proof and to extend the claim to $2\leq \nK\leq 16$ and $\lDK\leq \max(4,\LC_0(\nK))$ we
use a different strategy. Let
\[
\Ef(n,t) := \frac{1}{2}(t^{1/2} - t^{-1/2})
          - \log t
          + \Big(\g
               + \log 2\pi
               -  \frac{\log t}{t-1}
               +  \log\big(1-t^{-1}\big)\Big)n,
\]
for $n>0$ and $t>1$. It is the function such that
\[
r(\Ef(n,t),n,t) = \sqrt{t},
\]
hence, if $\lDK = \Ef(\nK,T_0)$, then by~\eqref{eq:Phie} $T_0$ is an upper-bound for $\TeK$; note that
this corresponds to the case $a=L=\log T$ in Theorem~\ref{theo:Phie}. Observe that $\Ef$ is increasing as
a function of $t$, and that it diverges to $-\infty$ and to $+\infty$ for $t\to 1^-$ and $t\to +\infty$,
respectively, for every fixed $n$. As a consequence, for given $\nK$ and $\lDK$ there is a unique $T_0$
such that $\Ef(\nK,T_0)=\lDK$, and this $T_0$ is also an upper-bound for $\TeK$.\\
%
%
%
Thus, for $2\leq \nK\leq 16$ (only a finite set of cases) and $\lDK\leq \max(4,\LC_0(\nK))$ (a bounded
range for $\lDK$) we set $T_0$ such that $\lDK=\Ef(\nK,T_0)$ and we directly check that
\[
\frac{1}{2\sqrt{T_0}}
+ \Big(\frac{\log T_0}{T_0-1} - \log(1-T_0^{-1})\Big)\nK
\leq 1
  +  \log\Big(\frac{\Ef(\nK,T_0)}{T_0}\Big)
  +  (\nK+1)\frac{\log(7\Ef(\nK,T_0))}{\Ef(\nK,T_0)}
\]
%
%
which is equivalent to
\[
\frac{1}{2}
\sqrt{T_0} \leq \lDK
             +  \llDK
             -  (\g+\log 2\pi)\nK
             +  1
             +  (\nK+1)\frac{\log(7\lDK)}{\lDK}.
\]
(Note that now $\log(7\lDK)$ appears, as in the claim.)

For the second claim of the theorem, we use~\eqref{eq:Phielong} still with $T_0=\lDK+1$. We compute a
lower bound for the sum of prime ideals choosing two prime ideals $\pG_0$ and $\pG_1$ above respectively
$2$ and $3$. We get
\begin{align}
\sum_{\N\pG^m\leq T_0}\Big(\frac{1}{\N\pG^m}-\frac{1}{T_0}\Big)\log\N\pG
&\geq \Big(\frac{1}{\N\pG_0}-\frac{1}{T_0}\Big)\log\N\pG_0
   +  \delta_{\nK,2}\Big(\frac{1}{\N\pG_1}-\frac{1}{T_0}\Big)\log\N\pG_1
\notag
\intertext{where $\delta_{\nK,2}$ is $1$ if $\nK=2$ and $0$ otherwise. Note that this holds in any case
because if $\pG_0$ or $\pG_1$ does not appear in the original sum, then the chosen lower bound is
negative. In its turn this is}
&\geq \nK(\log 2)\Big(\frac{1}{2^{\nK}}-\frac{1}{T_0}\Big)
   +  \nK(\log 3)\delta_{\nK,2}\Big(\frac{1}{3^{\nK}}-\frac{1}{T_0}\Big),\label{eq:lowerboundsum}
\end{align}
because the inert case gives the least contribution. Since $\max(4,\LC_0(\nK))\leq \nK2^{\nK}$ for all
$\nK\leq 16$, \eqref{eq:rlDK+1} holds if $\lDK\geq \nK2^{\nK}$. Hence to prove the second claim it is
sufficient to prove that if $\lDK\geq \nK2^{\nK}$, then
\begin{multline}\label{eq:restorlDK+1}
(\nK+1)\frac{\log\lDK}{\lDK}                                                 \\
\leq 2\nK(\log 2)\Big(\frac{1}{2^{\nK}}-\frac{1}{\lDK+1}\Big)
  +   \nK(\log 3)\delta_{\nK,2}\Big(\frac{1}{3^{\nK}}-\frac{1}{\lDK+1}\Big)
\end{multline}
and
\[
\sqrt{\lDK+1} \leq 2(\lDK
                     + \llDK
                     - (\g+\log 2\pi)\nK
                     + 1).
\]
The second statement is elementary and is true for any $\nK\geq 2$.
%
%
For~\eqref{eq:restorlDK+1}, we observe that the left-hand side is decreasing in $\lDK$ while the
right-hand side is increasing, hence it is sufficient to verify it with $\lDK$ substituted by
$\nK2^{\nK}$. One can see that it is true for $\nK\geq 7$ and for $2\leq \nK\leq 6$ and $\lDK\geq
\LC_1(\nK)$ as indicated in table below
\[
\begin{array}{rrr}
\nK & \nK2^{\nK} & \LC_1(\nK) \\
\hline
2 &   8 &  15.670 \\
3 &  24 &  35.173 \\
4 &  64 &  78.801 \\
5 & 160 & 174.859 \\
6 & 384 & 384.395
\end{array}
\]
%
%
To fill the gap, we use~\eqref{eq:Phielong}, \eqref{eq:lowerboundsum} and $T_0=\lDK+7$.
%
%
\end{proof}
\begin{rem*}
The first claim is somehow the best we can hope from~\eqref{eq:Phie}. Indeed the optimal $T_0$
for~\eqref{eq:Phie} is such that
\[
\lDK = T_0 - (\nK+1)\log T_0 + \Big(\g+\log 2\pi - 2\frac{\log T_0}{T_0-1} + \log(1-T_0^{-1})\Big)\nK - 1
\]
for all but a finite number of fields of degree $\nK\leq 22$. Using this formula, one checks that the best
bound we can get from~\eqref{eq:Phie} is
\[
2(\lDK + \llDK - (\g+\log 2\pi)\nK + 1 + \epsilon(\lDK))
\]
where $\epsilon(\lDK)\sim (\nK+1)\llDK/\lDK$ for $\lDK\to\infty$ and fixed $\nK$.
\end{rem*}

\begin{rem*}
Using the full strength of~\eqref{eq:Phielong}, one can prove that for quadratic fields the second claim
of Theorem~\ref{theo:Teasynt} is true for $\TCK$ also for $\lDK\leq \nK2^{\nK}=8$ with only the four
exceptions $\QM[\sqrt{-15}]$, $\QM[\sqrt{-5}]$, $\QM[\sqrt{-23}]$ and $\QM[\sqrt{-6}]$ (for which
$\TCK=2$) and the ten fields of discriminant in $[-11,13]\cup\{-19\}$ (for which the class group is
trivial).
\end{rem*}
We now prove that, for fixed $\nK$, the absolute upper bound for $\TCK/\lDKsq$ is near $4$ and that the
asymptotic limit $4\lDKsq$ is true for a very large set of fields.
\begin{coro}\label{coro:2}
We have
\[
\TCK\leq \TeK \leq 4\big(1+\big(2\pi e^{\g}\big)^{-\nK}\big)^2\lDKsq.
\]
Moreover,
\[
\text{if}\quad
\lDK\leq \frac{1}{e}\big(2\pi e^{\g}\big)^{\nK}\quad\text{then}\quad
\TCK\leq \TeK\leq 4\lDKsq.
\]
\end{coro}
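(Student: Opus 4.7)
The plan is to extract both assertions from Theorem~\ref{theo:Teasynt} via a single elementary observation. Write $u:=(2\pi e^{\g})^{\nK}$, so that $(\g+\log 2\pi)\nK=\log u$. The central observation is that the function $v\mapsto \log v+\lDK/v$ is strictly convex on $(0,\infty)$ with unique minimum $\llDK+1$ attained at $v=\lDK$. Specialising at $v=u$ gives
\[
\llDK-\log u+1 \;\leq\; \frac{\lDK}{u},
\]
which converts the bound $\sqrt{\TeK}\leq 2(\lDK+\llDK-\log u+1)$ of Theorem~\ref{theo:Teasynt} into the announced shape $\sqrt{\TeK}\leq 2(1+u^{-1})\lDK$. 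For the moreover clause, the hypothesis $\lDK\leq u/e$ is literally the strictly stronger statement $\llDK+1\leq \log u$, i.e.\ $\llDK-\log u+1\leq 0$.

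In the range $\lDK\geq \nK 2^{\nK}$ the correction-free second conclusion of Theorem~\ref{theo:Teasynt} applies; combining it with either of the two displayed inequalities yields $\sqrt{\TeK}\leq 2(1+u^{-1})\lDK$ in general, and $\sqrt{\TeK}\leq 2\lDK$ when the additional hypothesis $\lDK\leq u/e$ is assumed. A short computation shows $\nK 2^{\nK}\leq u/e$ for every $\nK\geq 2$ (it reduces to $(\pi e^{\g})^{\nK}\geq e\nK$, clear by exponential growth), so this argument simultaneously settles the moreover clause on $\lDK\in[\nK 2^{\nK},u/e]$ and the first claim for every $\lDK\geq \nK 2^{\nK}$. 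The remaining range of the first claim, $\lDK<\nK 2^{\nK}$, automatically lies in $\lDK\leq u/e$, hence it will reduce to the moreover clause via the trivial $4\lDKsq\leq 4(1+u^{-1})^2\lDKsq$ once that clause is established.

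Thus the only genuinely residual work is the moreover clause for $\lDK<\nK 2^{\nK}$, a finite family of small-discriminant cases for each small $\nK$. The plan here mirrors the last paragraphs of the proof of Theorem~\ref{theo:Teasynt}: use the sharper form~\eqref{eq:Phielong} of Theorem~\ref{theo:Phie} with $T_0=\lDK+7$, inject the contribution of the rational primes $2$ (and $3$ when $\nK=2$) via inequality~\eqref{eq:lowerboundsum}, and reduce to a monotonicity-in-$\lDK$ check leaving only finitely many $(\nK,\lDK)$ pairs for direct verification. The main obstacle is precisely this last step: the required inequality is tight near the boundary $\lDK=u/e$, so the prime-sum improvement in~\eqref{eq:Phielong} is essential to close the gap, and a small table in the style of the $\LC_0(\nK)$ and $\LC_1(\nK)$ tables will be needed to dispatch the final exceptional fields.
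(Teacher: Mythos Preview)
Your convexity observation $\llDK+1=\min_{v>0}(\log v+\lDK/v)$, specialised at $v=u:=(2\pi e^{\g})^{\nK}$, is exactly what justifies the paper's bare assertion that ``the second statement of Theorem~\ref{theo:Teasynt} already proves both statements in the remaining ranges'' ($\lDK\geq\nK 2^{\nK}$). So for that range your argument is correct and in fact makes explicit a step the paper leaves to the reader.

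Where you go astray is the small range $\lDK<\nK 2^{\nK}$. You propose to drop back to~\eqref{eq:Phielong} with $T_0=\lDK+7$, inject the prime-sum~\eqref{eq:lowerboundsum}, and build a table analogous to the $\LC_1(\nK)$ table. This is unnecessary and your diagnosis is off: in that range we are nowhere near the boundary $\lDK=u/e$ (already for $\nK=2$ one has $\nK 2^{\nK}=8$ versus $u/e\geq 46$), so no prime-sum improvement is needed. The paper instead uses the \emph{first} statement of Theorem~\ref{theo:Teasynt}, valid for all $\K$, and simply checks that
\[
f(\lDK,\nK):=\llDK+1-(\g+\log 2\pi)\nK+(\nK+1)\frac{\log(7\lDK)}{\lDK}\leq 0
\]
on $\lDK\leq\nK 2^{\nK}$. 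Since $\partial f/\partial\Ell$ changes sign once, this reduces to evaluating $f$ at the two endpoints, which dispatches all $\nK\geq 3$ at once; for $\nK=2$ the correction term blows up for very small $\lDK$ (e.g.\ $\lDK\approx\log 3$), and a direct appeal to~\eqref{eq:Phie} with $T_0=2\lDK$ handles the short interval $\lDK\leq 8$. No prime ideals, no tables. Your plan would likely also succeed, but it re-derives work already packaged inside Theorem~\ref{theo:Teasynt} and leaves the residual numerical verification unperformed.
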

Notice that $2\pi e^\gamma> 11.19$.
\begin{proof}
It is sufficient to prove that $\TeK\leq 4\lDKsq$ if $\lDK\leq \nK2^{\nK}$, because the second statement
of Theorem~\ref{theo:Teasynt} already proves both statements in the remaining ranges.

The right-hand side of the first statement of Theorem~\ref{theo:Teasynt} is $2\lDK+2f(\lDK,\nK)$ with
\[
f(\Ell,n) := \log \Ell
           + 1
           - (\g+\log 2\pi)n
           + (n+1)\frac{\log(7\Ell)}{\Ell}.
\]
We just need to check that $f(\lDK,\nK)\leq 0$ if $\lDK\leq \nK2^{\nK}$. As a function of $\Ell\geq
1$, for fixed $n\geq 2$, $\frac{\partial f}{\partial \Ell}=\frac{n+1}{\Ell}\big(\frac{1}{n+1}
-\frac{\log(7\Ell)-1}{\Ell}\big)$ is negative then positive,
%
%
hence to check that $f$ is negative, it is sufficient to check its value for the minimum and the maximum
$\Ell$ we are interested in. We have $f(n 2^n,n)<0$ for any $n\geq 2$ and $f(\log 23,n)<0$ for any
$n\geq 3$.
%
%
Thus $f(\lDK,\nK)<0$ for any field of degree $\nK\geq 3$. For quadratic fields, we come back
to~\eqref{eq:Phie} with $T_0=2\lDK$ to directly check that $\TeK\leq 4\lDKsq$ if $\lDK\leq
\nK 2^{\nK}=8$.
\end{proof}

\begin{rem*}
The upper bound for the quotient $\TeK/\lDKsq$ tends obviously very fast to $4$. For instance, for
$\nK\geq 10$, we have $\TCK\leq \TeK\leq (4+2.6\cdot 10^{-10})\lDKsq$
%
%
-- but notice that the second claim in Corollary~\ref{coro:2} shows that $\TCK\leq \TeK\leq 4\lDKsq$ if
$\DK\leq \exp(10^{10})$.
%
%
\end{rem*}
For a much tighter range of discriminants one can prove bounds of the form $\TeK\leq c\lDKsq$ with
$c<4$. For instance, we have the psychologically important bound $\TCK\leq \TeK\leq \lDKsq$ as soon as
$$\lDK+2\llDK+2+2(\nK+1)\frac{\log(7\lDK)}{\lDK}\leq 2(\g+\log 2\pi)\nK.$$
For a given degree $\nK\geq 4$, this happens for $\DK$ lower than a certain limit. As $\nK$ goes to
infinity, the limit corresponds to a root-discriminant tending to $(2\pi e^{\g})^2=125.23\dots$.
%
%
%
There are infinitely many fields satisfying this condition. Indeed, consider the field
$F=\QM[\cos(2\pi/11),\sqrt{2},\sqrt{-23}]$. Martinet~\cite{Martinet:tours} proved that the Hilbert
class field tower of $F$ is infinite because $F$ satisfies Golod--Shafarevich's condition. Since
%
$n_F=20$ and $\log\Delta_F\leq 90.6$, this shows that there is an infinite number of fields $\K$ such that
$\lDK\leq 4.53\nK$.
For one of those fields, we have $\TCK\leq \TeK\leq \lDKsq$ if $\nK\geq 47$ and the quotient improves
when the degree increases, with $\limsup\{\TeK/\lDKsq\colon\allowbreak \lDK\leq 4.53\nK\}\leq 0.88$.\\
%
%
As a second example, consider $F=\QM[x]/(f)$, where $f=x^{10} + 223x^8 + 18336x^6 + 10907521x^4 +
930369979x^2 + 18559139599$. Hajir and Maire~\cite{HajirMaire:asymptotically-good} proved that the Hilbert
class field tower of $F$ is infinite because $F$ satisfies Golod--Shafarevich's condition. Since
$\log\Delta_F\leq 44.4$, this shows that there is an infinite number of fields $\K$ such that $\lDK\leq
4.44\nK$.
%
%
For one of those fields, we have $\TCK\leq \TeK\leq \lDKsq$ if $\nK\geq 34$ with $\limsup\{\TeK/\lDKsq
\colon \lDK\leq 4.44\nK\}\leq 0.84$.\\
%
%
\enlargethispage{-\baselineskip}
As a third example, consider the field $F=\QM[x]/(f)$, where $f=x^{12} + 339x^{10} - 19752x^8 -
2188735x^6 + 284236829x^4 + 4401349506x^2 + 15622982921$. In~\cite{HajirMaire:TamelyRamifiedTowers}, the
authors proved that $F$ admits an infinite tower of extensions ramified at most above a single prime ideal
of $F$ of norm $9$. Since $\log(9\Delta_F)/12\leq 4.41$, there is an infinite number of fields $\K$ such
that $\lDK\leq 4.41\nK$.
%
%
For one of those fields, we have $\TCK\leq \TeK\leq \lDKsq$ if $\nK\geq 32$ with $\limsup\{\TeK/\lDKsq
\colon \lDK\leq 4.41\nK\}\leq 0.82$.\\
%
%
Assuming \GRH, Serre~\cite{Serre:discriminants} proved that there are only finitely many
fields such that $\lDK\leq c\nK$ for every $c<\g+\log 8\pi$. Suppose that $\lDK\leq (\g+\log 8\pi)\nK$,
then $\TCK\leq \TeK\leq \lDKsq$ if $\nK\geq 11$. Serre's result does not rule out the possibility that
there are infinitely many such fields; in this case $\limsup \{\TeK/\lDKsq\colon \lDK\leq
(\g+\log 8\pi)\nK\} \leq \big(\frac{4\log 2}{\g+\log 8\pi}\big)^2 \leq 0.54$.
%
%

\begin{coro}\label{coro:Bach4.01}
Assume \GRH. Then
\[
\TCK\leq \TeK\leq 4.01\lDKsq.
\]
\end{coro}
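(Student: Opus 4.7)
The plan is to combine the two statements of Corollary~\ref{coro:2}, with a refined argument for the quadratic case.

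The first part of Corollary~\ref{coro:2} gives $\TeK \leq 4(1+(2\pi e^{\g})^{-\nK})^2\,\lDKsq$, and the bracketed quantity is decreasing in $\nK$. A direct numerical check at $\nK=3$ shows $(2\pi e^{\g})^3 > 1400$, hence the constant is bounded by $4(1+1400^{-1})^2 < 4.006$, which is already $\leq 4.01$. Thus for every field of degree $\nK\geq 3$ the claim follows immediately from Corollary~\ref{coro:2}.

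What remains is the case $\nK=2$, where Corollary~\ref{coro:2} only yields the insufficient constant $4(1+(2\pi e^{\g})^{-2})^2\approx 4.064$. I would split the quadratic range into two subcases. When $\lDK \leq e^{-1}(2\pi e^{\g})^2 \approx 46$, the second part of Corollary~\ref{coro:2} gives $\TeK \leq 4\,\lDKsq$, which is a fortiori below $4.01\,\lDKsq$. When $\lDK$ is beyond some larger threshold, one squares the second claim of Theorem~\ref{theo:Teasynt}, reducing the desired inequality to $\llDK \leq 2(\g+\log 2\pi)-1+0.00125\,\lDK$; this holds once $\lDK$ is large enough (crossover around $\lDK\sim 3500$).

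The obstacle is the intermediate band $\lDK\in(46,\lDK_0)$ for quadratic fields, where the margin between $4.064$ and $4.01$ is very tight. Here I would apply the sharper form~\eqref{eq:Phielong} of Theorem~\ref{theo:Phie}, optimizing $T_0$ as a function of $\lDK$ and, crucially, retaining a non-trivial lower bound on the sum $\sum_{\N\pG^m\leq T_0}(1/\N\pG^m-1/T_0)\log\N\pG$. Since a universal bound requires the worst-case assumption that each small rational prime $p$ is inert in $\K$ (contributing $2\log p/p^2$), I would extend the bound~\eqref{eq:lowerboundsum} used in the proof of Theorem~\ref{theo:Teasynt} to include primes beyond $\{2,3\}$, namely $5,7,11,\ldots$ up to $\sqrt{T_0}$. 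Once multiplied by the factor $4/(1-T_0^{-1})^2$, this correction exceeds the residual gap. The tightness forces a careful verification: either an analytic estimate uniform over the interval, or a finite numerical sweep on the bounded range of $\lDK$, is needed to confirm that the optimization delivers $\sqrt{\TeK}\leq\sqrt{4.01}\,\lDK$ throughout.
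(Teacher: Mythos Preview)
Your reduction to the quadratic case with $\lDK>46$ is exactly how the paper begins, and your idea for that remaining case---use~\eqref{eq:Phielong} and lower-bound the prime-ideal sum by pretending every rational prime is inert---is also the paper's idea. So the approach is correct, but you have made it harder than necessary.

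The detour through Theorem~\ref{theo:Teasynt} for $\lDK\gtrsim 3500$, and the proposal to ``optimize $T_0$ as a function of $\lDK$'' with a numerical sweep over the intermediate band, are both avoidable. The point you are missing is that a \emph{single} value of $T_0$ handles all $\lDK\geq 46$ at once. Rewrite~\eqref{eq:Phielong} as
\[
\sqrt{\TeK}\leq \max\Bigl(\sqrt{T_0},\ \tfrac{2}{1-T_0^{-1}}\bigl(\lDK+f_\K(\nK,T_0)\bigr)\Bigr),
\]
where $f_\K$ absorbs the $\log T_0$ term, the archimedean constants, and the (negative of the) prime sum. The quantity $f_\K(2,T_0)$ does not depend on $\lDK$ at all. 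If one can pick $T_0$ with $4/(1-T_0^{-1})^2\leq 4.01$ (i.e.\ $T_0\gtrsim 800$) and simultaneously $f_\K(2,T_0)\leq 0$, then $\sqrt{\TeK}\leq \tfrac{2}{1-T_0^{-1}}\lDK$ for every $\lDK\geq\tfrac12(T_0^{1/2}-T_0^{-1/2})$, and the corollary follows uniformly. The paper takes $T_0=935$: the inert lower bound on the prime sum (your extension of~\eqref{eq:lowerboundsum} to all $p\leq\sqrt{T_0}$) gives $f_\K(2,935)\leq -0.02$, yielding $\TeK\leq 4.0086\,\lDKsq$ for all $\lDK\geq 15.3$, hence in particular for $\lDK\geq 46$. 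No sweep, no optimization, no separate treatment of large $\lDK$.
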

\begin{proof}
For $\nK\geq 3$ or $\nK=2$ and $\lDK\leq (2\pi e^{\g})^{\nK}/e$, the claim follows from
Corollary~\ref{coro:2}.\\
%
%
For $\nK=2$ and $\lDK\geq (2\pi e^{\g})^{\nK}/e$ we apply a different argument. Let
\[
f_\K(n,t) = \log t
          - \Big(\g
               + \log 2\pi
               -  \frac{\log t}{t-1}
               +  \log\big(1-t^{-1}\big)\!\Big)n
          - \frac{2}{1-t^{-1}}\sum_{\N\pG^m\leq t}\Big(\frac{1}{\N\pG^m}-\frac{1}{t}\Big)\log\N\pG,
\]
so that~\eqref{eq:Phielong} can be written as
\[
\sqrt{\TeK} \leq \max\Big(\sqrt{T_0},
                          \frac{2}{1-T_0^{-1}}(\lDK + f_\K(\nK,T_0))
                     \Big).
\]
Suppose we have a $T_0$ such that $f_\K(\nK,T_0)\leq 0$, then we have
\[
\sqrt{\TeK} \leq \max\Big(\sqrt{T_0},
                          \frac{2}{1-T_0^{-1}}\lDK
                     \Big),
\]
and hence
\[
\TeK \leq \frac{4}{(1-T_0^{-1})^2}\lDKsq
\]
if $\lDK\geq\frac{1}{2}(T_0^{1/2}-T_0^{-1/2})$. Recalling that $\nK=2$, we choose $T_0=935$:
%
in this case
\[
f_\K(2,935)\leq 2 - \frac{935}{467}\sum_{\N\pG^m\leq 935}\Big(\frac{1}{\N\pG^m}-\frac{1}{935}\Big)\log\N\pG.
\]
The value of the sum on prime ideals depends on $\K$, but it is always larger than what we get
assuming that all primes are inert. This gives
\[
f_\K(2,935)\leq 2 - \frac{935}{467}\sum_{p^{2m}\leq 935}\Big(\frac{1}{p^{2m}}-\frac{1}{935}\Big)\log(p^2)
\leq -0.02
\]
which therefore produces $\TeK\leq 4.0086\lDKsq$ for $\lDK\geq 15.3$. The proof is complete because
$(2\pi e^{\g})^2/e\geq 46$.
\end{proof}
\subsection{Lower bound for $\Te(\K)$}
\begin{prop}\label{prop:3.9}
Assume \GRH. Then
\[
\sqrt{\TeK} \geq (1 + o(1))\frac{\lDK}{\nK}.
\]
\vspace{-1.5\baselineskip}
\end{prop}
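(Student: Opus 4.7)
The plan is to analyse the inequality~\eqref{theoeq} applied to $F=F_\e$ at $T=T_e(\K)$, with $a\in(0,L]$ the value witnessing~\eqref{theoeq}, and to extract the required asymptotic by bounding both sides. I will upper-bound the left-hand side of~\eqref{theoeq} via a reduction to the Riemann zeta case through Weil's Explicit Formula, and lower-bound the right-hand side by isolating the $F_\e(0)\lDK$ main term; direct comparison then yields the claimed bound.

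For the upper bound on the LHS, I first group prime ideals above each rational prime $p$, using $\sum_{\pG\mid p}f_\pG\le\nK$, to get
\[
2\sum_\pG\log\N\pG\sum_{m\ge 1}\frac{F_\e(m\log\N\pG)}{\N\pG^{m/2}}
\;\le\; 2\nK\sum_p\log p\sum_{m\ge 1}\frac{F_\e(m\log p)}{p^{m/2}}.
\]
The rational inner sum I then estimate by applying Weil's Explicit Formula~\eqref{WEF} to $\K=\QM$ (so $\lDK=0$, $\nK=1$, $r_1=1$); exploiting $\sum_\g\widehat{\Phie}(\g)^2\ge 0$ this gives
\[
2\sum_p\log p\sum_{m\ge 1}\frac{F_\e(m\log p)}{p^{m/2}}
\;\le\; 4\int_0^{+\infty}F_\e(x)\cosh(x/2)\d x+O(T).
\]
The leading integral I compute exactly from $F_\e=\Phie\ast\Phie$ along the lines of the proof of Proposition~\ref{prop:eqzero-simplifiee}, obtaining $2T^{3/2}(1-e^{-a})^2+4aT(1-e^{-a})+2a^2\sqrt T$. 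Consequently the LHS of~\eqref{theoeq} is at most $2\nK T^{3/2}(1-e^{-a})^2(1+o(1))$.

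For the RHS, Remark~\ref{rem:Fe} yields $F_\e(0)=2(1-e^{-a})T+O(L\sqrt T)$, while bounding $F_\e$ pointwise by $O(T)$ gives $\Iint(F_\e),\Jint(F_\e)=O(T)$. Since $r_1\le\nK$ and we are in the regime $\lDK/\nK\to+\infty$, the dominant contribution to the RHS is $F_\e(0)\lDK=2(1-e^{-a})T\lDK(1+o(1))$. Substituting both estimates into~\eqref{theoeq} and cancelling the common positive factor $2(1-e^{-a})T$ then gives
\[
\nK\sqrt T\,(1-e^{-a})(1+o(1))\;>\;\lDK,
\]
that is $\sqrt T\ge(1+o(1))\lDK/\bigl(\nK(1-e^{-a})\bigr)\ge(1+o(1))\lDK/\nK$, which is the announced lower bound for $\sqrt{T_e(\K)}$.

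The main obstacle I expect is the careful verification of the $O(T)$ errors uniformly in $a$, in particular of $\Iint(F_\e)$ and $\Jint(F_\e)$, whose explicit forms split into different cases according to whether $a\le L/2$ or $a>L/2$ since the piecewise description of $F_\e$ in Remark~\ref{rem:Fe} changes accordingly. The algebraic heart of the argument, however, is the cancellation of the $(1-e^{-a})$ factors between the LHS bound $(1-e^{-a})^2$ and the RHS main term $(1-e^{-a})$: the surviving factor $(1-e^{-a})\le 1$ is what makes the final bound on $\sqrt T$ uniform in, and independent of, the minimising $a$.
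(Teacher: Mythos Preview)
Your overall strategy is sound and parallels the paper's: both bound the prime-ideal sum by $\nK$ times the rational-prime sum via $\tilde\Lambda_\K(n)\le\nK\Lambda(n)$, then control the latter under RH. You invoke the explicit formula for $\zeta$; the paper instead uses $|\psi(x)-x|\le 2\sqrt x\log^2 x$ and partial summation---these are equivalent devices.

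There is, however, a real gap in the uniformity in $a$, which you correctly flag as the main obstacle but do not resolve. Your $O(T)$ error in the $\QM$-explicit-formula step (namely $\Iint(F_{\e})-\Jint(F_{\e})-F_{\e}(0)(\g+\log 8\pi)$) is compared against the main term $(1-e^{-a})^2T^{3/2}$. If the value of $a$ witnessing~\eqref{theoeq} were very small, say $a=o(T^{-1/4})$, then $(1-e^{-a})^2T^{3/2}=o(T)$ and your asserted $(1+o(1))$ would not be justified. (Incidentally, a pointwise bound $F_{\e}=O(T)$ alone does not yield $\Iint(F_{\e})=O(T)$, since $\int_0^\infty\frac{\d x}{2\sinh(x/2)}$ diverges at $0$; one needs a Lipschitz estimate on $F_{\e}$ near $0$.) The paper handles this differently: its error terms, coming from $\int_0^\infty|2F_{\e}'-F_{\e}|x^2\,\d x$ and $F_{\e}(\log 2)$, are bounded by explicit multiples of $F_{\e}(0)$, which automatically scales with $a$.

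Your route repairs itself elegantly once you notice a cancellation. The terms $\Iint(F_{\e})$, $\Jint(F_{\e})$, $F_{\e}(0)(\g+\log 8\pi)$ emerging from the explicit formula for $\QM$ are the \emph{same} functionals appearing on the right of~\eqref{theoeq}. Comparing your upper bound for the left of~\eqref{theoeq} with its right side, the $\Iint(F_{\e})\nK$ and $F_{\e}(0)(\g+\log 8\pi)\nK$ contributions cancel identically, and only $-(\nK-r_1)\Jint(F_{\e})\le 0$ survives, leaving the clean inequality
\[
4\nK\int_0^{+\infty}F_{\e}(x)\cosh\Big(\frac{x}{2}\Big)\d x \;>\; F_{\e}(0)\,\lDK.
\]
Using $4\int_0^{+\infty}F_{\e}\cosh=2\bigl((1-e^{-a})T^{3/4}+aT^{1/4}\bigr)^2$ and $F_{\e}(0)\ge 2(1-e^{-a})T$, this gives
\[
\nK\Big[(1-e^{-a})\sqrt T+2a+\frac{a^2}{(1-e^{-a})\sqrt T}\Big]>\lDK,
\]
and since $(1-e^{-a})\le 1$, $a\le L$ and $a^2/(1-e^{-a})\le 2\max(1,L^2)$, one obtains $\nK\sqrt T\,(1+O(L^2/\sqrt T))>\lDK$ uniformly in $a$, which is the claim. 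With this observation your argument is in fact slightly slicker than the paper's.
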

\begin{proof}
Let $S(T)$ denote the Dirichlet series appearing on the left-hand side of~\eqref{theoeq}. Then,
introducing the generalized von Mangoldt function $\tilde{\Lambda}_\K(n) := \sum_{\N\pG^m=n}\log\N\pG$ we get
\begin{equation}\label{eq:1a}
S(T)
= \sum_{n} \frac{2F_{\e}(\log n)}{\sqrt{n}}\tilde{\Lambda}_\K(n).
\end{equation}
Using $\tilde{\Lambda}_\K(n)\leq \nK\Lambda(n)$ in~\eqref{eq:1a} and introducing Stieltjes' integral
notation, we have
\begin{equation*}
\frac{S(T)}{\nK}
\leq \int_{2^-}^{+\infty}  \frac{2F_{\e}(\log x)}{\sqrt{x}}\d\psi(x).
\end{equation*}
Let $g(x)=\frac{2F_{\e}(\log x)}{\sqrt{x}}$ and notice that it is a continuous function which is derivable
except at most in $T^{\pm1}$ and $(Te^{-2a})^{\pm1}$ with a derivative which is continuous where it exists
and bounded. Thus, with a partial integration we get:
\begin{align*}
\frac{S(T)}{\nK} &
 \leq -\int_{2}^{+\infty} g'(x)\psi(x)\d x
 \leq -\int_{2}^{+\infty} g'(x)x\d x
   +   \int_{2}^{+\infty} |g'(x)||\psi(x)-x|\d x                                                   \\
&  =   \int_{2}^{+\infty} g(x)\d x
   +   \int_{2}^{+\infty} |g'(x)||\psi(x)-x|\d x
   +   2g(2).
\intertext{Since under RH $|\psi(x) - x|\leq 2\sqrt{x}\log^2x$ for every $x\geq 2$ (Schoenfeld proved that
RH implies $|\psi(x) -x|\leq \frac{1}{8\pi}\sqrt{x}\log^2x$ as soon as $x\geq 74$, a direct computation
shows that inequality for the intermediate range $x\in[2,74]$) we get}
%
%
&\leq \int_{2}^{+\infty}  g(x)\d x
   +  2\int_{2}^{+\infty} |g'(x)|\sqrt{x}\,\log^2 x\d x
   +  3F_{\e}(\log 2)                                                                              \\
&  =  2\int_{2}^{+\infty} \frac{F_{\e}(\log x)}{\sqrt{x}}\d x
   +  2\int_{2}^{+\infty}\Big|\Big(\frac{F_{\e}(\log x)}{\sqrt{x}}\Big)'\Big|\sqrt{x}\,\log^2x\d x
   +  3F_{\e}(\log 2)                                                                              \\
&  =  2\int_{\log 2}^{+\infty} F_{\e}(x)e^{x/2}\d x
   +   \int_{\log 2}^{+\infty}  |2F_{\e}'(x)-F_{\e}(x)|x^2\d x
   +  3F_{\e}(\log 2).
\end{align*}
We extend the range of the integrals, getting
\begin{equation}\label{eq:1b}
\frac{S(T)}{\nK}
\leq 2\int_{0}^{+\infty} F_{\e}(x)e^{x/2}\d x
   +  \int_{0}^{+\infty} |2F_{\e}'(x)-F_{\e}(x)|x^2\d x
   + 3F_{\e}(\log 2).
\end{equation}
We notice that
\begin{equation}
\int_{0}^{+\infty} F_{\e}(x)e^{x/2}\d x
\leq \int_{\RM} F_{\e}(x)e^{x/2}\d x
 =    \Big(\int_{\RM} \Phie(x)e^{x/2}\d x\Big)^2.                                     \label{eq:2b}
\end{equation}
We observe that, since $\Phie\geq 0$, $\max F_{\e} = F_{\e}(0)$ and that the non-negative part of the
support of $F_{\e}$ is included in $[0,a]\cup[L-2a,L]$ (the intervals may overlap) hence
\begin{align}
\int_{0}^{+\infty}|F_{\e}(x)|x^2\d x
 &= \int_{0}^{L} F_{\e}(x)x^2\d x
 \leq F_{\e}(0)\Big(\int_{0}^a x^2\d x + \int_{L-2a}^{L} x^2\d x\Big) \notag\\
 &  =  F_{\e}(0)\big(2aL^2-4a^2L + 3a^3\big)
  \leq 2aF_{\e}(0)L^2.                                             \label{eq:3b}
\end{align}
Moreover, from Remark~\ref{rem:Fe}, we see that the function is piecewise of the form
$(ax+b)e^{x/2}+(cx+d)e^{-x/2}$, with $a$, $b$, $c$ and $d$ constants, with at most four pieces. Deriving
the expression we find that it can have at most three variations in each piece. The total variation of
$F_{\e}$ on $[0,L]$ is thus at most $12\max F_{\e}=12F_{\e}(0)$. It follows that
\begin{equation}\label{eq:4b}
\int_{0}^{+\infty}|F_{\e}'(x)|x^2\d x
\leq 12F_{\e}(0)L^2.
\end{equation}
Plugging~\eqref{eq:2b},~\eqref{eq:3b} and \eqref{eq:4b} into~\eqref{eq:1b} we get
\begin{align*}
\frac{S(T)}{\nK}
&\leq 2\Big(\int_{\RM} \Phie(x)e^{x/2}\d x\Big)^2
   +  2(a+12)F_{\e}(0)L^2
   +  3F_{\e}(0)                                          \\
&  =  2\Big(e^{-a}T^{3/4}\int_{0}^a e^{x}\d x
          + T^{-1/4}\int_{0}^a \d x\Big)^2
   +  2(a+12)F_{\e}(0)L^2
   +  3F_{\e}(0)                                          \\
&  =  2\Big(\big(1-e^{-a}\big)T^{3/4} + aT^{-1/4}\Big)^2
   +  2(a+12)F_{\e}(0)L^2
   +  3F_{\e}(0)                                          \\
&  =  2\big(1-e^{-a}\big)^2T^{3/2}
   +  4a\big(1-e^{-a}\big)T^{1/2}
   +  a^2T^{-1/2}
   +  2(a+12)F_{\e}(0)L^2
   +  3F_{\e}(0).
\end{align*}
We have $\Jint(F_{\e})\leq\frac{\pi}{2}F_{\e}(0)$ hence in order to satisfy~\eqref{theoeq} we must have
\begin{multline*}
2\big(1-e^{-a}\big)^2T^{3/2}
   + 4a\big(1-e^{-a}\big)T^{1/2}
   + a^2T^{-1/2}
   +  2(a+12)F_{\e}(0)L^2
   +  3F_{\e}(0)                                                                   \\
\geq \frac{S(T)}{\nK}
\geq F_{\e}(0)\Big(\frac{\lDK}{\nK} - \Big(\g + \log8\pi + \frac{\pi}{2}\Big)\Big)
\end{multline*}
that we simplify to
\[
(1+o(1))\sqrt{T}
\geq
\frac{F_{\e}(0)}{2\big(1-e^{-a}\big)^2T}
\Big(\frac{\lDK}{\nK}
   - 2(a+12)L^2
   + O(1)
\Big).
\]
Since $F_{\e}(0)=\int_{\RM}(\Phie(y))^2\d y\geq 2\int_{L/2-a}^{L/2}e^{L/2+y}\d y =2\big(1-e^{-a}\big)T$,
this requires
\[
\big(1+o(1)\big)^2\sqrt{T}
\geq \frac{1}{1-e^{-a}}
\Big(\frac{\lDK}{\nK}
   - 2(a+12)L^2
   + O(1)
\Big).
\]
In the given range for $a$, we can assume that the right-hand side is positive otherwise the claim is
evident. In that case the minimum for the main term is obviously $a=\log T$, hence we can assume that
\[
(1+o(1))\sqrt{T}
\geq \frac{1}{1-T^{-1}}
\Big(\frac{\lDK}{\nK}
    + O(\log^3\lDK)
\Big).
\]
The claim follows.
\end{proof}

\section{Upper bound for $T(\K)$}
Belabas, Diaz y Diaz and Friedman~\cite[Section~3]{small-generators} applied Theorem~\ref{theoKB} with
$F(x)=F_L(x):=(L-x)\chi_{[-L,L]}(x)=(\Phi\ast\Phi)(x)$, where $\Phi$ is the characteristic function of
$[-L/2,L/2]$, with $L=\log T$, $T>1$. (Actually they chose $F=\frac1L\Phi\ast\Phi$, but the difference
does not matter since Equation~\eqref{theoeq} is homogeneous). For this weight function, \eqref{theoeq}
reads
\begin{equation}\label{biribiri}
2\sum_{\substack{\pG,m\\\N\pG^m<T}}\frac{\log\N\pG}{\N\pG^{m/2}}\Big(1-\frac{\log\N\pG^m}{L}\Big)
    >
      \lDK
      - (\g+\log 8\pi)\nK
      + \frac{\Iint(F_L)}{L}\nK
      - \frac{\Jint(F_L)}{L}r_1.
\end{equation}
with
\begin{align*}
\Iint(F_L) &= \frac{\pi^2}2-4\dilog\left(\frac1{\sqrt T}\right)+\dilog\left(\frac1T\right)
\leq \frac{\pi^2}2
\intertext{and}
\Jint(F_L) &= \frac{\pi L}{2}-4C+4\Imm\dilog\left(\frac i{\sqrt T}\right)
\geq \frac{\pi L}{2}-4C,
\end{align*}
where $\dilog x=-\int_0^x\frac{\log(1-u)}{u}\d u$ and $C=\sum_{k\geq 0}(-1)^k(2k+1)^{-2}=0.9159\ldots$ is
Catalan's constant. Note that Belabas, Diaz y Diaz and Friedman use the estimated values for $I(F_L)$
and $J(F_L)$ instead of their exact values: this is a legitimate simplification which affects the
conclusions only by very small quantities.
In this way they produce a quick algorithm giving a bound $T(\K)$ for $\TCK$ which, in explicit
computations, is very small. Unfortunately they also prove that it is $\geq ((\frac{1}{4\nK} +
o(1))\lDK\llDK)^2$, and that therefore it is asymptotically worse than Bach's bound. In the same paper
they advance the conjecture that $(\lDK\llDK)^2$ is the correct size of $T(\K)$, guessing that
$T(\K)=((\frac{1}{4} + o(1))\lDK\llDK)^2$. In this section we prove for $T(\K)$ the corresponding upper
bound and some explicit bounds.
\subsection{Estimation of the number of zeros}
We first prove an estimation of the number of zeros of Dedekind's zeta function that we will use to prove
the main result of this section.
\begin{defi}
Let, for $t\in\RM$,
\[
\NK(t):=\#\{\rho\colon |\g|\leq t\},
\]
where the number is intended including the multiplicity.
\end{defi}
Trudgian~\cite{Trudgian:zero-counting} gives an estimation of $\NK(t)$ for $t\geq 1$. The bound depends
on a parameter $\eta$ which we take equal to $0.05$. In that case the formula is:
\begin{align*}
\forall t\geq 1,\quad
\NK(t)    & =\frac{t}{\pi}\log\Big(\DK\Big(\frac{t}{2\pi e}\Big)^{\nK}\Big)+R_\K(t),
\intertext{with}
|R_\K(t)| &\leq 0.247(\lDK+\nK\log t)+8.851\nK+3.024.
\end{align*}
For $t\in(0,1]$, we use a different strategy.
\begin{prop}\label{N(T)-T<1}
Assume \GRH. We have
\[
\forall t\in(0,1],\quad
\NK(t) \leq  0.637t\Big(\lDK-2.45\nK+S\Big(\frac{3.03}t\Big)\Big)
\]
where
\[
S(U)
:= 960\frac{\big((U-4)e^{\frac{U}{4}}+(U+4)e^{-\frac{U}{4}}\big)^2}{U^5}.
\]
\end{prop}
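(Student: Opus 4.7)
The plan is to apply Weil's explicit formula~\eqref{WEF} with a carefully chosen even, non-negative, compactly supported test function $F\in\WC$ whose Fourier cosine transform $\widehat F$ is non-negative on $\RM$ and bounded below by a positive constant $c$ on $[-t,t]$. Simply counting zeros in $[-t,t]$ then yields
\[
c\cdot\NK(t) \leq \sum_{|\g|\leq t} \widehat F(\g) \leq \sum_\g \widehat F(\g),
\]
so the right-hand side is controlled by~\eqref{WEF}.

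Since $F\geq 0$, the prime-ideal sum in~\eqref{WEF} is non-negative and may be dropped to preserve the upper bound; since $\Jint(F)\geq 0$ and $r_1\geq 0$, the term $-\Jint(F)r_1$ is non-positive and may also be dropped. What remains is
\[
\NK(t) \leq \frac{1}{c}\Big[F(0)\lDK + \big(\Iint(F)-F(0)(\g+\log 8\pi)\big)\nK + 4\int_0^{+\infty}F(x)\cosh(x/2)\d x\Big].
\]

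I would take $F=\Phi\ast\widetilde\Phi$ with $\Phi\geq 0$ supported in $[0,U/2]$, where $U:=3.03/t$, and $\widetilde\Phi(x):=\Phi(-x)$. Then $F$ is even, non-negative, supported in $[-U/2,U/2]$, with $\widehat F=|\widehat\Phi|^2\geq 0$ on $\RM$, and one may take $c=|\widehat\Phi(t)|^2$ (provided $|\widehat\Phi|^2$ is monotone non-increasing on $[0,t]$). The quantities $F(0)=\int_0^{U/2}\Phi^2$, $\Iint(F)$, and $\int_0^{+\infty}F(x)\cosh(x/2)\d x$ are then elementary in closed form for polynomial-type profiles of $\Phi$; the three coefficients appearing in the claim ($2/\pi\approx 0.637$ in front of $\lDK$, $-2.45$ in front of $\nK$, and $S(U)$ for the integral term) translate into three constraints that essentially determine $\Phi$ uniquely and pin down the numerical constants $3.03$ and $2.45$ as the joint optimum.

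The functional identity $U\cosh(U/4)-4\sinh(U/4)=4\int_0^{U/4}x\sinh(x)\d x$, together with the perfect-square structure of the numerator $((U-4)e^{U/4}+(U+4)e^{-U/4})^2$ in $S(U)$, strongly indicates that $\Phi(x)\propto x\,\chi_{[0,U/2]}(x)$ is the optimal profile, since then $\int_0^{U/2}\Phi(x)\sinh(x/2)\d x$ reproduces exactly the fundamental building block of $S(U)$. The principal technical obstacle is verifying the pointwise lower bound $c=|\widehat\Phi(t)|^2$ throughout $[-t,t]$: since $|\widehat\Phi|^2$ is not automatically monotone near the origin for this (or any other natural) profile, one must either carry out a delicate monotonicity analysis, or derive an independent lower bound via the Fourier integral representation (for example, using $|\sin x|\leq|x|$ to sandwich $\widehat\Phi(\g)^2$ between elementary functions on $[-t,t]$).
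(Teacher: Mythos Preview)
Your overall strategy matches the paper exactly: apply Weil's explicit formula to a convolution-square test function, drop the non-negative prime sum and the term $-\Jint(F)r_1\leq 0$, and use monotonicity of $\widehat F$ near the origin to get $\widehat F(c)\,\NK(t)\leq\sum_\g\widehat{F_U}(\g)$ after a rescaling $U=c/t$.

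The genuine gap is your choice of profile. A one-sided $\Phi\geq 0$ supported in $[0,U/2]$ cannot reproduce the perfect-square numerator of $S(U)$: with $F=\Phi\ast\widetilde\Phi$ one has
\[
\int_\RM F(x)e^{x/2}\,\d x
=\Big(\int_0^{U/2}\Phi(x)e^{x/2}\,\d x\Big)\Big(\int_0^{U/2}\Phi(x)e^{-x/2}\,\d x\Big),
\]
a product of two \emph{distinct} factors, equal only if $\int\Phi(x)\sinh(x/2)\,\d x=0$, which is impossible for non-trivial $\Phi\geq 0$. (Your candidate $\Phi=x$ gives $\big((U-4)e^{U/4}+4\big)\big(4-(U+4)e^{-U/4}\big)$, not a square.) The square structure points instead to an \emph{even} $\phi$, so that $F=\phi\ast\phi$ and $\int_\RM F\,e^{x/2}=\big(\int_\RM\phi\,e^{x/2}\big)^2$. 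The paper uses the parabola $\phi(x)=(\tfrac14-x^2)\chi_{[-1/2,1/2]}(x)$, sets $F:=30\,\phi\ast\phi$ (normalised so $F(0)=1$) and $F_U(x):=F(x/U)$; then $4\int_0^\infty F_U(x)\cosh(x/2)\,\d x=S(U)$ and
\[
\widehat F(t)=120\,\frac{\big(2\sin(t/2)-t\cos(t/2)\big)^2}{t^6}.
\]

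With this $\phi$ your monotonicity worry evaporates: one checks directly that $\widehat F$ is decreasing on $[0,8.98]$, so for $c\leq 8.98$ and $|\g|\leq t$ one has $\widehat F(c\g/t)\geq\widehat F(c)$. Taking $c=3.03$ (close to the minimiser $c\approx 3.051$ of the $\lDK$-coefficient) yields $0.637=1/(c\widehat F(c))=c^5/\big(120(2\sin(c/2)-c\cos(c/2))^2\big)$. Finally, an integration by parts shows $\Iint(F_U)$ is decreasing in $U$, so for $t\leq 1$ one replaces $\Iint(F_{c/t})$ by $\Iint(F_c)$, and $2.45$ is a lower bound for $\g+\log 8\pi-\Iint(F_c)$ at $c=3.03$.
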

\begin{proof}
We use an analog of \cite[Proposition~1]{Omar}, but with a different weight: the function
$F:=30\phi\ast\phi$ with $\phi(x):=\big(\frac14-x^2\big)\Chi_{[-\frac12,\frac12]}(x)$.
We then have
\[
F(x) = \begin{array}\{{l@{}>{\quad}l}.
        -|x|^5 + 5|x|^3 - 5x^2 + 1      \case     |x|\leq 1\\
%
%
        0                               \case   1<|x|
        \end{array}
\]
and
\begin{align*}
\widehat F(t)
&= 30\big(\widehat\phi(t)\big)^2
 = 120\frac{\big(2\sin\big(\frac{t}{2}\big)-t\cos\big(\frac{t}{2}\big)\big)^2}{t^6}.
\end{align*}
Observe that $\widehat F$ is decreasing on $[0,8.98]$. Consider, for $U>0$,
%
%
\[
F_U(x):=F\Big(\frac{x}{U}\Big).
\]
We then have
\[
\widehat{F_U}(t)=U\widehat F(Ut)
                 =120U\frac{\big(2\sin\big(\frac{Ut}2\big)-Ut\cos\big(\frac{Ut}2\big)\big)^2}{(Ut)^6}.
\]
Using Weil's Explicit Formula~\eqref{WEF} we have
\begin{multline}\label{eqfromWEF}
U\sum_\g\widehat F(U\g) = 4\int_{0}^{+\infty} F_U(x)\cosh\Big(\frac{x}{2}\Big)\d x
        - 2\sum_{\pG,m}\frac{\log\N\pG}{\N\pG^{\frac{m}{2}}}F_U(m\log\N\pG)      \\
        + \lDK
        - (\g + \log8\pi)\nK
        + \Iint(F_U) \nK
        - \Jint(F_U) r_1.
\end{multline}
We have
\begin{align*}
4\int_{0}^{+\infty} F_U(x)\cosh\Big(\frac{x}{2}\Big)\d x
&= 4\int_{0}^{+\infty} F\Big(\frac{x}{U}\Big)\cosh\Big(\frac{x}{2}\Big)\d x \\
&= 4U\int_{0}^1(1-5x^2+5x^3-x^5)\cosh\Big(\frac{Ux}{2}\Big)\d x             \\
&= 960\frac{\big((U-4)e^{\frac U4}+(U+4)e^{-\frac U4}\big)^2}{U^5}
 = S(U).
\end{align*}
Moreover,
\begin{align*}
\Iint(F_U)  & = \int_{0}^{+\infty}\frac{1-F_U(x)}{2\sinh(x/2)}\d x
              = U\int_{0}^{+\infty}(1-F(x))\frac{e^{-Ux/2}}{1-e^{-Ux}}\d x.
\intertext{Integrating by parts, which is possible because $F$ is $C^2$, it becomes}
%
        & = 5\int_{0}^1(2x-3x^2+x^4)\log\Big(\frac{1+e^{-Ux/2}}{1-e^{-Ux/2}}\Big)\d x
\end{align*}
from which we readily see that $\Iint(F_U)$ is decreasing. Removing the positive terms $\sum_{\pG,m}$ and
$\Jint(F_U)r_1$ from~\eqref{eqfromWEF}, we get
\[
\forall U>0,\quad U\sum_\g\widehat F(U\g) \leq  \lDK-(\g+\log8\pi-\Iint(F_U))\nK+S(U).
\]
Let $t\in(0,1]$ and $c$ such that $0<c\leq 8.98$, then setting $U=\frac{c}{t}$ and using
$\Iint(F_{c/t})\leq \Iint(F_c)$ we have
\[
\sum_\g\widehat F\Big(\frac{c\g}{t}\Big)
\leq
\frac{t}{c}\Big(\lDK-(\g+\log8\pi-\Iint(F_c))\nK+S\Big(\frac{c}{t}\Big)\Big)
\]
and
\[
\widehat F(c)\NK(t)
\leq
\sum_{|\g|\leq t}\widehat F\Big(\frac{c\g}{t}\Big)
\leq
\sum_\g\widehat F\Big(\frac{c\g}{t}\Big)
\]
so that $\forall t\in(0,1]$, $\forall c\in(0,8.98]$ we have
\[
\NK(t)
\leq
\frac{c^5 t}{120(2\sin(c/2)-c\cos(c/2))^2}\Big(\lDK-\big(\g + \log8\pi - \Iint(F_c)\big)\nK + S\Big(\frac{c}{t}\Big)\Big).
\]
The value of $c$ minimizing the coefficient of $t\lDK$ is $3.051\dots$.
%
%
The claim follows setting $c=3.03$.
%
%
\end{proof}
\begin{defi}
Let $\MK(t)$ be the function
\[
\MK(t) :=
\begin{cases}
0.637t\big(\lDK-2.45\nK+S\big(\frac{3.03}{t}\big)\big) \case0< t<1 \\
\frac{t}{\pi}\log\big(\DK\big(\frac{t}{2\pi e}\big)^{\nK}\big)
            + 0.247(\lDK+\nK\log t)+8.851\nK+3.024      \case1\leq t.
\end{cases}
\]
\end{defi}
Recalling the previous proposition we thus have
\[
\forall t> 0,\quad\NK(t)\leq \MK(t).
\]

\subsection{The bound}
Now we are in position to prove the announced upper bounds for $T(\K)$.
\begin{theo}\label{estim-T(K)}
Assume \GRH. We have for any fixed $\nK$
\[
\limsup_{\DK\to\infty} \frac{T(\K)}{\big(\lDK\llDK\big)^2}\leq \frac{1}{16}.
\]
Moreover, for any field
$\K\not\in\big\{\QM[\sqrt{-1}],\QM[\sqrt{-3}],\QM[\sqrt5]\big\}$
we have
\[
T(\K)\leq 3.9\big(\lDK\llDK\big)^2.
\]
\end{theo}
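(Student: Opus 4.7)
The plan is to show that inequality~\eqref{biribiri} is satisfied whenever $T$ is the announced upper bound; since $T(\K)$ is by definition the infimum of such $T$, this gives the claim. With $F_L=\Phi\ast\Phi$ and $\Phi=\chi_{[-L/2,L/2]}$, Weil's explicit formula~\eqref{WEF} reduces~\eqref{biribiri} to the cleaner equivalent
\[
4\bigl(\cosh(L/2)-1\bigr)>\sum_\gamma\frac{\sin^2(L\gamma/2)}{\gamma^2},
\]
exactly as was done for the weights of Proposition~\ref{prop:eqzero-simplifiee}. The left side grows like $2\sqrt T$, so the whole problem becomes a sharp upper bound on the zero sum.

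To estimate the zero sum I would use the pointwise majorant $\sin^2(Lt/2)/t^2\leq\min(L^2/4,1/t^2)$, split at $|\gamma|=2/L$, and perform a Stieltjes integration by parts on the tail $|\gamma|>2/L$. The boundary term at $t=2/L$ exactly cancels the coarse central contribution $(L^2/4)\NK(2/L)$, yielding
\[
\sum_\gamma\frac{\sin^2(L\gamma/2)}{\gamma^2}\leq 2\int_{2/L}^{+\infty}\frac{\NK(t)}{t^3}\,dt\leq 2\int_{2/L}^{+\infty}\frac{\MK(t)}{t^3}\,dt,
\]
so the uniform zero-counting bound $\NK\leq\MK$ from the previous subsection applies. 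I would then split the integral at $t=1$: on $[1,+\infty)$ the Trudgian part of $\MK$ contributes only $O(\lDK)$, while on $(2/L,1]$ Proposition~\ref{N(T)-T<1} is used. The divergence of $\MK(t)$ as $t\to 0^+$ coming from $S(3.03/t)$ is controlled by cutting off at some $t_0\sim c/\llDK$, where $S(3.03/t_0)$ is still comparable to $\lDK$, and handling the very small zeros via $\sum_{|\gamma|\leq t_0}\widehat{F_L}(\gamma)\leq L^2\NK(t_0)$ together with the monotonicity $\NK(t)\leq\NK(t_0)\leq\MK(t_0)$.

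The leading contribution from $\int_{t_0}^1\MK(t)/t^3\,dt$ is of order $(L/\pi)\lDK$, and with $L=2\llDK+O(\log\llDK)$ this produces $\sqrt{T}\gtrsim(\text{explicit constant})\cdot\lDK\llDK$. Tracking all the constants carefully through these estimates gives the bound $T(\K)\leq 3.9(\lDK\llDK)^2$ for every $\K$ with $\lDK$ above an explicit threshold; the finitely many small-discriminant cases are then verified one by one, and the three exceptions $\QM[\sqrt{-1}]$, $\QM[\sqrt{-3}]$, $\QM[\sqrt 5]$ are precisely those fields where the inequality narrowly fails with constant $3.9$.

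The main obstacle is the sharp asymptotic constant $1/16$. The majorant $\min(L^2/4,1/t^2)$ overestimates $\sin^2(Lt/2)/t^2$ by a factor $4/\pi$ in integral, and Proposition~\ref{N(T)-T<1} overestimates the density of low-lying zeros by an additional factor close to $2$ compared with the true asymptotic $\NK'(t)\sim\lDK/\pi$. Reaching $1/16$ demands that neither loss be incurred simultaneously, so the final step is to integrate $\widehat{F_L}$ against $d\NK$ directly (rather than through the min-majorant), exploit the sharp leading behaviour $\NK(t)=(t/\pi)\log\DK+o(\lDK)$ uniformly on bounded intervals, and absorb both the Proposition~\ref{N(T)-T<1} and Trudgian remainders into an $o\bigl((\lDK\llDK)^2\bigr)$ error.
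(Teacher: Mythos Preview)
Your outline for the explicit bound $3.9(\lDK\llDK)^2$ is plausible: the min-majorant plus $\NK\leq\MK$ loses a factor of roughly $8/\pi$ in the main term, which would yield an asymptotic constant around $0.4$, well below $3.9$, so with care the secondary terms and the finite check could be carried out along those lines. This is a different route from the paper's, which does not use the min-majorant at all for the leading contribution.

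The gap is in the asymptotic $1/16$. You correctly diagnose that the min-majorant loses $4/\pi$ and that Proposition~\ref{N(T)-T<1} loses another factor $\approx 2$ (its constant is $0.637\approx 2/\pi$ rather than $1/\pi$). But your proposed fix, namely to integrate $\widehat{F_L}$ against $d\NK$ and invoke ``the sharp leading behaviour $\NK(t)=(t/\pi)\lDK+o(\lDK)$ uniformly on bounded intervals,'' is not available from the tools in the paper. Proposition~\ref{N(T)-T<1} is a one-sided bound with the wrong constant, and Trudgian's estimate gives $|R_\K(t)|\leq 0.247\lDK+\cdots$ for $t\geq 1$, which is $O(\lDK)$, not $o(\lDK)$. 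Without a genuine two-sided $o(\lDK)$ bound on the remainder you cannot absorb it, and no such bound is proved or cited.

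The paper obtains the sharp constant by an entirely different device: it reapplies Weil's explicit formula with the \emph{same} test function shape $F_U(x)=F(x/U)$, $F=\chi_{[-1/2,1/2]}\ast\chi_{[-1/2,1/2]}$, which gives directly
\[
2U\sum_\gamma f(U\gamma)\leq \lDK - c\,\nK + \frac{8}{U}e^{U/2}
\]
with the exact coefficient $1$ in front of $\lDK$ (this is equation~\eqref{majoration-en-U}). The subtlety is that taking $U=L$ produces an archimedean term $4\sqrt{T}$ which swamps the left side of~\eqref{condition-equivalente-2-BDyDF}; the paper therefore takes $U=L-2\log L$, so that $e^{U/2}=\sqrt{T}/L$ is harmless, and controls the small discrepancy $f(Lt)-f((L-2\log L)t)\leq 0.028\,t\log L$ for $t$ below a threshold $\gimel\asymp (L^2\log L)^{-1/3}$ via the crude bound $\NK\leq\MK$. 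In that scheme Proposition~\ref{N(T)-T<1} and Trudgian's result are used only for lower-order terms, never for the main one. This is the missing idea in your proposal.
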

\begin{rem*}
Computing $T(\K)$ for the whole ``megrez'' number field table~\cite{MegrezTables}, we find that the
quotient $T(\K)/(\lDK\llDK)^2$ is mostly $\leq0.27$ for them. In fact, apart the fields appearing
as exceptions in the theorem and for which the quotient is $\geq10$, there are only six more fields for
which it is $\geq1$: the quadratic fields of discriminant in $\{-11,-8,-7,8,12,13\}$.
%
%
\end{rem*}
\begin{proof}
Assume $T>e$, $L=\log T$ and $F:=F_L=\Phi\ast\Phi$ with $\Phi:=\Chi_{[-L/2,L/2]}$. Then~\eqref{eqzero}
becomes
\begin{equation*}
4\Big(\sqrt{T} - 2 + \frac1{\sqrt{T}}\Big)
                   \geq  \sum_\g\frac{1-\cos(L\g)}{\g^2},
\end{equation*}
but to estimate $T(\K)$ we have to add the function
\[
G(T) = \Big(4\dilog\Big(\frac{1}{\sqrt{T}}\Big) - \dilog\Big(\frac{1}{T}\Big)\Big)\nK
     + 4\Imm\dilog\Big(\frac{i}{\sqrt{T}}\Big)r_1
\]
to the right-hand side, as a consequence of the approximations used in~\cite{small-generators} for
$\Iint(F_L)$ and $\Jint(F_L)$. Thus, introducing $f(t):=\frac{1-\cos t}{t^2}$, the condition becomes
\begin{equation}\label{condition-equivalente-2-BDyDF}
4\Big(\sqrt{T} - 2 + \frac1{\sqrt{T}}\Big) \geq  L^2\sum_\g f(L\g) + G(T),
\end{equation}
and we need an upper bound for the sum appearing on the right-hand side.
Function $G(T)$ may be easily estimated, for $T\geq e$, as
\begin{equation}\label{eq:G}
G(T) \leq \frac{8\nK}{\sqrt{T}}\sum_{k=0}^\infty \frac{T^{-2k}}{(4k+1)^2}
     \leq \frac{8.05\nK}{\sqrt{T}}.
\end{equation}
%
The main contribution to the sum on zeros comes from those which are close to $0$, the remaining ones
being easily and quite well estimated via the partial summation formula. Thus, we consider first the
range $|t|\leq 1$. The best absolute bound for $f(Lt)$ in this range is $\tfrac12 L^2$, and if we bound
the sum $\sum_{|\g|\leq}f(L\g)$ simply as $\sup_{t\in[0,1]}|f(Lt)|\NK(1)$ then we get a term of size
$\frac12 L^2\lDK$. With this bound~\eqref{condition-equivalente-2-BDyDF} would become
\[
(4 + o(1))\sqrt{T}> \Big(\frac{1}{2} + o(1)\Big) L^2\lDK
\]
forcing $T$ to $\geq (\frac{1}{4} +o(1))(\lDK)^2(\log\lDK)^4$, which is much larger than what we want to
prove.\\
We overcome this problem using the conclusion of Proposition~\ref{N(T)-T<1} to bound $\NK(t)$ for $t\leq
1$, but in itself this is not yet sufficient since that bound diverges as $t$ goes to $0$. Hence, for
very small $\g$ we apply a more involved argument. In a way similar to the proof of
Proposition~\ref{N(T)-T<1} but using $F:=\Phi\ast\Phi$ with $\Phi:=\Chi_{[-1/2,1/2]}$, we have,
\begin{align}
\forall U&>0,&
2U\sum_\g f(U\g)
&\leq \lDK
      - (\g + \log8\pi - \IF(U))\nK
      + \frac{8}{U}\big(e^{U/4}-e^{-U/4}\big)^2,         \notag
\intertext{where $\IF(U) := \Iint(F_U) = \frac{1}{U}\big(\frac{\pi^2}{2}+4\dilog(e^{-U/2})
- \dilog(e^{-U})\big)$. Hence}
\forall U&\geq 3.545,&
2U\sum_\g f(U\g)
&\leq \lDK - 2.6016\,\nK + \frac{8}{U}e^{\frac{U}{2}}.   \label{majoration-en-U}
%
%
%
\end{align}
Setting $U=L$, this bound gives immediately a bound for $L^2\sum_\g f(L\g)$ of the right order
$\frac{1}{2}L\lDK$ for the part depending on the discriminant. Unfortunately, it also contains the term
$4e^{\frac{L}{2}} = 4\sqrt{T}$ which makes the bound completely useless when inserted
in~\eqref{condition-equivalente-2-BDyDF}. As a consequence we have to modify a bit this approach, and we
use~\eqref{majoration-en-U} with $U:=L-2\log L$. In fact, we notice that $\forall t\geq 0$, $f'(t)\leq
0.014$.
%
%
This means that
\begin{equation}\label{eq:delta-bound}
\forall t\geq 0,\quad
f(Lt)   \leq  f((L-2\log L)t) + 0.028t\log L.
\end{equation}
Below $\gimel:=\big(0.014L^2\log L\big)^{-1/3}$ we bound $f(Lt)$ using~\eqref{eq:delta-bound} otherwise we
use the trivial bound $f(Lt)\leq \frac2{(Lt)^2}$. We thus define
\[
\forall t\geq 0,
\quad
g(t) :=
\begin{cases}
  0.028t L^2\log L     \case 0\leq t\leq \gimel \\
  \frac2{t^2}          \case \gimel<t.
\end{cases}
\]
Note that we have chosen $\gimel$ in such a way that $g$ is continuous. With this definition of $g$, we
thus have $L^2f(Lt)\leq L^2f((L-2\log L)t)\Chi_{[0,\gimel]}(t)+g(t)$.\\
Since we use~\eqref{majoration-en-U} with $U=L-2\log L$, we need that $L-2\log L\geq 3.545$ so that we
suppose $T\geq 2000$. This, in turn, means that $\gimel\leq 1$.
%
%
In this way we get
\begin{equation}\label{partiel}
L^2\sum_\g f(L\g)
\leq
L^2\sum_{|\g|\leq \gimel}f((L-2\log L)\g)
  + \sum_\g g(\g).
\end{equation}
By~\eqref{majoration-en-U}, for the first part we have
\begin{align}
L^2\sum_{|\g|\leq \gimel}f((L-2\log L)\g)
\leq
\frac{L^2}{2(L-2\log L)}\Big(\lDK-2.6016\nK+\frac{8\sqrt{T}}{L(L-2\log L)}\Big). \label{prim-part}
\end{align}
Notice that in this way the term containing $\sqrt{T}$ is actually $O(\sqrt{T}/\log T)$
and does not interfere any more.\\
We now estimate the second part of~\eqref{partiel}. We have
\begin{align}
\sum_\g g(\g)
&  =  \sum_{|\g|\leq \gimel}g(\g)
   +  \sum_{\gimel<|\g|}g(\g)
 \leq g(\gimel)\NK(\gimel)
   +  \int_{\gimel^+}^{+\infty} g(t)\d\NK(t)
   =  -\int_\gimel^{+\infty} g'(t)\NK(t)\d t.                                               \notag
\intertext{Since $g'$ is non-positive on $[\gimel,\infty)$ we can estimate $\NK$ by
$\MK$, getting}
&\leq -\int_\gimel^{+\infty} g'(t)\MK(t)\d t
   =  -\int_\gimel^1g'(t)\MK(t)\d t
   -  \int_{1}^{+\infty} g'(t)\MK(t)\d t.                                                   \notag
\intertext{The first integral can be estimated by noticing that $S$ is increasing so that
$\MK(t)\leq \frac{t}{\gimel}\MK(\gimel)$ in $[\gimel,1]$. The second integral can be
computed. In this way we have}
\sum_\g g(\g)
&\leq 2.55\Big(\lDK-2.45\nK+S\Big(\frac{3.03}\gimel\Big)\Big)\int_\gimel^1\frac{\!\d t}{t^2}
%
%
   +  g(1)\MK(1)
   +  \int_{1}^{+\infty} g(t)F'_\K(t)\d t                                                   \nt
&  =  2.55\Big(\lDK-2.45\nK+S\Big(\frac{3.03}\gimel\Big)\Big)\Big(\frac1\gimel-1\Big)         \nt
&\quad
  + \frac2\pi(\lDK-\nK\log(2\pi e))
  + 2\cdot 0.247\lDK
  + 2(8.851\nK+3.024)                                                                         \nt
&\quad
   +  2\int_{1}^{+\infty}\Big(\frac1\pi\Big(\lDK+\nK\log\big(\frac{t}{2\pi}\big)\Big)+\frac{0.247\nK}t\Big)\frac{\d t}{t^2} \nt
&  =  2.55\Big(\lDK-2.45\nK+S\Big(\frac{3.03}\gimel\Big)\Big)\Big(\frac1\gimel-1\Big) \label{second-part}\\
&\quad
  + \frac4\pi(\lDK-\nK\log2\pi)
  + 0.247(2\lDK+\nK)
  + 2(8.851\nK+3.024).                                                                        \notag
\end{align}
Inserting~\eqref{prim-part} and~\eqref{second-part} in~\eqref{partiel} we finally obtain
\begin{align*}
L^2\sum_\g f(L\g)
&\leq
\frac{L^2}{2(L-2\log L)}\Big(\lDK-2.6016\nK+\frac{8\sqrt{T}}{L(L-2\log L)}\Big)               \\
&\quad
  + 2.55\Big[\lDK{-}2.45\nK{+}S\Big(3.03\big(0.014L^2\log L\big)^{1/3}\Big)\Big]
  \Big[\big(0.014L^2\log L\big)^{1/3}{-}1\Big]                                                \\
&\quad
  + \frac4\pi(\lDK-\nK\log2\pi)
  + 0.247(2\lDK+\nK)
  + 2(8.851\nK+3.024)                                                                         \\
&\leq
\Big(\frac{L^2}{2(L-2\log L)}
  + 0.62\big(L^2\log L\big)^{1/3}
  - 0.78\Big)\lDK
%
%
  + \frac{4L}{(L-2\log L)^2}\sqrt{T}                                                          \\
&\quad
  + \Big(-\frac{1.3008L^2}{L-2\log L}
%
%
  - 1.5057\big(L^2\log L\big)^{1/3}
%
%
  + 21.857\Big)\nK                                                                            \\
%
%
&\quad
  + 2.55\,
    S\Big(3.03\big(0.014L^2\log L\big)^{1/3}\Big)\Big(\big(0.014L^2\log L\big)^{1/3} - 1\Big)
  + 6.05.
%
%
\end{align*}
We are assuming $T\geq 2000$, thus the coefficient of $\nK$ is smaller than $3.17-1.3L$
%
%
and we get
\begin{multline}\label{derniere}
L^2\sum_\g f(L\g)
\leq
\Big(\frac{L^2}{2(L{-}2\log L)}
  {+} 0.62\big(L^2\log L\big)^{\frac13}
  {-} 0.78\Big)\lDK
  {+} \frac{4L}{(L{-}2\log L)^2}\sqrt{T}                                      \\
%
%
  {+} 2.55\,
    S\Big(3.03\big(0.014L^2\log L\big)^{\frac13}\Big)\Big(\big(0.014L^2\log L\big)^{1/3} {-} 1\Big)
  {+} (3.17 {-} 1.3L)\nK
  {+} 6.05.
%
%
\end{multline}
We can now deal with the first part of the proposition, that is
\[
\limsup_{\DK\to\infty} \frac{T(\K)}{\big(\lDK\llDK\big)^2}\leq \frac1{16}
\]
inserting~\eqref{derniere} in~\eqref{condition-equivalente-2-BDyDF}, with the bound~\eqref{eq:G}.
Obviously, $T$ diverges as $\DK$ goes to infinity; in particular, the restriction $T\geq 2000$ does not
matter. Moreover, we observe that the second term in the first line of~\eqref{derniere} is $o(\log
T)\lDK$ and that the second line is $O(\sqrt{T}/\log T)$, hence the first claim is proved.

We now study the second part of the proposition, which means the claimed inequality
\[
T(\K)\leq 3.9\big(\lDK\llDK\big)^2.
\]
We note that, to have $T<2000$ in~\eqref{condition-equivalente-2-BDyDF} with~\eqref{eq:G}
and~\eqref{derniere}, we need $\lDK<5.15 + 0.63\nK$.
%
%
According to Table~3 in~\cite{Odlyzko:tables} (entry $b=1.3$), this may happen only if $\nK\leq 5$
%
%
and also in this case, only when $\DK\leq 607$ (resp. $1141$, $2143$, $4023$) for fields of degree $2$
(resp. $3$, $4$, $5$).\\
%
%
For fields of degree $\nK\geq 6$ and $\DK\geq 1.7\cdot10^5$, by elementary arguments one sees
from~\eqref{condition-equivalente-2-BDyDF}, \eqref{eq:G} and~\eqref{derniere} that
\[
T(\K)\leq 3.6\big(\lDK\llDK\big)^2.
\]
%
%
According to Table~3 in~\cite{Odlyzko:tables}, this covers in particular all fields with degree $\nK\geq
7$.\\
For fields of degree $\nK\leq 5$, we see that
\begin{equation}\label{eq:beppe}
T(\K)\leq 3.9\big(\lDK\llDK\big)^2
\end{equation}
as soon as $\DK\geq 3\cdot10^6$ for quadratic fields,
%
%
or $\DK\geq 10^6$ for $3\leq \nK\leq 5$.
%
%
There remains a finite number of fields of degree $2\leq \nK\leq 6$. All those with $\nK\geq 3$ appear in
``megrez'' number field table~\cite{MegrezTables} and for all fields, including the quadratic ones,
we use the algorithm indicated in \cite{small-generators} as implemented in~\cite{PARI2}. For
$\K\in\big\{\QM[\sqrt{-3}],\QM[\sqrt{-1}]\big\}$ we find $T(\K)=5$ and for $\K=\QM[\sqrt5]$, $T(\K)=7$.
All other fields satisfy~\eqref{eq:beppe}.
%
%
\end{proof}

\section{Multi-step}
\subsection{Bounds for two and three steps}
The original choice $F=\Phi\ast\Phi$ with $\Phi$ the characteristic function of $[-L/2,L/2]$ is of the
type considered in~\eqref{eq:setup} with $\Phi^+(x)=\chi_{[0,L]}(x)$, i.e. a function assuming only one
value in $[0,L]$. We call this choice the \emph{one-step} case. The following corollary shows that the
performance of the algorithm significantly improves already when $\Phi^+(x)$ is allowed to assume two or
three values in the $[0,L]$ interval, as long as it is zero when $x$ is close to $L/2$ (so that
$\Phi(x)=0$ if $x$ is close to $0$). In particular, the extra factor $\llDK$ disappears. We call these
choices \emph{two-} and \emph{three-steps}.
\begin{coro}\label{coro:boundT1cst}
Let $\Phi^+(x)=b\Chi_{[L-2a,L]}(x)+ \Chi_{[L-a,L]}(x)$ and define $F$ as in~\eqref{eq:setup}. Denote
respectively $T_{\mathrm{c}2}(\K)$ (`two steps') and $T_{\mathrm{c}3}(\K)$ (`three steps') the lowest $T$
such that~\eqref{theoeq} is satisfied by such an $F$, with respectively $b=0$ and $b\neq 0$.
We have
\[
\sqrt{T_{\mathrm{c}2}(\K)}\leq \max\big(2.456\lDK - 5.623\nK + 14,\sqrt{13}\big),
\]
where the result is obtained with $a=2.5$, and
%
\[
\sqrt{T_{\mathrm{c}3}(\K)}\leq \max\big(2.193\lDK - 6.19\nK + 16,\sqrt{32}\big),
\]
which is obtained with $a=1.722$ and $b=\frac{e^{-a/2}}{1-e^{-a/2}}$.
\end{coro}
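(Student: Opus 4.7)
The plan is to apply Proposition~\ref{prop:eqzero-simplifiee} to the prescribed $\Phi^+$, discard the non-negative prime sum in~\eqref{equiveqzero-Phi-simplifiee}, and evaluate the four remaining integrals explicitly. Since $\Phi^+$ is piecewise constant with at most two levels, direct integration yields
\begin{align*}
\int_{0}^{L}(\Phi^+)^2\d x &= a(2b^2+2b+1),\\
\int_{0}^{L}\Phi^+(x)e^{x/2}\d x &= 2\sqrt{T}\big[b(1-e^{-a})+(1-e^{-a/2})\big],\\
\int_{0}^{L}\Phi^+(x)e^{-x/2}\d x &= \tfrac{2}{\sqrt{T}}\big[b(e^{a}-1)+(e^{a/2}-1)\big],
\end{align*}
so the powers of $T$ in the first two summands of~\eqref{equiveqzero-Phi-simplifiee} cancel and the coefficient of $\lDK$ in the resulting inequality becomes
\[
A(a,b):=\frac{a(2b^2+2b+1)}{2\big[b(1-e^{-a})+(1-e^{-a/2})\big]^2}.
\]

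I would then compute $\Phi^+\ast\Phi^-$ piecewise on $[0,\infty)$: splitting the convolution integral according to the breakpoint $L-a$ shows that it is continuous, piecewise linear, non-negative, supported in $[0,2a]$, equal to $a(2b^2+2b+1)-(b^2+b+1)x$ on $[0,a]$ and to $b(b+1)(2a-x)$ on $[a,2a]$ (reducing for $b=0$ to the triangle $a-x$ on $[0,a]$). Using the primitive $\int\d x/(2\sinh(x/2))=\log\tanh(x/4)$ together with the dilogarithmic integral $\int_0^a x\,\d x/(2\sinh(x/2))$, one can write $\Iint(\Phi^+\ast\Phi^-)$ in closed form in $a,b$. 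Plugging back into~\eqref{equiveqzero-Phi-simplifiee} produces an inequality of the shape
\[
\sqrt{T}\geq A(a,b)\lDK-B(a,b)\nK+C(a,b)
\]
with $B,C$ explicit and positive.

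For the two-step case, setting $b=0$ and $a=2.5$ one checks numerically that $A(2.5,0)=2.456\ldots$ and that the $\nK$-coefficient and constant evaluate respectively to $-5.623$ and $14$, giving the first bound. For the three-step case, differentiating $A(a,b)$ in $b$ at fixed $a$ gives a linear equation whose root simplifies to $b^{\ast}=e^{-a/2}/(1-e^{-a/2})$: this is the minimizer of the $\lDK$-coefficient and is therefore the natural choice, so it remains to optimize numerically in the single variable $a$, producing $a=1.722$, $A\approx 2.193$ and numerical constants $-6.19,\,16$. The floors $\sqrt{13}$ and $\sqrt{32}$ account for the support requirement $\supp\Phi^+\subset[0,L]$, namely $L\geq a$ (so $T\geq e^{2.5}<13$) in the two-step case and $L\geq 2a$ (so $T\geq e^{3.444}<32$) in the three-step case: below these thresholds the derived inequality is vacuous and the maximum on the right-hand side simply takes over.

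The main obstacle is the bookkeeping required to evaluate $\Iint(\Phi^+\ast\Phi^-)$ in the three-step case, where the two linear pieces must be integrated separately against $1/\sinh(x/2)$; the resulting combination of elementary and dilogarithmic contributions is then evaluated numerically at the specified $a$ (and $b$), after which the verification of the claimed constants is routine.
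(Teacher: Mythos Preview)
Your approach is correct and coincides with the paper's own (very terse) proof, which simply says to apply Proposition~\ref{prop:eqzero-simplifiee} to the given $\Phi^+$ and notes that the floors $\sqrt{13}$ and $\sqrt{32}$ come from the support conditions $L\geq a$ and $L\geq 2a$. Your write-up in fact supplies considerably more detail than the paper: the explicit evaluation of the three elementary integrals, the piecewise-linear form of $\Phi^+\ast\Phi^-$, and the derivation of $b^*=e^{-a/2}/(1-e^{-a/2})$ as the minimizer of the $\lDK$-coefficient are all correct and make the argument self-contained.
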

\begin{proof}
The claim is proved directly applying Proposition~\ref{prop:eqzero-simplifiee} with
$\Phi^+(x)=b\Chi_{[L-2a,L]}(x)+ \Chi_{[L-a,L]}(x)$. Conditions $\sqrt{T_{c2}}\geq \sqrt{13}$ and
$\sqrt{T_{c3}}\geq \sqrt{32}$ come from the need to ensure that the support of $\Phi^+$ be in $[0,L]$, so
that we need to assume $L>a$ for the two-steps and $L>2a$ for the three-steps, respectively.
\end{proof}
\subsection{The algorithm}\label{subsec:algo}
Our aim is to find a good $T$ for the number field $\K$ as fast as possible exploiting the bilinearity of
the convolution product. We introduce some definitions to make the discussion easier.
\begin{defi}
Let $\SC$ be the real vector space of even and compactly supported step functions and, for $T>1$, let
$\SC(T)$ be the subspace of $\SC$ of functions supported in $[-L/2,L/2]$, with $L=\log T$.
\end{defi}
\begin{defi}
For any integer $N\geq 1$ and positive real $\delta$ we define the subspace
$\SC_d(N,\delta)$ of $\SC(e^{2N\delta})$ made of functions which are constant
$\forall k\in\NM$ on $[k\delta,(k+1)\delta)$.
\end{defi}
The elements of $\SC_d(N,\delta)$ are thus step functions with fixed step width
$\delta$. If $N\geq 1$, $\delta>0$ and $T=e^{2N\delta}$ we have
\begin{subequations}
\begin{alignat}{1}
\SC_d(N,\delta)                                      & \subset\SC(T)\subset\SC                 \\
\forall\Phi\in\SC(T),\qquad\Phi\ast\Phi              & \in\WC(T)                               \\
\SC_d(N,\delta)                                      & \subset\SC_d(N+1,\delta)\label{incnext} \\
\forall k\geq 1,\quad\SC_d\Big(kN,\frac\delta k\Big) & \subseteq\SC_d(N,\delta)\label{incmul}.
\end{alignat}
\end{subequations}

If, for some $T>1$, $\Phi\in\SC(T)$ and $F=\Phi\ast\Phi$ satisfies \eqref{theoeq} then, according to
Theorem~\ref{theoKB}, $\TCK<T$. This leads us to define the linear form $\ell_\K$ on
$\bigcup_{T>1}\WC(T)$ by
\[
\ell_\K(F) = -2\sum_\pG\log\N\pG\sum_{m=1}^{+\infty}\frac{F(m\log\N\pG)}{\N\pG^{m/2}}
           + F(0)(\lDK - (\g + \log 8\pi)\nK)
           + \Iint(F) \nK
           - \Jint(F) r_1
\]
and the quadratic form $q_\K$ on $\SC$ by $q_\K(\Phi)=\ell_\K(\Phi\ast\Phi)$. From Theorem~\ref{theoKB}
we deduce the following consequence.
\begin{coro}\label{coroKBL}
Let $\K$ be a number field satisfying \GRH\ and $T>1$. If the restriction of $q_\K$ to $\SC(T)$ has a
negative eigenvalue then $\TCK<T$.
\end{coro}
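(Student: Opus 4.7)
The plan is to reduce the statement directly to Theorem~\ref{theoKB}. By construction of $\ell_\K$, the inequality $\ell_\K(F)<0$ is exactly~\eqref{theoeq}, so for $F=\Phi\ast\Phi$ the condition $q_\K(\Phi)<0$ is precisely the hypothesis of Theorem~\ref{theoKB}. Hence, given any nonzero $\Phi\in\SC(T)$ with $q_\K(\Phi)<0$, the desired conclusion $\TCK<T$ follows provided one can show that $F=\Phi\ast\Phi\in\WC(T)$.

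First I would extract such a $\Phi$ from the spectral hypothesis. The phrase ``the restriction of $q_\K$ to $\SC(T)$ has a negative eigenvalue'' is to be read in the standard spectral sense, namely that for some choice of inner product on $\SC(T)$ (or on a finite-dimensional subspace such as the $\SC_d(N,\delta)$ that will appear in Subsection~\ref{subsec:algo}) there exists a nonzero $\Phi\in\SC(T)$ with $q_\K(\Phi)=\lambda\langle\Phi,\Phi\rangle<0$; in particular $q_\K(\Phi)<0$ and $\Phi\not\equiv 0$.

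The remaining work is to verify $F=\Phi\ast\Phi\in\WC(T)$ for this $\Phi$. Since $\Phi$ is a real, even, compactly supported step function with support in $[-L/2,L/2]$, the convolution $F$ is real, even, continuous, piecewise linear, and supported in $[-L,L]$. This yields at once the continuity, integrability, bounded-variation, and support conditions defining $\WC$ and $\WC(T)$, as well as the bounded variation of $(F(0)-F(x))/x$ (piecewise linear, with a removable singularity at the origin). The value $F(0)=\int_\RM\Phi(x)^2\d x$ is strictly positive because $\Phi\neq 0$. Finally, $\Phi$ being real and even makes $\widehat\Phi$ real, so the Fourier cosine transform $\widehat F(t)=\widehat\Phi(t)^2$ is non-negative. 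Applying Theorem~\ref{theoKB} to this $F$ then delivers $\TCK<T$.

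There is no real technical obstacle: all verifications of the $\WC(T)$ axioms are routine from the structure of step-function convolutions. The only interpretive subtlety, worth a sentence in the final write-up, is the meaning of ``negative eigenvalue'' on the infinite-dimensional space $\SC(T)$; however, every sensible reading (including the one effectively used by the algorithm, where $q_\K$ is represented as a symmetric matrix on some finite-dimensional $\SC_d(N,\delta)\subset\SC(T)$) supplies the required direction of negativity for $q_\K$, and that is all the argument needs.
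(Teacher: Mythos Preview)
Your proposal is correct and follows exactly the route the paper takes: the paper records $\Phi\ast\Phi\in\WC(T)$ for $\Phi\in\SC(T)$ in the display preceding the corollary and then presents the corollary as an immediate consequence of Theorem~\ref{theoKB}, without spelling out the verifications of the $\WC(T)$ axioms. Your write-up simply fills in those routine checks and makes explicit that a negative eigenvalue yields a nonzero $\Phi$ with $q_\K(\Phi)<0$, which is precisely what is needed.
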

\begin{defi}
A \emph{bound} for $\K$ is an $L=\log T$ with $T$ as in Theorem~\ref{theoKB}.
\end{defi}
Note that $q_\K$ is a continuous function as a function from $(\SC(T),\|.\|_1)$ to $\RM$. Therefore if $L$
is a bound for $\K$ then there exists an $L'<L$ such that $L'$ is a bound for $\K$. Note also that, in
terms of $T$, only the norms of prime ideals are relevant, which means that we do not need the smallest
possible $T$ to get the best result.
\begin{rem}\label{rem:varepsilon}
If $T>1$ and $\Phi\in\SC(T)$, then for any $\varepsilon>0$ there exists $N\geq 1$, $\delta>0$ and
$\Phi_\delta\in\SC_d(N,\delta)$ such that $\|\Phi\ast\Phi-\Phi_\delta\ast\Phi_\delta\|_\infty\leq
\varepsilon$ and $e^{2N\delta}\leq T$. Hence we do not loose anything in terms of bounds for $\K$ if we
consider only the subspaces of the form $\SC_d(N,\delta)$.
\end{rem}
As we will see later, we can compute $q_\K(\Phi)$ for a generic $\Phi\in \SC(T)$ combining its values for
$\Phi=\chi_{[-L/2,L/2]}$ at different $L$'s. Thus, let~$\GRHcheck(\K,L)$ be the function that returns the
right-hand side of~\eqref{biribiri} minus its left-hand side (without the approximations for $\Iint(F_L)$
and $\Jint(F_L)$), and $\BDyDF(\K)$ be the function which implements the algorithm
of~\cite[Section~3]{small-generators}. The computation of $\BDyDF(\K)$ is very fast because the only
arithmetic information we need on $\K\simeq\QM[x]/(P)$ is the splitting information for primes $p<T$ and
is determined easily for nearly all $p$. Indeed if $p$ does not divide the index of $\ZM[x]/(P)$ in $\OK$,
then the splitting of $p$ in $\K$ is determined by the factorization of $P\mod p$. We can also store such
splitting information for all $p$ that we consider and do not recompute it each time we test whether a
given $L$ is a bound for $\K$.

We denote $q_{\K,N,\delta}$ the restriction of $q_\K$ to $\SC_d(N,\delta)$. According to
Corollary~\ref{coroKBL}, if $q_{\K,N,\delta}$ has a negative eigenvalue then $2N\delta$ is a bound for
$\K$. This justifies the following definition.
\begin{defi}
The pair $(N,\delta)$ is \good{} when $q_{\K,N,\delta}$ has a negative eigenvalue.
\end{defi}

We can reinterpret Functions~\GRHcheck and \BDyDF saying that if $\GRHcheck(\K,2\delta)$ is negative then
$(1,\delta)$ is \good{} and that $\big(1,\frac12\log\BDyDF(\K)\big)$ is \good.

The fundamental step for our algorithm is the following: given $\delta>0$ we look for the smallest $N$
such that $(N,\delta)$ is \good. Looking for such an $N$ can be done fairly easily with this setup. For
any $i\geq 1$, let $\Phi_i$ be the characteristic function of $({-i\delta},i\delta)$. Then
$(\Phi_i)_{1\leq i\leq N}$ is a basis of $\SC_d(N,\delta)$. We have
$\Phi_i\ast\Phi_i=F_{2i\delta}=(2i\delta-|x|)\chi_{[-2i\delta,2i\delta]}(x)$. We observe that
$$\Phi_i\ast\Phi_j=F_{(i+j)\delta}-F_{|i-j|\delta}\ .$$
This means that the matrix $A_N$ of $q_{\K,N,\delta}$ can be computed by computing only the values of
$\ell_\K(F_{i\delta})$ for $1\leq i\leq 2N$ and subtracting those values.

We then stop when the determinant of $A_N$ is negative or when $2N\delta\geq \BDyDF(\K)$. This does not
guarantee that we stop as soon as there is a negative eigenvalue. Indeed, consider the following sequence
of signatures:
$$(0,p,0)\to(1,p,0)\to(1,p,1)\to(0,p+1,2)\to\cdots$$
here a signature is $(z,p,m)$ where $z$ is the dimension of the kernel and $p$ (resp. $m$) the dimension
of a maximal subspace where $q_\K$ is positive (resp. negative) definite. We should have stopped when the
signature was $(1,p,1)$ however the determinant was zero there. Our algorithm will stop as soon as there
is an odd number of negative eigenvalues (and no zero) or we go above $\BDyDF(\K)$. Such unfavorable
sequence of signatures is however very unlikely and does not happen in practice.

The corresponding algorithm is presented in Function~\NDelta. We have added a limit $N_{\max}$ for $N$
which is not needed right now but will be used later. Note that $(\Phi_i)$ is a basis adapted to the
inclusion~\eqref{incnext} so that we only need to compute the edges of the matrix $A_N$ at each step. The
test $\det A<0$ in line~\ref{line:detA<0} can be efficiently implemented using Cholesky $LDL^*$
decomposition because it is incremental; moreover, if the last coefficient of $D$ is negative, the last
line of $L^{-1}$ is a vector $v$ such that $vA{}^tv<0$ so that we can check the result.

One way to use this function is to compute $T=\BDyDF(\K)$ and for some $N_{\max}\geq 2$, let
$\delta=\frac{L}{2N_{\max}}$ and $N=\NDelta(\K,\delta,N_{\max})$. Using the inclusion~\eqref{incmul}, we
see that $(N,\delta)$ is \good{} and that $N\leq N_{\max}$, so that we have improved the bound.
\enlargethispage{\baselineskip}
\subsection{Adaptive steps}
Unfortunately Function~\NDelta is not very efficient mostly for two reasons. To explain them and to
improve the function we introduce some extra notations.\\ For any $\delta>0$, let $N_\delta$ be the
minimal $N$ such that $(N,\delta)$ is \good. Observe that Function~\NDelta computes $N_\delta$, as long as
$N_\delta\leq N_{\max}$ and no zero eigenvalue prevents success. Obviously, using~\eqref{incnext}, we see
that for any $N\geq N_\delta$, $(N,\delta)$ is \good. We have observed numerically that the sequence
$N\delta_N$ is roughly decreasing, i.e. for most values of $N$ we have $N\delta_N\geq
(N+1)\delta_{N+1}$.\\
For any $N\geq 1$, let $\delta_N$ be the infimum of the $\delta$'s such that $(N,\delta)$ is \good. It is
not necessarily true that if $\delta\geq \delta_N$ then $(N,\delta)$ is \good, however we have never found
a counterexample. The function $\delta\mapsto\delta N_\delta$ is piecewise linear with discontinuities at
points where $N_\delta$ changes; the function is increasing in the linear pieces and decreasing at the
discontinuities. This means that if we take $0<\delta_2<\delta_1$ but we have $N_{\delta_2}>N_{\delta_1}$
then we may have $\delta_2N_{\delta_2}>\delta_1N_{\delta_1}$ so the bound we get for $\delta_2$ is not
necessarily as good as the one for $\delta_1$.\\
The resolution of Function~\NDelta is not very good: going from $N-1$ to $N$ the bound for the norm of the
prime ideals is multiplied by $e^{2\delta}$. This is the first reason reducing the efficiency of the
function. The second one is that if $N_{\max}$ is above $20$ or so, the number
$\delta=\frac{\log\BDyDF(\K)}{2N_{\max}}$ has no specific reason to be near $\delta_{N_\delta}$; as
discussed above, this means that we can get a better bound for $\K$ by choosing $\delta$ to be just above
either $\delta_{N_\delta}$ or $\delta_{1+N_\delta}$. Both reasons derive from the same facts and give a
bound for $\K$ that can be overestimated by at most $2\delta$ for the considered
$N=\NDelta(\K,\delta,N_{\max})$.

To improve the result, we can use once again inclusion~\eqref{incmul} and determine a good approximation
of $\delta_N$ for $N=2^n$. We determine first by dichotomy a $\delta_0$ such that $(N_0,\delta_0)$ is
\good{} for some $N_0\geq 1$. For any $k\geq 0$, we take $N_{k+1}=2N_k$ and determine by dichotomy a
$\delta_{k+1}$ such that $(N_{k+1},\delta_{k+1})$ is \good; we already know that $\frac{\delta_k}2$ is an
upper bound for $\delta_{k+1}$ and we can either use $0$ as a lower bound or try to find a lower bound not
too far from the upper bound because the upper bound is probably not too bad. The algorithm is described
in Function~\Bound. It uses a subfunction $\OptimalT(\K,N,T_\ell,T_h)$ which returns the smallest integer
$T\in[T_\ell,T_h]$ such that $\NDelta(\K,L/(2N),N)>0$. The algorithm does not return a bound below those
proved in~\ref{theo:Teasynt} and~\ref{coro:Bach4.01}.
\subsection{Further refinements}\label{subsec:algoend}
To improve the speed of the algorithm, we decided to make the dichotomy in~$\OptimalT(\K,N,T_\ell,T_h)$,
not on all value of $T$ but only on the norms of the prime ideals in $[T_\ell,T_h]$.\\
To reduce the time used to compute the determinants, we tried to use steps of width $4\delta$ in
$[-L/2,L/2]$ and of width $2\delta$ in the rest of $[-3L/4,3L/4]$, to halve the dimension of
$\SC_d(N,\delta)$. It worked in the sense that we found substantially the same $T$ faster. However we
decided that the total time of the algorithm is not high enough to justify the increase in code
complexity.
\begin{function}
  \SetAlgoLined
  \KwIn{a number field $\K$}
  \KwIn{a positive real $\delta$}
  \KwIn{a positive integer $N_{\max}$}
  \KwOut{an $N\leqslant N_{\max}$ such that $(N,\delta)$ is \good{} or $0$}

  $tab\leftarrow\text{$(2N_{\max}+1)$-dimensional array}$\;
  $tab[0]\leftarrow0$\;
  $A\leftarrow\text{$N_{\max}\times N_{\max}$ identity matrix}$\;
  $N\leftarrow 0$\;
  \While{$N<N_{\max}$}{
    $N\leftarrow N+1$\;
    $tab[2N-1]\leftarrow(2N-1)\GRHcheck(\K,(2N-1)\delta)$\;
    $tab[2N]\leftarrow2N\GRHcheck(\K,2N\delta)$\;
    \For{$i\leftarrow1$ \KwTo $N$}{
      $A[N,i]\leftarrow tab[N+i]-tab[N-i]$\;
      $A[i,N]\leftarrow A[N,i]$\;
    }
    \If{$\det A<0$}{\label{line:detA<0}
      \KwRet{$N$}\;
    }
  }
  \KwRet{$0$}\;
  \caption{NDelta($\K$,$\delta$,$N_{\max}$)}
\end{function}
\begin{function}
  \SetAlgoLined
  \KwIn{a number field $\K$}
  \KwOut{a bound for the norm of a system of generators of $\Cl_\K$}
  \uIf{$\lDK< \nK 2^{\nK}$}{
      $T_0\leftarrow 4\Big(\lDK+\llDK-(\g+\log 2\pi)\nK+1+(\nK+1)\frac{\log(7\lDK)}\lDK\Big)^2$\;
  }
  \Else{
      $T_0\leftarrow 4(\lDK+\llDK-(\g+\log 2\pi)\nK+1)^2$\;
  }
  $T_0\leftarrow \min\big(T_0,4.01\lDKsq\big)$\;
  $N\leftarrow8$;
  $\delta\leftarrow0.0625$\;
  \While{$\NDelta(\K,\delta,N)=0$}{
    $\delta\leftarrow\delta+0.0625$\;
  }
  $T_h\leftarrow \OptimalT(\K,N,e^{2N\,(\delta-0.0625)},e^{2N\,\delta})$\;
  $T\leftarrow T_h+1$\;
  \While{$T_h<T\mathop{||}T>T_0$}{
    $T\leftarrow T_h$;
    $N\leftarrow2N$\;
    $T_h\leftarrow \OptimalT(\K,N,1,T_h)$\;
  }
  \KwRet{$T$}\;
  \caption{Bound($\K$)}
\end{function}
\clearpage
\subsection{Theoretical performance}
We denote $T_1(\K)$ the result of Function~\Bound. The algorithm reaches bounds of the same quality as
those of $\TeK$.
\begin{coro}\label{coro:boundT1}
Assume \GRH. Then Function~\Bound terminates. Moreover we have
\begin{align*}
\sqrt{T_1(\K)}&\leq 2\lDK
                 +  2\llDK
                 +  2
                 -  2(\g+\log 2\pi)\nK
                 +  2(\nK+1)\frac{\log(7\lDK)}{\lDK},                                       \\
\sqrt{T_1(\K)}&\leq 2\lDK
                 +  2\llDK
                 +  2
                 -  2(\g+\log 2\pi)\nK\qquad\text{if }\lDK\geq \nK 2^{\nK},                 \\
T_1(\K)       &\leq 4\big(1+\big(2\pi e^{\g}\big)^{-\nK}\big)^2\lDKsq,                      \\
T_1(\K)       &\leq 4\lDKsq\qquad\text{if }\lDK\leq \frac{1}{e}\big(2\pi e^{\g}\big)^{\nK}, \\
T_1(\K)       &\leq 4.01\lDKsq.
\end{align*}
\end{coro}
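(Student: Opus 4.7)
The plan is to show (i) Function~\Bound terminates, (ii) its output satisfies $T_1(\K)\leq T_0$, where $T_0$ is the value computed at the very beginning of \Bound, and (iii) $T_0$ already encodes the five stated bounds, except for one edge case where we must additionally invoke the iterative refinement.

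For (i), combining Theorem~\ref{theo:Teasynt} and Corollary~\ref{coro:Bach4.01} gives $\TeK\leq T_0$. Theorem~\ref{theo:Phie} exhibits $\Phi_\e$ such that $\Phi_\e\ast\Phi_\e$ realizes $\TeK$, and this weight can be approximated in $L^\infty$ by step-function convolutions from $\SC_d(N,\delta)$ (Remark~\ref{rem:varepsilon}), with $e^{2N\delta}$ arbitrarily close to $\TeK$. Continuity of $\ell_\K$ then makes $(N,\delta)$ \good{} for $N$ large enough, so \NDelta and \OptimalT each terminate. Since enlarging $N$ enlarges the admissible step-function space, the sequence of $T_h$ values in the outer loop is non-increasing and bounded below, and by the same approximation argument it descends to a value $\leq T_0$; once $T_h$ stabilizes, the loop exits with $T_h\geq T$ and $T\leq T_0$, which already yields (ii).

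The inequalities (1), (2) and (5) are then immediate from the definition of $T_0$: in the first branch, $T_0$ is the square of the right-hand side of (1), and in the second branch it is the square of (2), which also implies (1) since $(\nK+1)\log(7\lDK)/\lDK\geq 0$; the final $\min$ with $4.01\lDKsq$ gives (5). For (3), I split into cases. If $\nK=2$, then $(2\pi e^\g)^{-2}$ is large enough that $4(1+(2\pi e^\g)^{-2})^2>4.01$, so $T_0\leq 4.01\lDKsq$ already gives (3). For $\nK\geq 3$ with $\lDK\leq \nK 2^{\nK}$, the sign analysis of $f(\lDK,\nK)$ from the proof of Corollary~\ref{coro:2} shows that the first-statement bound of Theorem~\ref{theo:Teasynt} is $\leq 4\lDKsq$, hence $\leq 4(1+(2\pi e^\g)^{-\nK})^2\lDKsq$. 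For $\lDK\geq \nK 2^{\nK}$, the same corollary's argument bounds the second-statement value directly. Inequality (4) follows by the analogous three-case analysis under the sharper hypothesis $\lDK\leq(2\pi e^{\g})^{\nK}/e$.

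The main obstacle is the subcase $\nK=2$ with small $\lDK$ in (4): here $T_0\leq 4.01\lDKsq$ is not itself $\leq 4\lDKsq$, so the bound cannot come from the a priori value of $T_0$. One overcomes this by noting that Corollary~\ref{coro:2} establishes $\TeK\leq 4\lDKsq$ in this range through a direct use of Theorem~\ref{theo:Phie} with $T_0=2\lDK$ (distinct from the branches used in \Bound's definition of $T_0$); Remark~\ref{rem:varepsilon} then guarantees that the outer while-loop continues to refine $T_h$ below $4\lDKsq<T_0$ until $T_h=T$, giving $T_1\leq T\leq 4\lDKsq$ and closing the last case.
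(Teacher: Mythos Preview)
Your approach matches the paper's: approximate $F_\e$ by step functions via Remark~\ref{rem:varepsilon} to prove termination and $T_1(\K)\leq T_0$, then read off (1), (2), (5) from the definition of $T_0$ and derive (3), (4). You are in fact more careful than the paper, which simply asserts that the intermediate inequalities are ``consequences of the first two'' without splitting into cases; your observation that (3) for $\nK=2$ follows from (5), and your separate treatment of (4), make that deduction explicit.

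One small residual point in your last paragraph: the outer while-loop's exit condition compares $T$ to $T_0$, not to $4\lDKsq$, so in principle the loop could stop at some integer $T^*\in(4\lDKsq,T_0]$ if $T_h$ momentarily stagnates there. To close this you should note that Corollary~\ref{coro:2} yields an $F_\e$ satisfying the \emph{strict} inequality~\eqref{theoeq} at $T=4\lDKsq$, so the approximation of Remark~\ref{rem:varepsilon} produces, for $N$ large enough, a step function working at every integer strictly above $\TeK$ and in particular at $T^*-1$; hence $T_h$ cannot stabilize above $4\lDKsq$ unless $T^*-1<\TeK$, which forces $T^*\leq\lceil 4\lDKsq\rceil$. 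This is a minor technicality that the paper's own proof skips entirely.
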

\begin{proof}
Consider one of the bounds of Theorem~\ref{theo:Teasynt} or of Corollary~\ref{coro:Bach4.01}. It is
associated to a certain $F_{\e}=\Phie\ast\Phie$ with a certain $a=\log T_0$ and $T$ given by the bound.
As in Remark~\ref{rem:varepsilon}, For any $\varepsilon>0$, there exists a step function $\Phi$ with
support in $[-L/2,L/2]$ and $2^N$ steps, for $N$ large enough, such that
$\|F-\Phi\ast\Phi\|_\infty<\varepsilon$. Since the inequality in~\eqref{theoeq} is strict, we can take
$\varepsilon$ small enough so that $\Phi\ast\Phi$ satisfies~\eqref{theoeq}. Hence, for $N$ large enough,
the algorithm will find a negative eigenvalue in $\SC_d(2^N,2^{-N-1}L)$ and hence it will terminate. The
bound $T$ it will give satisfies obviously the first two and last inequality of the statement of the
corollary. Since the intermediate inequalities are consequences of the first two, $T$ will also satisfy
the intermediates inequalities.
\end{proof}

\subsection{Effective performance}\label{subsec:tests}
\subsubsection{Various fields}
We tested the algorithm on several fields. Let first $\K=\QM[x]/(P)$ where
$$\catcode`\*=\active\def*{}
P=x^3 + 559752270111028720*x + 55137512477462689.
$$
The polynomial $P$ has been chosen so that for all primes $2\leq p\leq 53$ there are two prime ideals of
norms $p$ and $p^2$. This ensures that there are lots of small norms of prime ideals. We have
$T(\K)=19162$. There are $2148$ non-zero prime ideals with norms up to $T(\K)$. We found that
$T_1(\K)=11071$ and that there are $1343$ non-zero prime ideals of norms up to $T_1(\K)$.

We tested also the algorithm on the set of $4686$ fields of degree $2$ to $27$ and small discriminant
coming from a benchmark of~\cite{PARI2}. The mean value of $T_1(\K)/T(\K)$ for those fields is lower than
$1/2$.

We have tested the cyclotomic fields $\K=\QM[\zeta_n]$ for $n\leq 250$. For them we have found that the
quotient $T_1(\K)/T(\K)$ becomes smaller and smaller as the degree increases, reaching the value $1/2$ for
the higher order cyclotomic fields. However, we have observed that the fraction is generally higher than
what we get for the generic fields with comparable degree and discriminant. Certainly cyclotomic fields
are not typical fields: for instance for them the class number grows more than exponentially as a
function of the order~\cite[Th.~4.20]{Washington1}. The weight function that we observe in the tests for
generic fields contains several parameters and therefore could have generic profiles but actually always
shows two bumps, one centered at the origin and one near the end of the support. On the contrary, the
weight producing $T(\K)$ has a unique bump in the origin, by design. The fact that the original algorithm
already produces a good bound for cyclotomic of small order in some sense means that the second bump is
not necessary, and this is probably due to the existence of a lots of ideals of small norms. However, we
admit we do not have any convincing explanation of this phenomenon.

\subsubsection{Pure fields, small discriminants}
We computed $T(\K)$ and $T_1(\K)$ for fields of the form $\QM[x]/(P)$ with $P=x^n\pm p$ and $p$ is the
first prime after $2^a$ for a certain family of integers $n$ and $a$ such that $\lDK\leq 250$. We limited
the discriminant because, while at the time of writing the record for which the Buchmann algorithm has
been successfully completed has $\lDK\geq 646$, this has been done for only very few fields with $\lDK\geq
100\log 10\simeq 230$. We computed the family of $\frac{T_1(\K)}{T(\K)}$ for each fixed degree. The
results are presented in Figure~\ref{fig1}. We can see that in the right-half of the graph, the fields
adopt the asymptotical behavior where $\frac{T_1(\K)}{T(\K)}\asymp(\llDK)^{-2}$.\\
Let $t(\K)$ denote the time needed to compute $T(\K)$ and $t_1(\K)$ the additional time needed to compute
$T_1(\K)$. In Figure~\ref{fig2}, we have drawn the points $\frac{t_1(\K)}{t(\K)}$ for the four families of
fields we have tested. We have removed three points with $\frac{t_1(\K)}{t(\K)}\geq 25$ (in details:
$30.17$, $30.31$ and $47.19$) whose $\lDK$ is respectively $160.81$, $162.20$ and $167.74$.
%
%
In spite of the relatively large value for the quotient, the value of $t_1(\K)$ in all cases has been
lower than $2$s
(and actually larger than $0.35$s in only $22$ out of the $8308$ fields, including the three fields for
which $t_1(\K)/t(\K)\geq 25$, all having $\lDK\geq 153$).
This shows that the time needed to compute $T_1(\K)$ is limited anyway with respect to the time needed for
the full Buchmann algorithm.
\subsubsection{Pure fields}
We once again computed $T(\K)$ and $T_1(\K)$ for fields of the form $\QM[x]/(P)$ with $P=x^n\pm p$ and $p$
is the first prime after $10^a$ for a certain family of integers $n$ and $a$.
The graph of $\frac{T_1(\K)}{T(\K)}$ looks like a continuation of the right-half of Figure~\ref{fig1} so
that we do not draw it once again. The graph of $\frac{T_1(\K)}{T(\K)}(\llDK)^2$ is much more regular and
looks to have a non-zero limit, see Figure~\ref{fig3} below. We plotted the graph of
$\frac{T_1(\K)}{\lDKsq}$ for the same fields in Figure~\ref{fig4} as well. We computed the mean of
$\frac{T_1(\K)}{T(\K)}(\llDK)^2$ and the maximum of $\frac{T_1(\K)}{\lDKsq}$ for each fixed degree. The
results are summarized below:
\[
\begin{array}{l|r|r|c|c}
P        & a\leq & \lDK\leq & \text{mean of }\frac{T_1(\K)}{T(\K)}(\llDK)^2 & 1-\max\big(\frac{T_1(\K)}{\lDKsq}\big) \\
\hline
x^2-p    & 3999  &  9212    & 13.19 & 2\cdot 10^{-5} \\
x^6+p    & 1199  & 13818    & 13.38 & 9\cdot 10^{-6} \\
x^{21}-p &  328  & 15169    & 13.68 & 4\cdot 10^{-5}
\end{array}
\]
%
The small discriminants are (obviously) much less sensitive to the new algorithm. We reduced the range for
each series to have $\lDK\leq 500$. The results are as follows:
\[
\begin{array}{l|r|c|c}
P        & a\leq & \text{mean of }\frac{T_1(\K)}{T(\K)}(\llDK)^2 & 1-\max\big(\frac{T_1(\K)}{\lDKsq}\big) \\
\hline
x^2-p    &   218 & 12.35 & 0.018 \\
x^6+p    &    43 & 13.66 & 0.073 \\
x^{21}-p &    10 & 17.19 & 0.279
\end{array}
\]
%
\subsubsection{Biquadratic fields}
We repeated the computations above also for biquadratic fields $\QM[\sqrt{p_1},\sqrt{p_2}]$ where each
$p_i$ is the first prime after $2^{a_i}$ (respectively $10^{a_i}$) for certain families of integers $a_i$
and included them in Figures~\ref{fig1}--\ref{fig4}.\\
In the case where $p_i$ is the first prime after $10^{a_i}$, we found that the mean of
$\frac{T_1(\K)}{T(\K)}(\llDK)^2$ is $13.63$ for the $7119$ fields computed, and $13.88$ if we restrict the
family to the $1537$ ones with $\lDK\leq 500$, while the maximum of $\frac{T_1(\K)}{\lDKsq}$ is lower than
$1.0038$ for all fields and $0.957$ for the fields with $\lDK\leq 500$.\\
%
%
\begin{fixedfig}
\begingroup
  \makeatletter
  \providecommand\color[2][]{%
    \GenericError{(gnuplot) \space\space\space\@spaces}{%
      Package color not loaded in conjunction with
      terminal option `colourtext'%
    }{See the gnuplot documentation for explanation.%
    }{Either use 'blacktext' in gnuplot or load the package
      color.sty in LaTeX.}%
    \renewcommand\color[2][]{}%
  }%
  \providecommand\includegraphics[2][]{%
    \GenericError{(gnuplot) \space\space\space\@spaces}{%
      Package graphicx or graphics not loaded%
    }{See the gnuplot documentation for explanation.%
    }{The gnuplot epslatex terminal needs graphicx.sty or graphics.sty.}%
    \renewcommand\includegraphics[2][]{}%
  }%
  \providecommand\rotatebox[2]{#2}%
  \@ifundefined{ifGPcolor}{%
    \newif\ifGPcolor
    \GPcolorfalse
  }{}%
  \@ifundefined{ifGPblacktext}{%
    \newif\ifGPblacktext
    \GPblacktexttrue
  }{}%
  \let\gplgaddtomacro\g@addto@macro
  \gdef\gplbacktext{}%
  \gdef\gplfronttext{}%
  \makeatother
  \ifGPblacktext
    \def\colorrgb#1{}%
    \def\colorgray#1{}%
  \else
    \ifGPcolor
      \def\colorrgb#1{\color[rgb]{#1}}%
      \def\colorgray#1{\color[gray]{#1}}%
      \expandafter\def\csname LTw\endcsname{\color{white}}%
      \expandafter\def\csname LTb\endcsname{\color{black}}%
      \expandafter\def\csname LTa\endcsname{\color{black}}%
      \expandafter\def\csname LT0\endcsname{\color[rgb]{1,0,0}}%
      \expandafter\def\csname LT1\endcsname{\color[rgb]{0,1,0}}%
      \expandafter\def\csname LT2\endcsname{\color[rgb]{0,0,1}}%
      \expandafter\def\csname LT3\endcsname{\color[rgb]{1,0,1}}%
      \expandafter\def\csname LT4\endcsname{\color[rgb]{0,1,1}}%
      \expandafter\def\csname LT5\endcsname{\color[rgb]{1,1,0}}%
      \expandafter\def\csname LT6\endcsname{\color[rgb]{0,0,0}}%
      \expandafter\def\csname LT7\endcsname{\color[rgb]{1,0.3,0}}%
      \expandafter\def\csname LT8\endcsname{\color[rgb]{0.5,0.5,0.5}}%
    \else
      \def\colorrgb#1{\color{black}}%
      \def\colorgray#1{\color[gray]{#1}}%
      \expandafter\def\csname LTw\endcsname{\color{white}}%
      \expandafter\def\csname LTb\endcsname{\color{black}}%
      \expandafter\def\csname LTa\endcsname{\color{black}}%
      \expandafter\def\csname LT0\endcsname{\color{black}}%
      \expandafter\def\csname LT1\endcsname{\color{black}}%
      \expandafter\def\csname LT2\endcsname{\color{black}}%
      \expandafter\def\csname LT3\endcsname{\color{black}}%
      \expandafter\def\csname LT4\endcsname{\color{black}}%
      \expandafter\def\csname LT5\endcsname{\color{black}}%
      \expandafter\def\csname LT6\endcsname{\color{black}}%
      \expandafter\def\csname LT7\endcsname{\color{black}}%
      \expandafter\def\csname LT8\endcsname{\color{black}}%
    \fi
  \fi
    \setlength{\unitlength}{0.0500bp}%
    \ifx\gptboxheight\undefined%
      \newlength{\gptboxheight}%
      \newlength{\gptboxwidth}%
      \newsavebox{\gptboxtext}%
    \fi%
    \setlength{\fboxrule}{0.5pt}%
    \setlength{\fboxsep}{1pt}%
\begin{picture}(8502.00,3968.00)%
    \gplgaddtomacro\gplbacktext{%
      \csname LTb\endcsname%
      \put(594,440){\makebox(0,0)[r]{\strut{}$0$}}%
      \put(594,766){\makebox(0,0)[r]{\strut{}$0.1$}}%
      \put(594,1093){\makebox(0,0)[r]{\strut{}$0.2$}}%
      \put(594,1419){\makebox(0,0)[r]{\strut{}$0.3$}}%
      \put(594,1745){\makebox(0,0)[r]{\strut{}$0.4$}}%
      \put(594,2072){\makebox(0,0)[r]{\strut{}$0.5$}}%
      \put(594,2398){\makebox(0,0)[r]{\strut{}$0.6$}}%
      \put(594,2724){\makebox(0,0)[r]{\strut{}$0.7$}}%
      \put(594,3050){\makebox(0,0)[r]{\strut{}$0.8$}}%
      \put(594,3377){\makebox(0,0)[r]{\strut{}$0.9$}}%
      \put(594,3703){\makebox(0,0)[r]{\strut{}$1$}}%
      \put(789,220){\makebox(0,0){\strut{}$0$}}%
      \put(2120,220){\makebox(0,0){\strut{}$50$}}%
      \put(3451,220){\makebox(0,0){\strut{}$100$}}%
      \put(4783,220){\makebox(0,0){\strut{}$150$}}%
      \put(6114,220){\makebox(0,0){\strut{}$200$}}%
      \put(7445,220){\makebox(0,0){\strut{}$250$}}%
      \put(7577,440){\makebox(0,0)[l]{\strut{}$0$}}%
      \put(7577,766){\makebox(0,0)[l]{\strut{}$0.1$}}%
      \put(7577,1093){\makebox(0,0)[l]{\strut{}$0.2$}}%
      \put(7577,1419){\makebox(0,0)[l]{\strut{}$0.3$}}%
      \put(7577,1745){\makebox(0,0)[l]{\strut{}$0.4$}}%
      \put(7577,2072){\makebox(0,0)[l]{\strut{}$0.5$}}%
      \put(7577,2398){\makebox(0,0)[l]{\strut{}$0.6$}}%
      \put(7577,2724){\makebox(0,0)[l]{\strut{}$0.7$}}%
      \put(7577,3050){\makebox(0,0)[l]{\strut{}$0.8$}}%
      \put(7577,3377){\makebox(0,0)[l]{\strut{}$0.9$}}%
      \put(7577,3703){\makebox(0,0)[l]{\strut{}$1$}}%
    }%
    \gplgaddtomacro\gplfronttext{%
      \csname LTb\endcsname%
      \put(5861,1273){\makebox(0,0)[l]{\strut{}quadratic}}%
      \csname LTb\endcsname%
      \put(5861,1053){\makebox(0,0)[l]{\strut{}biquadratic}}%
      \csname LTb\endcsname%
      \put(5861,833){\makebox(0,0)[l]{\strut{}degree 6}}%
      \csname LTb\endcsname%
      \put(5861,613){\makebox(0,0)[l]{\strut{}degree 21}}%
    }%
    \gplbacktext
    \put(0,0){\includegraphics{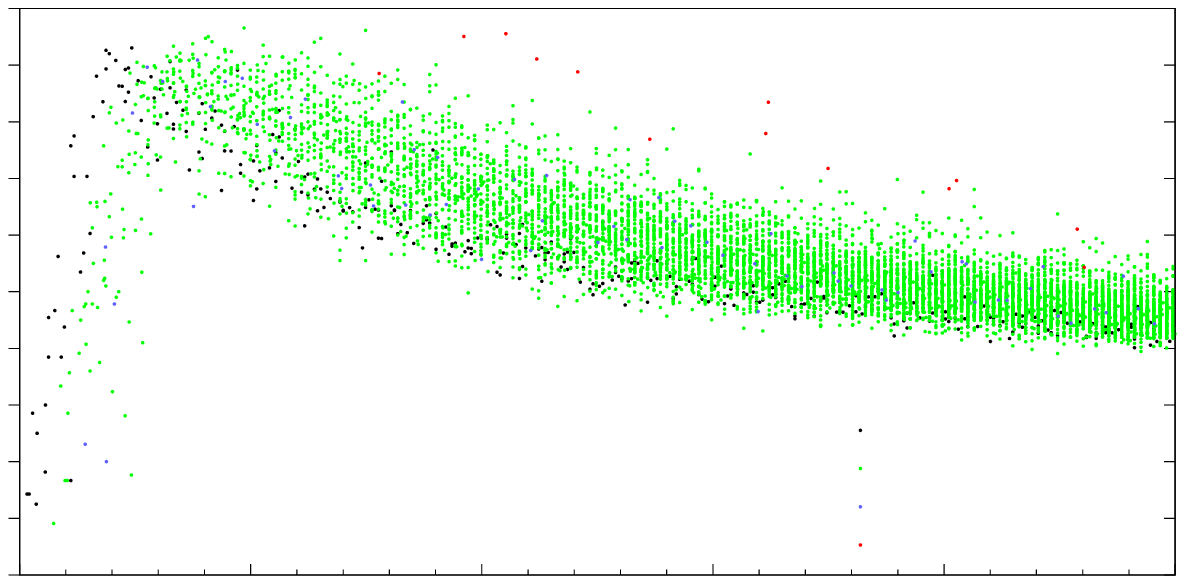}}%
    \gplfronttext
  \end{picture}%
\endgroup
\\
\caption{$\frac{T_1(\K)}{T(\K)}$ for some fields of small discriminant; in abscissa $\lDK$.}%
\label{fig1}
\end{fixedfig}
\enlargethispage{4\baselineskip}
\begin{fixedfig}
\begingroup
  \makeatletter
  \providecommand\color[2][]{%
    \GenericError{(gnuplot) \space\space\space\@spaces}{%
      Package color not loaded in conjunction with
      terminal option `colourtext'%
    }{See the gnuplot documentation for explanation.%
    }{Either use 'blacktext' in gnuplot or load the package
      color.sty in LaTeX.}%
    \renewcommand\color[2][]{}%
  }%
  \providecommand\includegraphics[2][]{%
    \GenericError{(gnuplot) \space\space\space\@spaces}{%
      Package graphicx or graphics not loaded%
    }{See the gnuplot documentation for explanation.%
    }{The gnuplot epslatex terminal needs graphicx.sty or graphics.sty.}%
    \renewcommand\includegraphics[2][]{}%
  }%
  \providecommand\rotatebox[2]{#2}%
  \@ifundefined{ifGPcolor}{%
    \newif\ifGPcolor
    \GPcolorfalse
  }{}%
  \@ifundefined{ifGPblacktext}{%
    \newif\ifGPblacktext
    \GPblacktexttrue
  }{}%
  \let\gplgaddtomacro\g@addto@macro
  \gdef\gplbacktext{}%
  \gdef\gplfronttext{}%
  \makeatother
  \ifGPblacktext
    \def\colorrgb#1{}%
    \def\colorgray#1{}%
  \else
    \ifGPcolor
      \def\colorrgb#1{\color[rgb]{#1}}%
      \def\colorgray#1{\color[gray]{#1}}%
      \expandafter\def\csname LTw\endcsname{\color{white}}%
      \expandafter\def\csname LTb\endcsname{\color{black}}%
      \expandafter\def\csname LTa\endcsname{\color{black}}%
      \expandafter\def\csname LT0\endcsname{\color[rgb]{1,0,0}}%
      \expandafter\def\csname LT1\endcsname{\color[rgb]{0,1,0}}%
      \expandafter\def\csname LT2\endcsname{\color[rgb]{0,0,1}}%
      \expandafter\def\csname LT3\endcsname{\color[rgb]{1,0,1}}%
      \expandafter\def\csname LT4\endcsname{\color[rgb]{0,1,1}}%
      \expandafter\def\csname LT5\endcsname{\color[rgb]{1,1,0}}%
      \expandafter\def\csname LT6\endcsname{\color[rgb]{0,0,0}}%
      \expandafter\def\csname LT7\endcsname{\color[rgb]{1,0.3,0}}%
      \expandafter\def\csname LT8\endcsname{\color[rgb]{0.5,0.5,0.5}}%
    \else
      \def\colorrgb#1{\color{black}}%
      \def\colorgray#1{\color[gray]{#1}}%
      \expandafter\def\csname LTw\endcsname{\color{white}}%
      \expandafter\def\csname LTb\endcsname{\color{black}}%
      \expandafter\def\csname LTa\endcsname{\color{black}}%
      \expandafter\def\csname LT0\endcsname{\color{black}}%
      \expandafter\def\csname LT1\endcsname{\color{black}}%
      \expandafter\def\csname LT2\endcsname{\color{black}}%
      \expandafter\def\csname LT3\endcsname{\color{black}}%
      \expandafter\def\csname LT4\endcsname{\color{black}}%
      \expandafter\def\csname LT5\endcsname{\color{black}}%
      \expandafter\def\csname LT6\endcsname{\color{black}}%
      \expandafter\def\csname LT7\endcsname{\color{black}}%
      \expandafter\def\csname LT8\endcsname{\color{black}}%
    \fi
  \fi
    \setlength{\unitlength}{0.0500bp}%
    \ifx\gptboxheight\undefined%
      \newlength{\gptboxheight}%
      \newlength{\gptboxwidth}%
      \newsavebox{\gptboxtext}%
    \fi%
    \setlength{\fboxrule}{0.5pt}%
    \setlength{\fboxsep}{1pt}%
\begin{picture}(8502.00,3968.00)%
    \gplgaddtomacro\gplbacktext{%
      \csname LTb\endcsname%
      \put(462,440){\makebox(0,0)[r]{\strut{}$0$}}%
      \put(462,1093){\makebox(0,0)[r]{\strut{}$5$}}%
      \put(462,1745){\makebox(0,0)[r]{\strut{}$10$}}%
      \put(462,2398){\makebox(0,0)[r]{\strut{}$15$}}%
      \put(462,3050){\makebox(0,0)[r]{\strut{}$20$}}%
      \put(462,3703){\makebox(0,0)[r]{\strut{}$25$}}%
      \put(657,220){\makebox(0,0){\strut{}$0$}}%
      \put(2041,220){\makebox(0,0){\strut{}$50$}}%
      \put(3425,220){\makebox(0,0){\strut{}$100$}}%
      \put(4809,220){\makebox(0,0){\strut{}$150$}}%
      \put(6193,220){\makebox(0,0){\strut{}$200$}}%
      \put(7577,220){\makebox(0,0){\strut{}$250$}}%
      \put(7709,440){\makebox(0,0)[l]{\strut{}$0$}}%
      \put(7709,1093){\makebox(0,0)[l]{\strut{}$5$}}%
      \put(7709,1745){\makebox(0,0)[l]{\strut{}$10$}}%
      \put(7709,2398){\makebox(0,0)[l]{\strut{}$15$}}%
      \put(7709,3050){\makebox(0,0)[l]{\strut{}$20$}}%
      \put(7709,3703){\makebox(0,0)[l]{\strut{}$25$}}%
    }%
    \gplgaddtomacro\gplfronttext{%
    }%
    \gplbacktext
    \put(0,0){\includegraphics{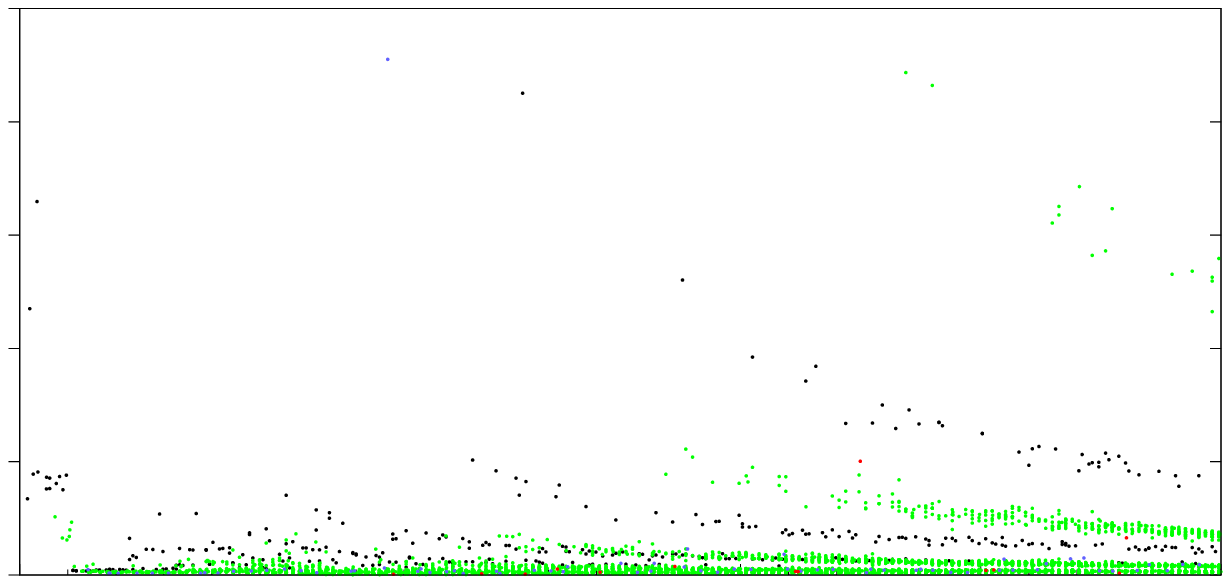}}%
    \gplfronttext
  \end{picture}%
\endgroup
\\
\caption{$\frac{t_1(\K)}{t(\K)}$ for some fields of small discriminant; in abscissa $\lDK$.}%
\label{fig2}%
\end{fixedfig}
\begin{fixedfig}
\begingroup
  \makeatletter
  \providecommand\color[2][]{%
    \GenericError{(gnuplot) \space\space\space\@spaces}{%
      Package color not loaded in conjunction with
      terminal option `colourtext'%
    }{See the gnuplot documentation for explanation.%
    }{Either use 'blacktext' in gnuplot or load the package
      color.sty in LaTeX.}%
    \renewcommand\color[2][]{}%
  }%
  \providecommand\includegraphics[2][]{%
    \GenericError{(gnuplot) \space\space\space\@spaces}{%
      Package graphicx or graphics not loaded%
    }{See the gnuplot documentation for explanation.%
    }{The gnuplot epslatex terminal needs graphicx.sty or graphics.sty.}%
    \renewcommand\includegraphics[2][]{}%
  }%
  \providecommand\rotatebox[2]{#2}%
  \@ifundefined{ifGPcolor}{%
    \newif\ifGPcolor
    \GPcolorfalse
  }{}%
  \@ifundefined{ifGPblacktext}{%
    \newif\ifGPblacktext
    \GPblacktexttrue
  }{}%
  \let\gplgaddtomacro\g@addto@macro
  \gdef\gplbacktext{}%
  \gdef\gplfronttext{}%
  \makeatother
  \ifGPblacktext
    \def\colorrgb#1{}%
    \def\colorgray#1{}%
  \else
    \ifGPcolor
      \def\colorrgb#1{\color[rgb]{#1}}%
      \def\colorgray#1{\color[gray]{#1}}%
      \expandafter\def\csname LTw\endcsname{\color{white}}%
      \expandafter\def\csname LTb\endcsname{\color{black}}%
      \expandafter\def\csname LTa\endcsname{\color{black}}%
      \expandafter\def\csname LT0\endcsname{\color[rgb]{1,0,0}}%
      \expandafter\def\csname LT1\endcsname{\color[rgb]{0,1,0}}%
      \expandafter\def\csname LT2\endcsname{\color[rgb]{0,0,1}}%
      \expandafter\def\csname LT3\endcsname{\color[rgb]{1,0,1}}%
      \expandafter\def\csname LT4\endcsname{\color[rgb]{0,1,1}}%
      \expandafter\def\csname LT5\endcsname{\color[rgb]{1,1,0}}%
      \expandafter\def\csname LT6\endcsname{\color[rgb]{0,0,0}}%
      \expandafter\def\csname LT7\endcsname{\color[rgb]{1,0.3,0}}%
      \expandafter\def\csname LT8\endcsname{\color[rgb]{0.5,0.5,0.5}}%
    \else
      \def\colorrgb#1{\color{black}}%
      \def\colorgray#1{\color[gray]{#1}}%
      \expandafter\def\csname LTw\endcsname{\color{white}}%
      \expandafter\def\csname LTb\endcsname{\color{black}}%
      \expandafter\def\csname LTa\endcsname{\color{black}}%
      \expandafter\def\csname LT0\endcsname{\color{black}}%
      \expandafter\def\csname LT1\endcsname{\color{black}}%
      \expandafter\def\csname LT2\endcsname{\color{black}}%
      \expandafter\def\csname LT3\endcsname{\color{black}}%
      \expandafter\def\csname LT4\endcsname{\color{black}}%
      \expandafter\def\csname LT5\endcsname{\color{black}}%
      \expandafter\def\csname LT6\endcsname{\color{black}}%
      \expandafter\def\csname LT7\endcsname{\color{black}}%
      \expandafter\def\csname LT8\endcsname{\color{black}}%
    \fi
  \fi
    \setlength{\unitlength}{0.0500bp}%
    \ifx\gptboxheight\undefined%
      \newlength{\gptboxheight}%
      \newlength{\gptboxwidth}%
      \newsavebox{\gptboxtext}%
    \fi%
    \setlength{\fboxrule}{0.5pt}%
    \setlength{\fboxsep}{1pt}%
\begin{picture}(8502.00,3968.00)%
    \gplgaddtomacro\gplbacktext{%
      \csname LTb\endcsname%
      \put(462,440){\makebox(0,0)[r]{\strut{}$10$}}%
      \put(462,906){\makebox(0,0)[r]{\strut{}$11$}}%
      \put(462,1372){\makebox(0,0)[r]{\strut{}$12$}}%
      \put(462,1838){\makebox(0,0)[r]{\strut{}$13$}}%
      \put(462,2305){\makebox(0,0)[r]{\strut{}$14$}}%
      \put(462,2771){\makebox(0,0)[r]{\strut{}$15$}}%
      \put(462,3237){\makebox(0,0)[r]{\strut{}$16$}}%
      \put(462,3703){\makebox(0,0)[r]{\strut{}$17$}}%
      \put(657,220){\makebox(0,0){\strut{}$0$}}%
      \put(2387,220){\makebox(0,0){\strut{}$4000$}}%
      \put(4117,220){\makebox(0,0){\strut{}$8000$}}%
      \put(5847,220){\makebox(0,0){\strut{}$12000$}}%
      \put(7577,220){\makebox(0,0){\strut{}$16000$}}%
      \put(7709,440){\makebox(0,0)[l]{\strut{}$10$}}%
      \put(7709,906){\makebox(0,0)[l]{\strut{}$11$}}%
      \put(7709,1372){\makebox(0,0)[l]{\strut{}$12$}}%
      \put(7709,1838){\makebox(0,0)[l]{\strut{}$13$}}%
      \put(7709,2305){\makebox(0,0)[l]{\strut{}$14$}}%
      \put(7709,2771){\makebox(0,0)[l]{\strut{}$15$}}%
      \put(7709,3237){\makebox(0,0)[l]{\strut{}$16$}}%
      \put(7709,3703){\makebox(0,0)[l]{\strut{}$17$}}%
    }%
    \gplgaddtomacro\gplfronttext{%
      \csname LTb\endcsname%
      \put(5993,3530){\makebox(0,0)[l]{\strut{}quadratic}}%
      \csname LTb\endcsname%
      \put(5993,3310){\makebox(0,0)[l]{\strut{}biquadratic}}%
      \csname LTb\endcsname%
      \put(5993,3090){\makebox(0,0)[l]{\strut{}degree 6}}%
      \csname LTb\endcsname%
      \put(5993,2870){\makebox(0,0)[l]{\strut{}degree 21}}%
    }%
    \gplbacktext
    \put(0,0){\includegraphics{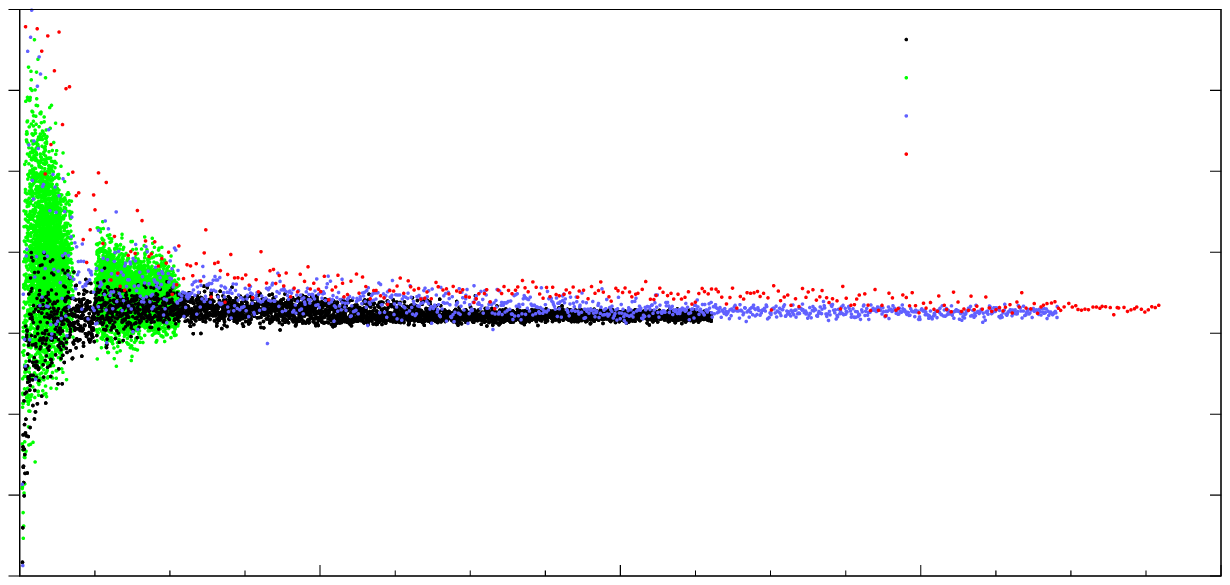}}%
    \gplfronttext
  \end{picture}%
\endgroup
\\
\caption{$\frac{T_1(\K)}{T(\K)}(\llDK)^2$ for some fields; in abscissa $\lDK$.}%
\label{fig3}%
\end{fixedfig}
\begin{fixedfig}
\begingroup
  \makeatletter
  \providecommand\color[2][]{%
    \GenericError{(gnuplot) \space\space\space\@spaces}{%
      Package color not loaded in conjunction with
      terminal option `colourtext'%
    }{See the gnuplot documentation for explanation.%
    }{Either use 'blacktext' in gnuplot or load the package
      color.sty in LaTeX.}%
    \renewcommand\color[2][]{}%
  }%
  \providecommand\includegraphics[2][]{%
    \GenericError{(gnuplot) \space\space\space\@spaces}{%
      Package graphicx or graphics not loaded%
    }{See the gnuplot documentation for explanation.%
    }{The gnuplot epslatex terminal needs graphicx.sty or graphics.sty.}%
    \renewcommand\includegraphics[2][]{}%
  }%
  \providecommand\rotatebox[2]{#2}%
  \@ifundefined{ifGPcolor}{%
    \newif\ifGPcolor
    \GPcolorfalse
  }{}%
  \@ifundefined{ifGPblacktext}{%
    \newif\ifGPblacktext
    \GPblacktexttrue
  }{}%
  \let\gplgaddtomacro\g@addto@macro
  \gdef\gplbacktext{}%
  \gdef\gplfronttext{}%
  \makeatother
  \ifGPblacktext
    \def\colorrgb#1{}%
    \def\colorgray#1{}%
  \else
    \ifGPcolor
      \def\colorrgb#1{\color[rgb]{#1}}%
      \def\colorgray#1{\color[gray]{#1}}%
      \expandafter\def\csname LTw\endcsname{\color{white}}%
      \expandafter\def\csname LTb\endcsname{\color{black}}%
      \expandafter\def\csname LTa\endcsname{\color{black}}%
      \expandafter\def\csname LT0\endcsname{\color[rgb]{1,0,0}}%
      \expandafter\def\csname LT1\endcsname{\color[rgb]{0,1,0}}%
      \expandafter\def\csname LT2\endcsname{\color[rgb]{0,0,1}}%
      \expandafter\def\csname LT3\endcsname{\color[rgb]{1,0,1}}%
      \expandafter\def\csname LT4\endcsname{\color[rgb]{0,1,1}}%
      \expandafter\def\csname LT5\endcsname{\color[rgb]{1,1,0}}%
      \expandafter\def\csname LT6\endcsname{\color[rgb]{0,0,0}}%
      \expandafter\def\csname LT7\endcsname{\color[rgb]{1,0.3,0}}%
      \expandafter\def\csname LT8\endcsname{\color[rgb]{0.5,0.5,0.5}}%
    \else
      \def\colorrgb#1{\color{black}}%
      \def\colorgray#1{\color[gray]{#1}}%
      \expandafter\def\csname LTw\endcsname{\color{white}}%
      \expandafter\def\csname LTb\endcsname{\color{black}}%
      \expandafter\def\csname LTa\endcsname{\color{black}}%
      \expandafter\def\csname LT0\endcsname{\color{black}}%
      \expandafter\def\csname LT1\endcsname{\color{black}}%
      \expandafter\def\csname LT2\endcsname{\color{black}}%
      \expandafter\def\csname LT3\endcsname{\color{black}}%
      \expandafter\def\csname LT4\endcsname{\color{black}}%
      \expandafter\def\csname LT5\endcsname{\color{black}}%
      \expandafter\def\csname LT6\endcsname{\color{black}}%
      \expandafter\def\csname LT7\endcsname{\color{black}}%
      \expandafter\def\csname LT8\endcsname{\color{black}}%
    \fi
  \fi
    \setlength{\unitlength}{0.0500bp}%
    \ifx\gptboxheight\undefined%
      \newlength{\gptboxheight}%
      \newlength{\gptboxwidth}%
      \newsavebox{\gptboxtext}%
    \fi%
    \setlength{\fboxrule}{0.5pt}%
    \setlength{\fboxsep}{1pt}%
\begin{picture}(8502.00,3968.00)%
    \gplgaddtomacro\gplbacktext{%
      \csname LTb\endcsname%
      \put(594,440){\makebox(0,0)[r]{\strut{}$0$}}%
      \put(594,737){\makebox(0,0)[r]{\strut{}$0.1$}}%
      \put(594,1033){\makebox(0,0)[r]{\strut{}$0.2$}}%
      \put(594,1330){\makebox(0,0)[r]{\strut{}$0.3$}}%
      \put(594,1627){\makebox(0,0)[r]{\strut{}$0.4$}}%
      \put(594,1923){\makebox(0,0)[r]{\strut{}$0.5$}}%
      \put(594,2220){\makebox(0,0)[r]{\strut{}$0.6$}}%
      \put(594,2516){\makebox(0,0)[r]{\strut{}$0.7$}}%
      \put(594,2813){\makebox(0,0)[r]{\strut{}$0.8$}}%
      \put(594,3110){\makebox(0,0)[r]{\strut{}$0.9$}}%
      \put(594,3406){\makebox(0,0)[r]{\strut{}$1$}}%
      \put(594,3703){\makebox(0,0)[r]{\strut{}$1.1$}}%
      \put(789,220){\makebox(0,0){\strut{}$0$}}%
      \put(2453,220){\makebox(0,0){\strut{}$4000$}}%
      \put(4117,220){\makebox(0,0){\strut{}$8000$}}%
      \put(5781,220){\makebox(0,0){\strut{}$12000$}}%
      \put(7445,220){\makebox(0,0){\strut{}$16000$}}%
      \put(7577,440){\makebox(0,0)[l]{\strut{}$0$}}%
      \put(7577,737){\makebox(0,0)[l]{\strut{}$0.1$}}%
      \put(7577,1033){\makebox(0,0)[l]{\strut{}$0.2$}}%
      \put(7577,1330){\makebox(0,0)[l]{\strut{}$0.3$}}%
      \put(7577,1627){\makebox(0,0)[l]{\strut{}$0.4$}}%
      \put(7577,1923){\makebox(0,0)[l]{\strut{}$0.5$}}%
      \put(7577,2220){\makebox(0,0)[l]{\strut{}$0.6$}}%
      \put(7577,2516){\makebox(0,0)[l]{\strut{}$0.7$}}%
      \put(7577,2813){\makebox(0,0)[l]{\strut{}$0.8$}}%
      \put(7577,3110){\makebox(0,0)[l]{\strut{}$0.9$}}%
      \put(7577,3406){\makebox(0,0)[l]{\strut{}$1$}}%
      \put(7577,3703){\makebox(0,0)[l]{\strut{}$1.1$}}%
    }%
    \gplgaddtomacro\gplfronttext{%
      \csname LTb\endcsname%
      \put(5861,1273){\makebox(0,0)[l]{\strut{}quadratic}}%
      \csname LTb\endcsname%
      \put(5861,1053){\makebox(0,0)[l]{\strut{}biquadratic}}%
      \csname LTb\endcsname%
      \put(5861,833){\makebox(0,0)[l]{\strut{}degree 6}}%
      \csname LTb\endcsname%
      \put(5861,613){\makebox(0,0)[l]{\strut{}degree 21}}%
    }%
    \gplbacktext
    \put(0,0){\includegraphics{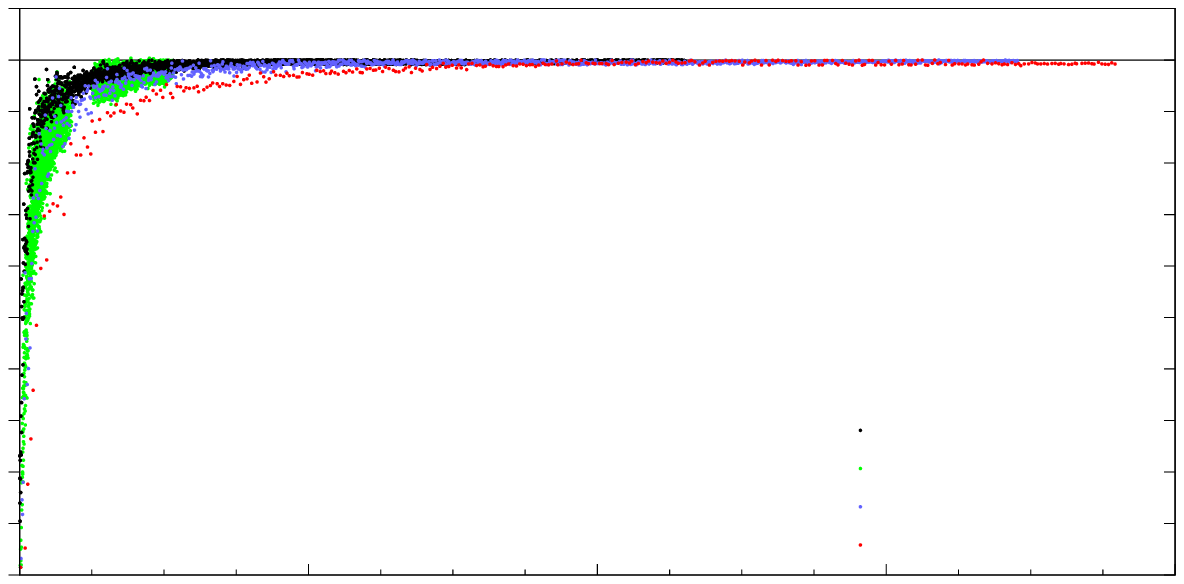}}%
    \gplfronttext
  \end{picture}%
\endgroup
\\
\caption{$\frac{T_1(\K)}{\lDKsq}$ for some fields; in abscissa $\lDK$.}%
\label{fig4}%
\end{fixedfig}

\subsection{A simplified algorithm}
Since the expression of $F_{\e}$ as given in Remark~\ref{rem:Fe} is simpler for $a=L/2$, we can implement a
variant of the algorithm of Belabas, Diaz y Diaz and Friedman with that weight. We have for $x\geq 0$,
\[
F_{\e}(x) = \Big(\big(x-2+2e^{-x}\sqrt{T}\big)\chi_{[0,L/2)}(x)+(L-x)\chi_{[L/2,L]}(x)\Big)e^{x/2}\sqrt{T}
\]
so that~\eqref{theoeq} becomes
\begin{multline*}
\sum_{\N\pG^m<\sqrt{T}}\Big(m\log\N\pG-2+2\frac{\sqrt{T}}{\N\pG^m}\Big)\log\N\pG
+ \sum_{\sqrt{T}\leq \N\pG^m<T}(L-m\log\N\pG)\log\N\pG                             \\
>
\big(\sqrt{T}-1\big)(\lDK - (\g + \log 8\pi)\nK)
+ \frac{\Iint(F_{\e})}{2\sqrt{T}} \nK
- \frac{\Jint(F_{\e})}{2\sqrt{T}} r_1
\end{multline*}
where
\begin{align*}
\frac{\Iint(F_{\e})}{2\sqrt{T}}
  &= \big(\sqrt{T}-1\big)\log\Big(\frac{4}{1-T^{-1/2}}\Big)-\frac{L^2}{8}+\frac{L}{2}
   - \frac{\pi^2}{12}-\dilog(-T^{-1/2}) \\
\frac{\Jint(F_{\e})}{2\sqrt{T}}
  &= \big(\sqrt{T}+1\big)\log\Big(\frac{2}{1+T^{-1/2}}\Big)+\frac{L^2}{8}-\frac{L}{2}
   - \frac{\pi^2}{24}-\dilog(-T^{-1/2})+\frac{1}{2}\dilog(-T^{-1}).
\end{align*}
Applying Theorem~\ref{theo:Phie} with $T_0=\sqrt{T}$, we can see that the result $T_2(\K)$ of this
algorithm satisfies
\[
\sqrt{T_2(\K)}\leq 2\lDK + 2\llDK - (\g+\log 2\pi)\nK + 1 + 2\log 2 + c\nK\frac{\llDK}{\lDK}
\]
for some absolute constant $c$. This means that the asymptotical expansion is nearly optimal: the first
term that changes with respect to Theorem~\ref{theo:Teasynt} is the constant term which increases from $2$
to $1+2\log 2\simeq 2.38$. For small discriminants this algorithm is sometimes worse than~\BDyDF and
always significantly worse than~\Bound, given in Subsections~\ref{subsec:algo}--\ref{subsec:algoend}. For
larger discriminants, it gives only sightly bigger results than~\Bound but is always faster. Numerically,
in our experiments with the above mentioned fields, $T_2(\K)\geq T(\K)$ if $\lDK\leq 48$ (resp. $142$,
$83$, $162$) for $\nK=2$ (resp. $4$, $6$, $21$) for a total of $401$ fields out the $12648$ tested.
\section{A final comment}
We have proved that
\[
(1+o(1))\frac{\lDK}{\nK}\leq \sqrt{\TeK}\leq (2+o(1))\lDK.
\]
We have three reasons to believe that the ``true'' behavior is
\[
\sqrt{\TeK}\sim \lDK
\]
as $\DK\to\infty$, for fixed $\nK$.

The first one is computational. We have observed that $\sqrt{T_1(\K)}/\lDK$ seems to tend to $1$ (from
below, see Figure~\ref{fig4}) for several series of pure fields and one series of biquadratic fields. We
also tested some restricted cases with $\Phie^+$ and $a=L$ and the $\Phi^+$ of the form indicated in
Corollary~\ref{coro:boundT1cst} with $a=\log 4$ and $b=0$: in all cases we observed the same phenomenon,
which is that the experimental result seems to be half the one we can prove.

The second one is related to the upper bound. The function $\widehat F(t)=4(\Ree[e^{-iLt/2}
\widehat{\Phi^+}(t)])^2$ in~\eqref{eq:1c} is estimated with
$\widehat\digamma(t)=4|\widehat{\Phi^+}(t)|^2$ in~\eqref{eq:2c}. This step removes the quick oscillations
of $e^{-iLt/2}$ and allows the conclusion of the argument, but it overestimates the contribution of this
object, which would be of this size only in the case where the $\g$'s were placed very close to the
maxima of $\cos^2(Lt)$ and which would be smaller by a factor $1/2$, in mean, for uniformly spaced zeros.
Unfortunately, the actual information we have for the vertical distribution of zeros is not strong enough
to distinguish between these two behaviors.

The third one is related to the lower bound in Proposition~\ref{prop:3.9}. For its computation we have
considered each prime integer as totally split. This allows an explicit bound, but it should be corrected
by a factor $1/\nK$, because this is the density of the totally split primes, and the contribution of the
other primes should be negligible.

All three reasons indicate that $1\cdot \lDK$ should be the correct asymptotic. We are unable to prove it,
though.
%


\def\PARI{PARI}\def\megrez{megrez}
\providecommand{\bysame}{\leavevmode\hbox to3em{\hrulefill}\thinspace}
\providecommand{\MR}{\relax\ifhmode\unskip\space\fi MR }
\providecommand{\MRhref}[2]{%
  \href{http://www.ams.org/mathscinet-getitem?mr=#1}{#2}
}
\providecommand{\href}[2]{#2}

\end{document}

